\newtheorem{theorem}{Theorem}[section]
\newtheorem{proposition}[theorem]{Proposition}
\newtheorem{corollary}[theorem]{Corollary}
\newtheorem{remark}[theorem]{Remark}
\newtheorem{question}[theorem]{Question}
\theoremstyle{definition}
\numberwithin{equation}{section}
\begin{document}
\title{Mabuchi and Aubin-Yau functionals over complex manifolds}
\author{Yi Li}
\address{Department of Mathematics, Harvard University, Cambridge, MA 02138}

\email{yili@math.harvard.edu}

\begin{abstract} In the previous papers \cite{L1, L2} the author
constructed Mabuchi and Aubin-Yau functionals over any complex
surfaces and three-folds, respectively. Using the method in
\cite{L2}, we construct those functionals over any complex manifolds
of the complex dimension bigger than or equal to $2$.
\end{abstract}
\maketitle

\tableofcontents
\section{Introduction}
Mabuchi and Aubin-Yau functionals play a crucial role in studying
K\"ahler-Einstein metrics and constant scalar curvatures(see
\cite{PS}). How to generalize these functionals from K\"ahler
geometry to complex geometry is an interesting problem:

\begin{question} \label{q1.1} Can we define Mabuchi and Aubin-Yau functionals
over compact complex manifolds so that these functionals coincide
with the original definitions and satisfy the same basic properties?
\end{question}
In \cite{L1,L2}, the author answered this question in the complex dimension two and
three, respectively, and proved similar results in the K\"ahler setting. By
carefully checking and using a similar method in \cite{L2}, we can
construct those functionals in higher dimension cases. So, now, we
give an affirmative answer to Question \ref{q1.1}.

\subsection{Mabuchi and Aubin-Yau functionals on K\"ahler manifolds}
In this subsection we review Mabuchi and Aubin-Yau functionals on
K\"ahler manifolds, and describe some basic properties of these
functionals which also hold in any complex manifolds. Let $(X,\omega)$ be a compact
K\"ahler manifold of the complex dimension $n$. Then the volume
\begin{equation}
V_{\omega}:=\int_{X}\omega^{n}\label{1.1}
\end{equation}
depends only on the K\"ahler class of $\omega$. Let
$\mathcal{P}^{{\rm Kahler}}_{\omega}$ denote the space of K\"ahler
potentials and define the Mabuchi functional, for any smooth
functions $\varphi',\varphi''\in\mathcal{P}^{{\rm
Kahler}}_{\omega}$, by
\begin{equation}
\mathcal{L}^{{\rm M},{\rm
Kahler}}_{\omega}(\varphi',\varphi''):=\frac{1}{V_{\omega}}\int^{1}_{0}\int_{X}\dot{\varphi}_{t}
\omega^{n}_{\varphi_{t}}dt\label{1.2}
\end{equation}
where $\varphi_{t}$ is any smooth path in $\mathcal{P}^{{\rm
Kahler}}_{\omega}$ from $\varphi'$ to $\varphi''$. Mabuchi \cite{M}
showed that (\ref{1.2}) is well-defined.

Using (\ref{1.2}) we can define Aubin-Yau functionals, for any
smooth function $\varphi\in\mathcal{P}^{{\rm Kahler}}_{\omega}$, as
follows:
\begin{eqnarray}
\mathcal{I}^{{\rm AY},{\rm
Kahler}}_{\omega}(\varphi)&=&\frac{1}{V_{\omega}}\int_{X}\varphi(\omega^{n}-\omega^{n}_{\varphi}),
\label{1.3}\\
\mathcal{J}^{{\rm AY},{\rm
Kahler}}_{\omega}(\varphi)&=&-\mathcal{L}^{{\rm M},{\rm
Kahler}}_{\omega}(0,\varphi)+\frac{1}{V_{\omega}}\int_{X}\varphi\omega^{n}\label{1.4}.
\end{eqnarray}
So Aubin-Yau functionals are also well-defined. The basic and often
useful inequalities, for any smooth function
$\varphi\in\mathcal{P}^{{\rm Kahler}}_{\omega}$, are
\begin{eqnarray}
\frac{n}{n+1}\mathcal{I}^{{\rm AY},{\rm
Kahler}}_{\omega}(\varphi)-\mathcal{J}^{{\rm AY},{\rm
Kahler}}_{\omega}(\varphi)&\geq&0, \label{1.5}\\
(n+1)\mathcal{J}^{{\rm AY},{\rm
Kahler}}_{\omega}(\varphi)-\mathcal{I}^{{\rm AY},{\rm
Kahler}}_{\omega}(\varphi)&\geq&0. \label{1.6}
\end{eqnarray}
An important consequence is that we will use the inequalities
(\ref{1.5}) and (\ref{1.6}) to determine the extra terms on the
definitions of $\mathcal{I}^{{\rm AY}}_{\omega}(\varphi)$ and
$\mathcal{J}^{{\rm AY}}_{\omega}(\varphi)$, which are Aubin-Yau functionals
over complex manifolds.

However, if $\omega$ is not closed, then the above definitions
(\ref{1.2}), (\ref{1.3}), and (\ref{1.4}) do not make any sense.
Hence we should add some extra terms on the definitions of those
functionals; these extra terms should involve $\partial\omega$ and
$\overline{\partial}\omega$, but, the essential question is to find
the structure of the extra terms. Roughly speaking, $\mathcal{I}^{{\rm AY}}_{\omega}(\varphi)$
and $\mathcal{J}^{{\rm AY}}_{\omega}(\varphi)$ can be written as
\begin{eqnarray*}
\mathcal{I}^{{\rm AY}}_{\omega}(\varphi)&=&\mathcal{I}^{{\rm AY},{\rm Kahler}}_{\omega}(\varphi)
+\text{terms involving} \ \partial\omega, \overline{\partial}\varphi
+\text{terms invloving} \ \overline{\partial}\omega, \partial\varphi, \\
\mathcal{J}^{{\rm AY}}_{\omega}(\varphi)&=&\mathcal{J}^{{\rm AY},{\rm Kahler}}_{\omega}(\varphi)
+\text{terms involving} \ \partial\omega,\overline{\partial}\varphi+
\text{terms involving} \ \overline{\partial}\omega, \partial\varphi.
\end{eqnarray*}
In the following sections, we will explicitly determine the extra terms.

Throughout the rest part of this paper, we denote by $(X,g)$ a
compact complex manifold of the complex dimension $n\geq2$, and
$\omega$ be the associated real $(1,1)$-form. Let
\begin{equation}
\mathcal{P}_{\omega}:=\left\{\varphi\in C^{\infty}(X)_{\mathbb{R}}\Big|
\omega_{\varphi}:=\omega+\sqrt{-1}\partial\overline{\partial}\varphi>0\right\}\label{1.7}
\end{equation}
be the space of all real-valued smooth functions on $X$ whose
associated real $(1,1)$-forms are positive.

\subsection{Mabuchi and Aubin-Yau functionals on complex
surfaces}
In this subsection we recall the main result in \cite{L1}. Let $(X,g)$ be a compact
complex manifold of the complex dimension $2$ and $\omega$ be its
associated real $(1,1)$-form. For any
$\varphi',\varphi''\in\mathcal{P}_{\omega}$, we define
\begin{eqnarray}
\mathcal{L}^{{\rm
M}}_{\omega}(\varphi',\varphi'')&:=&\frac{1}{V_{\omega}}\int^{1}_{0}\int_{X}\dot{\varphi}_{t}\cdot
\omega^{2}_{\varphi_{t}}dt \label{1.8}\\
&-&\frac{1}{V_{\omega}}\int^{1}_{0}\int_{X}\sqrt{-1}\partial\omega\wedge(\overline{\partial}\dot{\varphi}_{t}\cdot\varphi_{t})dt
\nonumber\\
&+&\frac{1}{V_{\omega}}\int^{1}_{0}\int_{X}\sqrt{-1}\overline{\partial}\omega\wedge(\partial\dot{\varphi}_{t}\cdot\varphi_{t})dt,\nonumber
\end{eqnarray}
where $\{\varphi_{t}\}_{0\leq t\leq1}$ is any smooth path in
$\mathcal{P}_{\omega}$ from $\varphi'$ to $\varphi''$. Then in
\cite{L1} we showed that the functional (\ref{1.8}) is independent
of the choice of the smooth path $\{\varphi_{t}\}_{0\leq t\leq1}$.
If we set
\begin{equation*}
\mathcal{L}^{{\rm M}}_{\omega}(\varphi):=\mathcal{L}^{{\rm
M}}_{\omega}(0,\varphi),
\end{equation*}
then we have an explicit formula \cite{L1} of $\mathcal{L}^{{\rm M}}_{\omega}(\varphi)$:
\begin{eqnarray}
\mathcal{L}^{{\rm
M}}_{\omega}(\varphi)&=&\frac{1}{3V_{\omega}}\int_{X}\varphi(\omega^{2}+\omega\wedge\omega_{\varphi}+\omega^{2}_{\varphi})
\label{1.9}
\\
&+&\frac{1}{2V_{\omega}}\int_{X}\varphi\left(-\sqrt{-1}\partial\omega\wedge\overline{\partial}\varphi
+\sqrt{-1}\overline{\partial}\omega\wedge\partial\varphi\right).\nonumber
\end{eqnarray}

Now Aubin-Yau functionals are defined by
\begin{eqnarray}
\mathcal{I}^{{\rm
AY}}_{\omega}(\varphi)&:=&\frac{1}{V_{\omega}}\int_{X}\varphi(\omega^{2}-\omega^{2}_{\varphi})
\label{1.10} \\
&-&\frac{1}{V_{\omega}}\int_{X}\varphi\cdot\sqrt{-1}\partial\omega\wedge\overline{\partial}\varphi
+\frac{1}{V_{\omega}}\int_{X}\varphi\cdot\sqrt{-1}\overline{\partial}\omega\wedge\partial\varphi,\nonumber
\\
\mathcal{J}^{{\rm AY}}_{\omega}(\varphi)&:=&-\mathcal{L}^{{\rm
M}}_{\omega}(\varphi)+\frac{1}{V_{\omega}}\int_{X}\varphi\omega^{2}
\label{1.11} \\
&-&\frac{1}{V_{\omega}}\int_{X}\varphi\cdot\sqrt{-1}\partial\omega\wedge\overline{\partial}\varphi
+\frac{1}{V_{\omega}}\int_{X}\varphi\cdot\sqrt{-1}\overline{\partial}\omega\wedge\partial\varphi.
\nonumber
\end{eqnarray}
Moreover they also satisfy the inequalities (\ref{1.5}) and (\ref{1.6}); that is
\begin{eqnarray}
\frac{2}{3}\mathcal{I}^{{\rm
AY}}_{\omega}(\varphi)-\mathcal{J}^{{\rm
AY}}_{\omega}(\varphi)&\geq&0, \label{1.12}\\
3\mathcal{J}^{{\rm
AY}}_{\omega}-\mathcal{I}^{{\rm
AY}}_{\omega}(\varphi)&\geq&0.\nonumber
\end{eqnarray}
\subsection{Mabuchi and Aubin-Yau functionals on complex
three-folds}
The functionals over complex three-folds are very different with
these over complex surfaces. For any
$\varphi',\varphi''\in\mathcal{P}_{\omega}$, we define
\begin{eqnarray}
\mathcal{L}^{{\rm
M}}_{\omega}(\varphi',\varphi'')&:=&\frac{1}{V_{\omega}}\int^{1}_{0}
\int_{X}\dot{\varphi}_{t}\omega^{3}_{\varphi_{t}}dt
\label{1.13}\\
&-&\frac{3}{V_{\omega}}\int^{1}_{0}\int_{X}\sqrt{-1}\partial\omega
\wedge\omega_{\varphi_{t}}
\wedge(\overline{\partial}\dot{\varphi}_{t}\cdot\varphi_{t})dt
\nonumber\\
&+&\frac{3}{V_{\omega}}\int^{1}_{0}\int_{X}\sqrt{-1}\overline{\partial}\omega
\wedge\omega_{\varphi_{t}}\wedge(\partial\dot{\varphi}_{t}\cdot\varphi_{t})dt\nonumber\\
&-&\frac{1}{V_{\omega}}\int^{1}_{0}\int_{X}\partial\varphi_{t}\wedge
\overline{\partial}\varphi_{t}\wedge\partial\omega\wedge\overline{\partial}\dot{\varphi}_{t}\nonumber
\\
&-&\frac{1}{V_{\omega}}\int^{1}_{0}\int_{X}\overline{\partial}\varphi_{t}\wedge\partial\varphi_{t}\wedge\overline{\partial}
\omega\wedge\partial\dot{\varphi}_{t},\nonumber
\end{eqnarray}
where $\{\varphi_{t}\}_{0\leq t\leq1}$ is any smooth path in
$\mathcal{P}_{\omega}$ from $\varphi'$ to $\varphi''$. In \cite{L2},
we proved that (\ref{1.13}) is well-defined.

For any $\varphi\in\mathcal{P}_{\omega}$ we also set
$\mathcal{L}^{{\rm M}}_{\omega}(\varphi):=\mathcal{L}^{{\rm
M}}_{\omega}(0,\varphi)$. If we chose $\varphi_{t}=t\cdot\varphi$, then we have an explicit formula \cite{L2} of
$\mathcal{L}^{{\rm M}}_{\omega}(\varphi)$:
\begin{eqnarray}
\mathcal{L}^{{\rm M}}_{\omega}(\varphi)&=&
\frac{1}{4V_{\omega}}\sum^{3}_{i=0}\int_{X}
\varphi\omega^{i}_{\varphi}\wedge\omega^{3-i}
\nonumber\\
&-&\sum^{1}_{i=0}\frac{i+1}{2V_{\omega}}\int_{X}\varphi\omega^{i}_{\varphi}
\wedge\omega^{1-i}\wedge\sqrt{-1}\partial\omega\wedge\overline{\partial}\varphi
\label{1.15}\\
&+&\sum^{1}_{i=0}\frac{i+1}{2V_{\omega}}\int_{X}\varphi\omega^{i}_{\varphi}
\wedge\omega^{1-i}\wedge\sqrt{-1}\overline{\partial}\omega\wedge\partial\varphi.\nonumber
\end{eqnarray}

Now we define Aubin-Yau functionals $\mathcal{I}^{{\rm
AY}}_{\omega}, \mathcal{J}^{{\rm AY}}_{\omega}$ for any compact
complex three-fold $(X,\omega)$:
\begin{eqnarray}
& & \mathcal{I}^{{\rm AY}}_{\omega}(\varphi) \ \ := \ \
\frac{1}{V_{\omega}}\int_{X}\varphi(\omega^{3}-\omega^{3}_{\varphi})
\label{1.15}\\
&-&\frac{3}{2V_{\omega}}\int_{X}\varphi\omega_{\varphi}
\wedge\sqrt{-1}\partial\omega\wedge\overline{\partial}\varphi-
\frac{3}{2V_{\omega}}\int_{X}\varphi\omega\wedge
\sqrt{-1}\partial\omega\wedge \overline{\partial}\varphi\nonumber
\\
&+&\frac{3}{2V_{\omega}}\int_{X}\varphi\omega_{\varphi}
\wedge\sqrt{-1}\overline{\partial}\omega\wedge\partial\varphi
+\frac{3}{2V_{\omega}}\int_{X}\varphi\omega\wedge\sqrt{-1}\overline{\partial}\omega\wedge\partial\varphi
\nonumber,\\
& & \mathcal{J}^{{\rm AY}}_{\omega}(\varphi) \ \ := \ \
-\mathcal{L}^{{\rm
M}}_{\omega}(\varphi)+\frac{1}{V_{\omega}}\int_{X}\varphi\omega^{3}
\label{1.16}\\
&-&\frac{3}{2V_{\omega}}\int_{X}\varphi\omega_{\varphi}
\wedge\sqrt{-1}\partial\omega\wedge\overline{\partial}\varphi
-\frac{3}{2V_{\omega}}\int_{X}\varphi\wedge\sqrt{-1}\partial
\omega\wedge\overline{\partial}\varphi\nonumber\\
&+&\frac{3}{2V_{\omega}}\int_{X}\varphi\omega_{\varphi}
\wedge\sqrt{-1}\overline{\partial}\omega\wedge\partial\varphi
+\frac{3}{2V_{\omega}}\int_{X}\varphi\omega\wedge\sqrt{-1}\overline{\partial}\omega\wedge\partial\varphi.\nonumber
\end{eqnarray}
Similarly, we have
\begin{eqnarray}
\frac{3}{4}\mathcal{I}^{{\rm
AY}}_{\omega}(\varphi)-\mathcal{J}^{{\rm
AY}}_{\omega}(\varphi)&\geq&0, \label{1.17}\\
\ \ \  4\mathcal{J}^{{\rm
AY}}_{\omega}(\varphi)-\mathcal{I}^{{\rm
AY}}_{\omega}(\varphi)&\geq&0.\nonumber
\end{eqnarray}

\subsection{Mabuchi and Aubin-Yau functionals on
complex manifolds}
In this subsection we assume that the complex dimension $n$ of the
compact complex manifold $(X,\omega)$ is bigger than or equal to
$3$. For $\varphi',\varphi''\in\mathcal{P}_{\omega}$, define
\begin{eqnarray}
&&\mathcal{L}^{{\rm
M}}_{\omega}(\varphi',\varphi'') \ \ := \ \ \frac{1}{V_{\omega}}\int^{1}_{0}\int_{X}\dot{\varphi}_{t}\omega^{n}_{\varphi_{t}}dt\label{1.18}\\
&-&\frac{1}{V_{\omega}}\int^{1}_{0}\int_{X}\frac{n(n-1)\sqrt{-1}}{2}\partial\omega\wedge\omega^{n-2}_{\varphi_{t}}\wedge(\overline{\partial}\dot{\varphi}_{t}\cdot\varphi_{t})dt
\nonumber\\
&+&\frac{1}{V_{\omega}}\int^{1}_{0}\int_{X}\frac{n(n-1)\sqrt{-1}}{2}\overline{\partial}\omega\wedge\omega^{n-2}_{\varphi_{t}}\wedge(\partial\dot{\varphi}_{t}\cdot\varphi_{t})dt
\nonumber\\
&+&\sum^{n-2}_{i=1}\frac{1}{V_{\omega}}\int^{1}_{0}\int_{X}(-1)^{i}\binom{n}{i+2}\partial\varphi_{t}\wedge\partial\omega\wedge\overline{\partial}\dot{\varphi}_{t}\wedge\overline{\partial}
\varphi_{t}\wedge\omega^{n-i-2}_{\varphi_{t}}\wedge(\sqrt{-1}\partial\overline{\partial}\varphi_{t})^{i-1}\nonumber\\
&+&\sum^{n-2}_{i=1}\frac{1}{V_{\omega}}\int^{1}_{0}\int_{X}(-1)^{i}\binom{n}{i+2}\overline{\partial}\varphi_{t}\wedge\overline{\partial}
\omega\wedge\partial\dot{\varphi}_{t}\wedge
\partial\varphi_{t}\wedge\omega^{n-i-2}_{\varphi_{t}}\wedge(\sqrt{-1}\partial\overline{\partial}\varphi_{t})^{i-1},\nonumber
\end{eqnarray}
where $\{\varphi_{t}\}_{0\leq t\leq 1}$ is any smooth path in
$\mathcal{P}_{\omega}$ from $\varphi'$ to $\varphi''$. In Section 2,
we prove

\begin{theorem} \label{thm1.2} For any $n\geq3$, the functional (\ref{1.18})
is independent of the choice of the smooth path
$\{\varphi_{t}\}_{0\leq t\leq1}$ in $\mathcal{P}_{\omega}$ from
$\varphi'$ to $\varphi''$.
\end{theorem}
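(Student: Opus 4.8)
The plan is to recognize the right-hand side of (\ref{1.18}) as the line integral of a $1$-form $\alpha$ on the space $\mathcal{P}_{\omega}$ and to prove that this integral depends only on the endpoints by showing $\alpha$ is closed. At a point $\psi\in\mathcal{P}_{\omega}$ and a tangent vector $v\in T_{\psi}\mathcal{P}_{\omega}=C^{\infty}(X)_{\mathbb{R}}$, one reads off $\alpha_{\psi}(v)$ from (\ref{1.18}) by replacing $\varphi_{t}$ by $\psi$ and $\dot{\varphi}_{t}$ by $v$; then $\mathcal{L}^{{\rm M}}_{\omega}(\varphi',\varphi'')=\int_{\gamma}\alpha$ along the path $\gamma=\{\varphi_{t}\}$. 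Since $\mathcal{P}_{\omega}$ is an open and \emph{convex} subset of $C^{\infty}(X)_{\mathbb{R}}$ (if $\omega_{\varphi_{0}},\omega_{\varphi_{1}}>0$ then $\omega_{(1-\lambda)\varphi_{0}+\lambda\varphi_{1}}=(1-\lambda)\omega_{\varphi_{0}}+\lambda\omega_{\varphi_{1}}>0$), it is contractible, so a closed $1$-form is automatically path-independent.

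In practice I would verify closedness directly through a second-variation computation, which is the concrete content of the closedness of $\alpha$ and is the method of \cite{L2}. Take an arbitrary smooth two-parameter family $\{\varphi_{t,s}\}$ with fixed endpoints $\varphi_{0,s}=\varphi'$ and $\varphi_{1,s}=\varphi''$ for all $s$; by convexity any two paths with common endpoints are joined by such a homotopy, so it suffices to show $\frac{d}{ds}\mathcal{L}^{{\rm M}}_{\omega}(\varphi',\varphi'')=0$. Writing $w:=\partial_{s}\varphi_{t,s}$, which vanishes at $t=0,1$, and using the commuting relations $\partial_{s}\dot{\varphi}_{t}=\partial_{t}w$, $\partial_{s}\omega_{\varphi_{t}}=\sqrt{-1}\partial\overline{\partial}w$, $\partial_{s}(\partial\varphi_{t})=\partial w$, and $\partial_{s}(\overline{\partial}\varphi_{t})=\overline{\partial}w$, I would differentiate the full integrand $E$ of (\ref{1.18}). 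The goal is the pointwise-in-$t$ identity
\[
\partial_{s}\left(\int_{X}E\right)=\partial_{t}\left(\int_{X}G\right),
\]
where the $(n,n)$-form $G$ is proportional to $w,\partial w,\overline{\partial}w$; integrating over $t\in[0,1]$ then gives $\frac{d}{ds}\mathcal{L}^{{\rm M}}_{\omega}=\frac{1}{V_{\omega}}\big[\int_{X}G\big]_{t=0}^{t=1}=0$ because $w$ vanishes at the endpoints.

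The only analytic tools needed are Stokes' theorem on the closed manifold $X$ (so $\int_{X}d(\cdot)=0$ and integration by parts carries no boundary terms), the Leibniz rule for $\partial,\overline{\partial}$, and the nilpotency $\partial^{2}=\overline{\partial}^{2}=0$; a bidegree count shows every summand of $E$ is of type $(n,n)$, so all wedge products are legitimate top forms. After differentiating, the terms split into a leading part from $\frac{1}{V_{\omega}}\int_{X}v\,\omega_{\psi}^{n}$ together with the two $\partial\omega$ and $\overline{\partial}\omega$ families with coefficient $\frac{n(n-1)}{2}=\binom{n}{2}$, and the $i$-sum. Moving $\partial\overline{\partial}$ off the varied factors $\sqrt{-1}\partial\overline{\partial}w$ onto the remaining factors, and using $\partial_{s}\dot{\varphi}_{t}=\partial_{t}w$ to convert $s$-variations of the $\dot\varphi_t$-terms into $t$-derivatives, is what produces the exact form $G$.

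The main obstacle is the combinatorial bookkeeping in the sum over $i$. The role of the coefficients $(-1)^{i}\binom{n}{i+2}$ and of the two mismatched powers $\omega_{\varphi_{t}}^{n-i-2}\wedge(\sqrt{-1}\partial\overline{\partial}\varphi_{t})^{i-1}$ is precisely to force a telescoping cancellation: writing $\sqrt{-1}\partial\overline{\partial}\varphi_{t}=\omega_{\varphi_{t}}-\omega$ and distributing, the variation of the $i$-th term (which generates a $\sqrt{-1}\partial\overline{\partial}w$ either from the $\omega_{\varphi_{t}}^{n-i-2}$ factor, with multiplicity $n-i-2$, or from the $(\sqrt{-1}\partial\overline{\partial}\varphi_{t})^{i-1}$ factor, with multiplicity $i-1$) must match, after one integration by parts, the corresponding contribution of the $(i+1)$-st term, via a Pascal-type identity among the $\binom{n}{i+2}$. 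At the extreme ends the $i=1$ term links to the $\binom{n}{2}$-corrections and the $i=n-2$ term (where $\omega_{\varphi_{t}}^{0}=1$) caps the sum, while the $\binom{n}{2}$-corrections in turn link to the variation of the leading $\int_{X}v\,\omega_{\psi}^{n}$ term. This is exactly the cancellation carried out in \cite{L2} for $n=3$; the only genuinely new feature is propagating the extra factors of $\omega_{\varphi_{t}}$ and $\sqrt{-1}\partial\overline{\partial}\varphi_{t}$ through the same telescope, and I expect the signs arising from permuting the $(1,0)$- and $(0,1)$-forms $\partial\varphi_{t},\overline{\partial}\dot{\varphi}_{t},\overline{\partial}\varphi_{t}$ past one another to be the most error-prone part of the verification.
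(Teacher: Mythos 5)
Your framing --- read (\ref{1.18}) as the line integral of a $1$-form on $\mathcal{P}_{\omega}$, note that $\mathcal{P}_{\omega}$ is convex, and reduce path-independence to a mixed-partials (closedness) identity for a two-parameter family with fixed endpoints --- is sound, and in substance it is the paper's own approach: the identity you call $\partial_{s}\left(\int_{X}E\right)=\partial_{t}\left(\int_{X}G\right)$ is exactly what the paper establishes in the form $I^{0}+\sum^{n-2}_{k=0}\left(I^{2k+1}+I^{2k+2}\right)=0$, equation (\ref{2.114}), and the computations leading to it are carried out for a general two-parameter family $\psi(s,t)$ (the special choice $\psi=s\cdot\varphi_{t}$ enters only in the final wrapper, where the paper uses Green's theorem on $[0,1]^{2}$ instead of your fixed-endpoint homotopy --- an immaterial difference).

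The genuine gap is that the decisive step, the cancellation itself, is only predicted rather than proved, and the mechanism you predict is not the one that actually works. You expect a term-by-term telescope in which the variation of the $i$-th summand of (\ref{1.18}) cancels against the $(i+1)$-st after one integration by parts ``via a Pascal-type identity.'' In fact, differentiating the $i$-th summand produces, besides terms of the next type $A_{i+1}+B_{i+1}$, a remainder $H_{i}$ (the paper's (\ref{2.94})) which cannot be reduced to determined quantities by itself: it involves $\overline{\partial}\partial\omega$- and $\partial\omega\wedge\overline{\partial}\omega$-terms that only cancel after adding the complex conjugate, through the nontrivial identity $H_{i}+\overline{H_{i}}=\frac{1}{n-i-1}\left(A_{i}+B_{i}+\overline{A_{i}}+\overline{B_{i}}\right)$ of (\ref{2.98}). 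This is why the summands must be handled in conjugate pairs $I^{2i+1}+I^{2i+2}$, and even then one obtains not a telescope but a linear recursion $c_{i+1}=-\frac{i+2}{n-(i+1)}c_{i}+J_{i}$, equation (\ref{2.105}), with terminal condition $c_{n-1}\equiv0$; solving that recursion is what forces the coefficients to be exactly $\frac{n(n-1)}{2}$ and $(-1)^{k}\binom{n}{k+2}$, equations (\ref{2.109})--(\ref{2.113}). Since the entire content of Theorem \ref{thm1.2} is that these particular coefficients make the $1$-form closed, a proof must carry out this (or an equivalent) verification; as written, your argument cannot distinguish the correct coefficients from incorrect ones --- for instance $\binom{n}{i+2}$ without the alternating sign --- so what you have is a correct strategy, not yet a proof.
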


As a consequence of Theorem \ref{thm1.2}, by taking
$\varphi_{t}=t\cdot\varphi$, we have, for any
$\varphi\in\mathcal{P}_{\omega}$,
\begin{eqnarray}
\mathcal{L}^{{\rm M}}_{\omega}(\varphi)&:=&\mathcal{L}^{{\rm
M}}_{\omega}(0,\varphi) \ \ = \ \
\frac{1}{V_{\omega}}\sum^{n}_{i=0}\int_{X}\frac{1}{n+1}\varphi\omega^{i}_{\varphi}\wedge\omega^{n-i}
\nonumber\\
&-&\sum^{n-2}_{i=0}\frac{i+1}{2V_{\omega}}\int_{X}\varphi\omega^{i}_{\varphi}\wedge\omega^{n-2-i}\wedge\sqrt{-1}\partial\omega\wedge\overline{\partial}\varphi
\label{1.19}\\
&+&\sum^{n-2}_{i=0}\frac{i+1}{2V_{\omega}}\int_{X}\varphi\omega^{i}_{\varphi}\wedge\omega^{n-2-i}\wedge\sqrt{-1}\overline{\partial}\omega\wedge\partial\varphi.\nonumber
\end{eqnarray}

As before, we define Aubin-Yau functionals for any complex manifolds
by
\begin{eqnarray}
\mathcal{I}^{{\rm AY}}_{\omega}(\varphi) &:=&
\frac{1}{V_{\omega}}\int_{X}\varphi(\omega^{n}-\omega^{n}_{\varphi})\label{1.20}-\frac{n}{2V_{\omega}}\sum^{n-2}_{i=0}
\int_{X}\varphi\omega^{i}_{\varphi}\wedge\omega^{n-2-i}
\wedge\sqrt{-1}\partial\omega\wedge\overline{\partial}\varphi\nonumber\\
&+&\frac{n}{2V_{\omega}}\sum^{n-2}_{i=0}\int_{X}\varphi\omega^{i}_{\varphi}\wedge\omega^{n-2-i}
\wedge\sqrt{-1}\overline{\partial}\omega\wedge\partial\varphi\label{1.20}\\
&=&\frac{1}{V_{\omega}}\sum^{n-1}_{i=0}\int_{X}\sqrt{-1}\partial\varphi\wedge\overline{\partial}\varphi\wedge\omega^{i}_{\varphi}\wedge\omega^{n-1-i},\nonumber\\
\mathcal{J}^{{\rm AY}}_{\omega}(\varphi)&:=&-\mathcal{L}^{{\rm
M}}_{\omega}(\varphi)+\frac{1}{V_{\omega}}\int_{X}\varphi\omega^{n}
-\frac{n}{2V_{\omega}}\sum^{n-2}_{i=0}\int_{X}\varphi\omega^{i}_{\varphi}\wedge\omega^{n-2-i}
\wedge\sqrt{-1}\partial\omega\wedge\overline{\partial}\varphi\nonumber\\
&+&\frac{n}{2V_{\omega}}\sum^{n-2}_{i=0}\int_{X}\varphi\omega^{i}_{\varphi}\wedge\omega^{n-2-i}
\wedge\sqrt{-1}\overline{\partial}\omega\wedge\partial\varphi\label{1.21}\\
&=&\frac{1}{V_{\omega}}\sum^{n-1}_{i=0}\int_{X}\frac{n-i}{n+1}\sqrt{-1}\partial\varphi\wedge\overline{\partial}\varphi\wedge\omega^{i}_{\varphi}\wedge\omega^{n-1-i}.\nonumber
\end{eqnarray}

In Section 3, we shall show the following

\begin{theorem} \label{thm1.3} For any $\varphi\in\mathcal{P}_{\omega}$, one has
\begin{eqnarray}
\frac{n}{n+1}\mathcal{I}^{{\rm
AY}}_{\omega}(\varphi)-\mathcal{J}^{{\rm
AY}}_{\omega}(\varphi)&\geq&0, \label{1.22}\\
(n+1)\mathcal{J}^{{\rm AY}}_{\omega}(\varphi)-\mathcal{I}^{{\rm
AY}}_{\omega}(\varphi)&\geq&0.\label{1.23}
\end{eqnarray}
In particular, $\mathcal{I}^{{\rm AY}}_{\omega}(\varphi),\mathcal{J}^{{\rm AY}}_{\omega}(\varphi)$ are nonnegative, and 
\begin{eqnarray}
\frac{1}{n+1}\mathcal{I}^{{\rm
AY}}_{\omega}(\varphi)&\leq&\mathcal{J}^{{\rm AY}}_{\omega}(\varphi)
\ \ \leq \ \ \frac{n}{n+1}\mathcal{I}^{{\rm AY}}_{\omega}(\varphi),
\label{1.24}\\
\frac{n+1}{n}\mathcal{J}^{{\rm
AY}}_{\omega}(\varphi)&\leq&\mathcal{I}^{{\rm AY}}_{\omega}(\varphi)
\ \ \leq \ \ (n+1)\mathcal{J}^{{\rm AY}}_{\omega}(\varphi), \label{1.25}\\
\frac{1}{n}\mathcal{J}^{{\rm AY}}_{\omega}(\varphi) \ \ \leq \ \
\frac{1}{n+1}\mathcal{J}^{{\rm AY}}_{\omega}(\varphi)&\leq&
\mathcal{I}^{{\rm
AY}}_{\omega}(\varphi)-\mathcal{J}^{{\rm AY}}_{\omega}(\varphi)\label{1.26}\\
&\leq&\frac{n}{n+1}\mathcal{I}^{{\rm AY}}_{\omega}(\varphi) \ \ \leq
\ \ n\mathcal{J}^{{\rm AY}}_{\omega}(\varphi).\nonumber
\end{eqnarray}
\end{theorem}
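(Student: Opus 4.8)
The plan is to reduce the entire theorem to a single positivity statement about the summands appearing in the reduced expressions for $\mathcal{I}^{\rm AY}_\omega$ and $\mathcal{J}^{\rm AY}_\omega$. For $0\le i\le n-1$ set
\[
a_i:=\frac{1}{V_\omega}\int_X \sqrt{-1}\,\partial\varphi\wedge\overline{\partial}\varphi\wedge\omega_\varphi^i\wedge\omega^{n-1-i}.
\]
By the second equalities in (\ref{1.20}) and (\ref{1.21}), which I take as given, we have $\mathcal{I}^{\rm AY}_\omega(\varphi)=\sum_{i=0}^{n-1}a_i$ and $\mathcal{J}^{\rm AY}_\omega(\varphi)=\sum_{i=0}^{n-1}\frac{n-i}{n+1}\,a_i$. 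Thus both functionals are \emph{fixed nonnegative linear combinations of the same quantities} $a_i$, and every inequality in the statement becomes a comparison between two such combinations.

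The crucial step, and the one I expect to carry the real content, is to show $a_i\ge 0$ for every $i$. Since $\varphi$ is real, writing $\partial\varphi=\sum_j \varphi_j\,dz^j$ gives $\sqrt{-1}\,\partial\varphi\wedge\overline{\partial}\varphi=\sqrt{-1}\sum_{j,k}\varphi_j\overline{\varphi_k}\,dz^j\wedge d\bar z^k$, which is the real $(1,1)$-form attached to the rank-one positive semidefinite Hermitian matrix $(\varphi_j\overline{\varphi_k})$; hence it is a (weakly) positive $(1,1)$-form at every point. The forms $\omega$ and $\omega_\varphi$ are positive $(1,1)$-forms by hypothesis and by the definition of $\mathcal{P}_\omega$ in (\ref{1.7}). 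Invoking the standard fact that a wedge product of positive real $(1,1)$-forms is a nonnegative $(n,n)$-form, the integrand defining $a_i$ is pointwise a nonnegative multiple of the volume form, so $a_i\ge 0$. In particular $\mathcal{I}^{\rm AY}_\omega(\varphi)\ge 0$ and $\mathcal{J}^{\rm AY}_\omega(\varphi)\ge 0$ follow immediately.

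With $a_i\ge 0$ in hand, the two governing inequalities are pure coefficient bookkeeping. For (\ref{1.22}),
\[
\frac{n}{n+1}\mathcal{I}^{\rm AY}_\omega(\varphi)-\mathcal{J}^{\rm AY}_\omega(\varphi)=\sum_{i=0}^{n-1}\Big(\frac{n}{n+1}-\frac{n-i}{n+1}\Big)a_i=\sum_{i=0}^{n-1}\frac{i}{n+1}\,a_i\ge 0,
\]
and for (\ref{1.23}),
\[
(n+1)\mathcal{J}^{\rm AY}_\omega(\varphi)-\mathcal{I}^{\rm AY}_\omega(\varphi)=\sum_{i=0}^{n-1}\big((n-i)-1\big)a_i=\sum_{i=0}^{n-1}(n-1-i)\,a_i\ge 0,
\]
each coefficient being nonnegative for $0\le i\le n-1$.

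Finally, the chains (\ref{1.24})--(\ref{1.26}) are elementary consequences of (\ref{1.22}), (\ref{1.23}) and the nonnegativity of the two functionals. Dividing (\ref{1.22}) by $\tfrac{n}{n+1}$ and (\ref{1.23}) by $n+1$ produces $\tfrac{1}{n+1}\mathcal{I}^{\rm AY}_\omega(\varphi)\le\mathcal{J}^{\rm AY}_\omega(\varphi)\le\tfrac{n}{n+1}\mathcal{I}^{\rm AY}_\omega(\varphi)$ and $\tfrac{n+1}{n}\mathcal{J}^{\rm AY}_\omega(\varphi)\le\mathcal{I}^{\rm AY}_\omega(\varphi)\le(n+1)\mathcal{J}^{\rm AY}_\omega(\varphi)$, which are exactly (\ref{1.24}) and (\ref{1.25}); the bounds in (\ref{1.26}) then follow by combining these with $\mathcal{J}^{\rm AY}_\omega(\varphi)\ge 0$, for instance the estimate $\mathcal{I}^{\rm AY}_\omega(\varphi)-\mathcal{J}^{\rm AY}_\omega(\varphi)\ge\mathcal{I}^{\rm AY}_\omega(\varphi)-\tfrac{n}{n+1}\mathcal{I}^{\rm AY}_\omega(\varphi)=\tfrac{1}{n+1}\mathcal{I}^{\rm AY}_\omega(\varphi)\ge\tfrac{1}{n}\mathcal{J}^{\rm AY}_\omega(\varphi)$. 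The only genuine obstacle is the positivity $a_i\ge 0$; once the reduced forms in (\ref{1.20}) and (\ref{1.21}) are available, no further analysis is needed, so the substance of the argument lies in those reduced expressions (obtained by integration by parts using $\omega_\varphi-\omega=\sqrt{-1}\,\partial\overline{\partial}\varphi$) rather than in the inequalities themselves.
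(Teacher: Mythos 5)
Your proof is correct and is in substance the paper's own argument: the paper's proof (Theorem 3.1) rests on the identities (3.29)--(3.30), which express $\frac{n}{n+1}\mathcal{I}^{\rm AY}_{\omega}(\varphi)-\mathcal{J}^{\rm AY}_{\omega}(\varphi)$ and $(n+1)\mathcal{J}^{\rm AY}_{\omega}(\varphi)-\mathcal{I}^{\rm AY}_{\omega}(\varphi)$ as exactly the nonnegative combinations $\sum_{i}\frac{i}{n+1}a_i$ and $\sum_{i}(n-1-i)a_i$ of your integrals $a_i$; you recover these same identities by coefficient arithmetic from the closed forms in (1.20)--(1.21), which is legitimate since those closed forms are displayed before the theorem and established in the paper (they are (3.43)--(3.44), obtained by solving (3.29)--(3.30) for the two functionals, with the supporting computations in Appendices A--C). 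Two remarks. First, you make explicit the one analytic ingredient the paper leaves implicit, namely that $\sqrt{-1}\,\partial\varphi\wedge\overline{\partial}\varphi\wedge\omega_{\varphi}^{i}\wedge\omega^{n-1-i}$ is a pointwise nonnegative $(n,n)$-form; this is where the hypotheses $\omega>0$ and $\omega_{\varphi}>0$ enter, and your rank-one semipositivity argument is the right justification. Second, the first inequality of (1.26) as printed, $\frac{1}{n}\mathcal{J}^{\rm AY}_{\omega}(\varphi)\leq\frac{1}{n+1}\mathcal{J}^{\rm AY}_{\omega}(\varphi)$, is false whenever $\mathcal{J}^{\rm AY}_{\omega}(\varphi)>0$; it is evidently a typo for $\frac{1}{n}\mathcal{J}^{\rm AY}_{\omega}(\varphi)\leq\frac{1}{n+1}\mathcal{I}^{\rm AY}_{\omega}(\varphi)$, and your chain $\mathcal{I}^{\rm AY}_{\omega}(\varphi)-\mathcal{J}^{\rm AY}_{\omega}(\varphi)\geq\frac{1}{n+1}\mathcal{I}^{\rm AY}_{\omega}(\varphi)\geq\frac{1}{n}\mathcal{J}^{\rm AY}_{\omega}(\varphi)$ proves precisely this corrected version, which is also all that the paper's own inequalities support.
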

\subsection{Further questions}
Up to now we consider functionals over compact complex manifolds without boundary, and
we hope that the similar constructions can be achieved for compact complex
manifolds with boundary.

\begin{question} \label{q1.4} Can we define Mabuchi and Aubin-Yau functionals over compact complex
manifolds with boundary so that these functionals coincide with the original definitions
and satisfy the same basic properties?
\end{question}

There are other functionals, for example, Mabuchi $\mathcal{K}^{{\rm M}}_{\omega}$ \cite{M} functional, Chen-Tian functionals  \cite{CT}, etc. We can ask the following

\begin{question} \label{q1.5} Can we define the analogy Mabuchi $\mathcal{K}^{{\rm M}}_{\omega}$ and
Chen-Tian functionals over complex manifolds with(out) boundary?
\end{question}

In the future, we will study those two questions.\\

{\bf Acknowledgements.} The author would like to thank Valentino
Tosatti who read this note and pointed out a serious mistake in the
first version.

\section{Mabuchi $\mathcal{L}^{{\rm M}}_{\omega}$ functional on
complex manifolds}
In this section we assume that $(X,\omega)$ is a compact complex
manifold of the complex dimension $n\geq3$. For any two complex
forms $\alpha$ and $\beta$, we frequently use the following
formulas: if $|\alpha|+|\beta|=2n-1$, then
\begin{eqnarray}
\int_{X}\alpha\wedge\partial\beta&=&(-1)^{|\beta|}\int_{X}\partial\alpha\wedge\beta
\ \ = \ \ -(-1)^{|\alpha|}\int_{X}\partial\alpha\wedge\beta, \label{2.1}\\
\int_{X}\alpha\wedge\overline{\partial}\beta&=&(-1)^{|\beta|}\int_{X}\overline{\partial}\alpha\wedge\beta
\ \ = \ \
-(-1)^{|\alpha|}\int_{X}\overline{\partial}\alpha\wedge\beta.
\label{2.2}
\end{eqnarray}
Another useful formula is
\begin{equation}
\alpha\wedge\alpha=0, \ \ \ \text{if} \ |\alpha| \ \text{is odd}.
\label{2.3}
\end{equation}
By the definition of operators $\partial$ and $\overline{\partial}$,
one has
\begin{equation}
\partial\overline{\partial}+\overline{\partial}\partial=0.\label{2.4}
\end{equation}
Hence the complex conjugate of the operator $\sqrt{-1}\partial\overline{\partial}$ is itself.

\subsection{The main idea}
Similarly in \cite{L2}, we consider $\psi(s,t)=s\cdot\varphi_{t}$
and we can show that (see (\ref{2.49}))
\begin{eqnarray}
\frac{2I^{0}}{n(n-1)\sqrt{-1}}&=&\frac{I^{1}}{a_{1}}-\frac{I^{2}}{a_{2}}+c_{1},
\label{2.5}\\
\frac{I^{3}}{a_{3}}+\frac{I^{4}}{a_{4}}&=&\frac{3}{-(n-2)\sqrt{-1}}c_{1}+c_{2},\label{2.6}
\end{eqnarray}
where $I^{i}$ are functionals which can be determined\footnote{A functional $\mathcal{I}$
is said to be determined if $d\Psi=\mathcal{I}\cdot dt\wedge ds$ for some $1$-form $\Psi$ on $[0,1]
\times[0,1]$.}, $c_{j}$ are
also functionals but may not be determined, and $a_{k}$ are nonzero
constants which can be determined later. We can use equation
(\ref{2.6}) to eliminate the undetermined term $c_{1}$, but there
arises another undetermined term $c_{2}$. Our strategy is to find a
determined expression for $c_{2}$. To achieve this we construct two
sequences $\{I^{2i+1}\}_{2\leq i\leq n-2}$, which can be determined,
and $\{c_{i}\}_{2\leq i\leq n-2}$, which may not be determined,
satisfying
\begin{equation}
\frac{I^{2i+1}}{a_{2i+1}}+\frac{I^{2i+2}}{a_{2i+2}}=\frac{i+2}{n-(i+1)}c_{i}+c_{i+1},
\ \ \ 2\leq i\leq n-2,\label{2.7}
\end{equation}
where $I^{2i+2}:=\overline{I^{2i+1}}$,
$a_{2i+2}:=\overline{a_{2i+1}}$ and $a_{2i+1}$ can be determined
later. By our construction we have $c_{n-1}\equiv0$ which gives us
the determined and explicit formula for $c_{2}$ in terms of
$I^{2i+1}$ and $a_{2i+1}$. More precisely, setting
\begin{equation}
J_{i}:=\frac{I^{2i+1}}{a_{2i+1}}+\frac{I^{2i+2}}{a_{2i+2}},
\label{2.8}
\end{equation}
yields
\begin{equation}
c_{i+1}=-\frac{i+2}{n-(i+1)}c_{i}+J_{i}, \ \ \ 2\leq i\leq n-2.
\label{2.9}
\end{equation}
By induction on $i$ we obtain
\begin{equation}
c_{i}=(-1)^{i-2}\frac{(i+1)!(n-i-1)!}{3!(n-3)!}c_{2}+\sum^{i-1}_{k=2}(-1)^{i-1-k}
\frac{(i+1)!(n-i-1)!}{(k+2)!(n-k-2)!}J_{k}.\label{2.10}
\end{equation}
Since $c_{n-1}\equiv0$ it follows that
\begin{eqnarray}
c_{2}&=&(-1)^{n-3}\frac{3!(n-3)!}{n!}\left[0-\sum^{n-2}_{k=2}(-1)^{n-2-k}\frac{n!}{(k+2)!(n-k-2)!}J_{k}\right]
\nonumber\\
&=&\sum^{n-2}_{k=2}(-1)^{k}\frac{3!(n-3)!}{(k+2)!(n-k-2)!}J_{k}, \ \
\ n\geq4.\label{2.11}
\end{eqnarray}

When $X$ is a three-fold, we knew that $c_{2}=0$ \cite{L2}, but this can be seen from
(\ref{2.11}) if we take $n=3$. Hence the formula (\ref{2.11}) holds for any $n\geq3$.

\subsection{The definitions of $c_{1}$ and $c_{2}$}
Firstly, we consider the "K\"ahler part" of Mabuchi functional. Let
\begin{equation}
\mathcal{L}^{0}_{\omega}(\varphi',\varphi''):=
\frac{1}{V_{\omega}}\int^{1}_{0}\int_{X}\dot{\varphi}_{t}
\omega^{n}_{\varphi_{t}}dt.\label{2.12}
\end{equation}
As in \cite{L1, L2}, we set $\psi(s,t):=s\cdot\varphi_{t}$, $0\leq t,s\leq 1$, and
consider the corresponding $1$-form on
$[0,1]\times[0,1]$,
\begin{equation}
\Psi^{0}=\left(\int_{X}\frac{\partial\psi}{\partial
s}\omega^{n}_{\psi}\right)ds+\left(\int_{X}\frac{\partial\psi}{\partial
t}\omega^{n}_{\psi}\right)dt.\label{2.13}
\end{equation}
Taking the differential on both sides implies
\begin{equation}
d\Psi^{0}=I^{0}\cdot dt\wedge ds\label{2.14}
\end{equation}
where
\begin{equation}
I^{0}:=\int_{X}\frac{\partial}{\partial
t}\left(\frac{\partial\psi}{\partial
s}\omega^{n}_{\psi}\right)-\int_{X}\frac{\partial}{\partial
s}\left(\frac{\partial\psi}{\partial t}\omega^{n}_{\psi}\right).\label{2.15}
\end{equation}
The explicit expression of $I^{0}$ can be determined as follows:
\begin{eqnarray*}
I^{0}&=&\int_{X}\left[\frac{\partial\psi}{\partial
s}n\omega^{n-1}_{\psi}\wedge\sqrt{-1}\partial\overline{\partial}\left(\frac{\partial\psi}{\partial
t}\right)-\frac{\partial\psi}{\partial
t}n\omega^{n-1}_{\psi}\wedge\sqrt{-1}\partial\overline{\partial}\left(\frac{\partial\psi}{\partial
s}\right)\right] \\
&=&\int_{X}n\frac{\partial\psi}{\partial
s}\omega^{n-1}_{\psi}\wedge\sqrt{-1}\partial\overline{\partial}\left(\frac{\partial\psi}{\partial
t}\right)-\int_{X}n\frac{\partial\psi}{\partial
t}\omega^{n-1}_{\psi}\wedge\sqrt{-1}\partial\overline{\partial}\left(\frac{\partial\psi}{\partial
s}\right) \\
&=&\int_{X}n\frac{\partial\psi}{\partial
s}\omega^{n-1}_{\psi}\wedge\sqrt{-1}\partial\overline{\partial}\left(\frac{\partial\psi}{\partial
t}\right)+\int_{X}n\frac{\partial\psi}{\partial
t}\omega^{n-1}_{\psi}\wedge\sqrt{-1}\overline{\partial}\partial\left(\frac{\partial\psi}{\partial
s}\right) \\
&=&-\int_{X}n\frac{\partial\psi}{\partial
s}\omega^{n-1}_{\psi}\wedge\sqrt{-1}\overline{\partial}\partial\left(\frac{\partial\psi}{\partial
t}\right)-\int_{X}n\frac{\partial\psi}{\partial
t}\omega^{n-1}_{\psi}\wedge\sqrt{-1}\partial\overline{\partial}\left(\frac{\partial\psi}{\partial
s}\right).
\end{eqnarray*}
Here we have two slightly different expressions of $I^{0}$, and in the following
we will use those expressions to find $c_{1}=A_{1}+B_{1}-(\overline{A_{1}}+\overline{B_{1}})$,
where $A_{1},B_{1},\overline{A_{1}}$ and $\overline{B_{1}}$ are determined later. This
technique will be frequently used in many places. Hence
\begin{eqnarray*}
I^{0}&=&\int_{X}-n\sqrt{-1}\partial\left(\frac{\partial\psi}{\partial
s}\omega^{n-1}_{\psi}\right)\wedge\overline{\partial}\left(\frac{\partial\psi}{\partial
t}\right)+\int_{X}-n\sqrt{-1}\overline{\partial}\left(\frac{\partial\psi}{\partial
t}\omega^{n-1}_{\psi}\right)\wedge\partial\left(\frac{\partial\psi}{\partial
s}\right) \\
&=&\int_{X}-n\sqrt{-1}\left[\partial\left(\frac{\partial\psi}{\partial
s}\right)\wedge\omega^{n-1}_{\psi}+(n-1)\frac{\partial\psi}{\partial
s}\omega^{n-2}_{\psi}\wedge\partial\omega\right]\wedge\overline{\partial}\left(\frac{\partial\psi}{\partial
t}\right) \\
&+&\int_{X}-n\sqrt{-1}\left[\overline{\partial}\left(\frac{\partial\psi}{\partial
t}\right)\wedge\omega^{n-1}_{\psi}+(n-1)\frac{\partial\psi}{\partial
t}\omega^{n-2}_{\psi}\wedge\overline{\partial}\omega\right]\wedge\partial\left(\frac{\partial\psi}{\partial
s}\right) \\
&=&\int_{X}-n(n-1)\sqrt{-1}\frac{\partial\psi}{\partial
s}\omega^{n-2}_{\psi}\wedge\partial\omega\wedge\overline{\partial}\left(\frac{\partial\psi}{\partial
t}\right) \\
&+&\int_{X}-n(n-1)\sqrt{-1}\frac{\partial\psi}{\partial
t}\omega^{n-2}_{\psi}\wedge\overline{\partial}\omega\wedge\partial\left(\frac{\partial\psi}{\partial
s}\right).
\end{eqnarray*}
Thus
\begin{eqnarray}
I^{0}&=&\int_{X}-n(n-1)\sqrt{-1}\frac{\partial\psi}{\partial
s}\omega^{n-2}_{\psi}\wedge\partial\omega\wedge\overline{\partial}\left(\frac{\partial\psi}{\partial
t}\right) \nonumber\\
&+&\int_{X}-n(n-1)\sqrt{-1}\frac{\partial\psi}{\partial
t}\omega^{n-2}_{\psi}\wedge\overline{\partial}\omega\wedge\partial\left(\frac{\partial\psi}{\partial
s}\right).\label{2.16}
\end{eqnarray}
Using another expression of $I^{0}$, or taking the complex conjugate on both sides
of (\ref{2.16}) since $I^{0}$ is real, one has
\begin{eqnarray}
I^{0}&=&\int_{X}n(n-1)\sqrt{-1}\frac{\partial\psi}{\partial s}\omega^{n-2}_{\psi}\wedge
\overline{\partial}\omega\wedge\partial\left(\frac{\partial\psi}{\partial t}\right)\nonumber \\
&+&\int_{X}n(n-1)\sqrt{-1}\frac{\partial\psi}{\partial t}\omega^{n-2}_{\psi}\wedge\partial\omega
\wedge\overline{\partial}\left(\frac{\partial\psi}{\partial s}\right).\label{2.17}
\end{eqnarray}
Hence, adding (\ref{2.17}) to (\ref{2.16}) and dividing by $n(n-1)\sqrt{-1}$
on both sides, we deduce
\begin{eqnarray}
\frac{2I^{0}}{n(n-1)\sqrt{-1}}&=&\int_{X}\overline{\partial}\left(\frac{\partial\psi}{\partial
t}\right)\frac{\partial\psi}{\partial
s}\wedge\omega^{n-2}_{\psi}\wedge\partial\omega+\int_{X}\partial\left(\frac{\partial\psi}{\partial
s}\right)\frac{\partial\psi}{\partial
t}\wedge\omega^{n-2}_{\psi}\wedge\overline{\partial}\omega \nonumber\\
&-&\int_{X}\overline{\partial}\left(\frac{\partial\psi}{\partial
s}\right)\frac{\partial\psi}{\partial
t}\wedge\omega^{n-2}_{\psi}\wedge\partial\omega-\int_{X}\partial\left(\frac{\partial\psi}{\partial
t}\right)\frac{\partial\psi}{\partial
s}\wedge\omega^{n-2}_{\psi}\wedge\overline{\partial}\omega. \label{2.18}
\end{eqnarray}
According to the expression (\ref{2.18}), we introduce two functionals
\begin{eqnarray}
\mathcal{L}^{1}_{\omega}(\varphi',\varphi'')&:=&\frac{1}{V_{\omega}}\int^{1}_{0}\int_{X}a_{1}\partial\omega\wedge\omega^{n-2}_{\varphi_{t}}\wedge(\overline{\partial}\dot{\varphi}_{t}\cdot\varphi_{t})dt,
\label{2.19}\\
\mathcal{L}^{2}_{\omega}(\varphi',\varphi'')&:=&\frac{1}{V_{\omega}}
\int^{1}_{0}\int_{X}a_{2}\overline{\partial}\omega\wedge
\omega^{n-2}_{\varphi_{t}}\wedge(\partial\dot{\varphi}_{t}\cdot\varphi_{t})dt.\label{2.20}
\end{eqnarray}
Here $a_{1},a_{2}$ are non-zero constants determined later and we require
$\overline{a_{1}}=a_{2}$. Satisfying this condition, $a_{1}$ and $a_{2}$
have lots of solutions. In the following we will see that we take only
two special cases: $a_{i}$ are purely complex numbers; $a_{i}$ are real numbers.
Consider the corresponding two $1$-forms on $[0,1]\times[0,1]$,
\begin{eqnarray}
\Psi^{1}&=&\left[\int_{X}a_{1}\partial\omega\wedge\omega^{n-2}_{\psi}\wedge\left(\overline{\partial}\left(\frac{\partial\psi}{\partial
s}\right)\cdot\psi\right)\right]ds\label{2.21} \\
&+&\left[\int_{X}a_{1}\partial\omega\wedge\omega^{n-2}_{\psi}\wedge\left(\overline{\partial}\left(\frac{\partial\psi}{\partial
t}\right)\cdot\psi\right)\right]dt, \nonumber\\
\Psi^{2}&=&\left[\int_{X}a_{2}\overline{\partial}\omega\wedge\omega^{n-2}_{\psi}\wedge\left(\partial\left(\frac{\partial\psi}{\partial
s}\right)\cdot\psi\right)\right]ds\label{2.22} \\
&+&\left[\int_{X}a_{2}\overline{\partial}\omega\wedge\omega^{n-2}_{\psi}\wedge\left(\partial\left(\frac{\partial\psi}{\partial
t}\right)\cdot\psi\right)\right]dt.\nonumber
\end{eqnarray}
Firstly, we compute the differential of $\Psi^{1}$, and the differential $d\Psi^{2}$ can be
easily written down only by taking the complex conjugate on both sides. Calculate
\begin{equation}
d\Psi^{1}=I^{1}\cdot dt\wedge ds \label{2.23}
\end{equation}
where
\begin{eqnarray}
I^{1}&=&\int_{X}a_{1}\frac{\partial}{\partial
t}\left[\partial\omega\wedge\omega^{n-2}_{\psi}\wedge\left(\overline{\partial}\left(\frac{\partial\psi}{\partial
s}\right)\cdot\psi\right)\right]\label{2.24} \\
&-&\int_{X}a_{1}\frac{\partial}{\partial
s}\left[\partial\omega\wedge\omega^{n-2}_{\psi}\wedge\left(\overline{\partial}\left(\frac{\partial\psi}{\partial
t}\right)\cdot\psi\right)\right].\nonumber
\end{eqnarray}
Dividing by $a_{1}$ on both sides of (\ref{2.24}), we have
\begin{eqnarray*}
\frac{I^{1}}{a_{1}}&=&\int_{X}-\frac{\partial}{\partial
t}\left[\left(\psi\cdot\overline{\partial}\left(\frac{\partial\psi}{\partial
s}\right)\right)\wedge\omega^{n-2}_{\psi}\wedge\partial\omega\right]
\\
&+&\int_{X}\frac{\partial}{\partial
s}\left[\left(\psi\cdot\overline{\partial}\left(\frac{\partial\psi}{\partial
t}\right)\right)\wedge\omega^{n-2}_{\psi}\wedge\partial\omega\right]
\\
&=&\int_{X}-\left[\frac{\partial\psi}{\partial
t}\cdot\overline{\partial}\left(\frac{\partial\psi}{\partial
s}\right)+\psi\cdot\overline{\partial}\left(\frac{\partial^{2}\psi}{\partial
t\partial
s}\right)\right]\wedge\omega^{n-2}_{\psi}\wedge\partial\omega \\
&+&\int_{X}\left[\frac{\partial\psi}{\partial
s}\cdot\overline{\partial}\left(\frac{\partial\psi}{\partial
t}\right)+\psi\cdot\overline{\partial}\left(\frac{\partial^{2}\psi}{\partial
s\partial
t}\right)\right]\wedge\omega^{n-2}_{\psi}\wedge\partial\omega \\
&+&\int_{X}\psi\cdot\overline{\partial}\left(\frac{\partial\psi}{\partial
s}\right)\wedge(n-2)\omega^{n-3}_{\psi}\wedge-\sqrt{-1}\partial\overline{\partial}\left(\frac{\partial\psi}{\partial
t}\right)\wedge\partial\omega \\
&+&\int_{X}\psi\cdot\overline{\partial}\left(\frac{\partial\psi}{\partial
t}\right)\wedge(n-2)\omega^{n-3}_{\psi}\wedge\sqrt{-1}\partial\overline{\partial}\left(\frac{\partial\psi}{\partial
s}\right)\wedge\partial\omega \\
&=&\int_{X}-\frac{\partial\psi}{\partial
t}\cdot\overline{\partial}\left(\frac{\partial\psi}{\partial
s}\right)\wedge\omega^{n-2}_{\psi}\wedge\partial\omega+\int_{X}\frac{\partial\psi}{\partial
s}\cdot\overline{\partial}\left(\frac{\partial\psi}{\partial
t}\right)\wedge\omega^{n-2}_{\psi}\wedge\partial\omega \\
&+&\int_{X}\psi\cdot\overline{\partial}\left(\frac{\partial\psi}{\partial
s}\right)\wedge(n-2)\omega^{n-3}_{\psi}\wedge-\sqrt{-1}\partial\overline{\partial}\left(\frac{\partial\psi}{\partial
t}\right)\wedge\partial\omega \\
&+&\int_{X}\psi\cdot\overline{\partial}\left(\frac{\partial\psi}{\partial
t}\right)\wedge(n-2)\omega^{n-3}_{\psi}\wedge\sqrt{-1}\partial\overline{\partial}\left(\frac{\partial\psi}{\partial
s}\right)\wedge\partial\omega.
\end{eqnarray*}
To simplify the notation, we set
\begin{eqnarray}
A_{1}&:=&\int_{X}\psi(n-2)\overline{\partial}\left(\frac{\partial\psi}{\partial
s}\right)\wedge\omega^{n-3}_{\psi}\wedge\partial\omega\wedge-\sqrt{-1}\partial\overline{\partial}\left(\frac{\partial\psi}{\partial
t}\right), \label{2.25}\\
B_{1}&:=&\int_{X}\psi(n-2)\overline{\partial}\left(\frac{\partial\psi}{\partial
t}\right)\wedge\omega^{n-3}_{\psi}\wedge\partial\omega\wedge\sqrt{-1}\partial\overline{\partial}\left(\frac{\partial\psi}{\partial
s}\right).\label{2.26}
\end{eqnarray}
Consequently, $I^{1}/a_{1}$ can be written as
\begin{eqnarray}
\frac{I^{1}}{a_{1}}&=&\int_{X}-\frac{\partial\psi}{\partial t}\cdot\overline{\partial}
\left(\frac{\partial\psi}{\partial s}\right)\wedge\omega^{n-2}_{\psi}\wedge\partial\omega
+\int_{X}\frac{\partial\psi}{\partial s}\cdot\overline{\partial}\left(\frac{\partial\psi}{\partial t}\right)
\wedge\omega^{n-2}_{\psi}\wedge\partial\omega\nonumber \\
&+&A_{1}+B_{1}.\label{2.27}
\end{eqnarray}
Similarly, we can define $I^{2}$ by
\begin{equation}
d\Psi^{2}=I^{2}\cdot dt\wedge ds, \label{2.28}
\end{equation}
where
\begin{eqnarray}
\frac{I^{2}}{a_{2}}&:=&\int_{X}-\frac{\partial\psi}{\partial
t}\cdot\partial\left(\frac{\partial\psi}{\partial
s}\right)\wedge\omega^{n-2}_{\psi}\wedge\overline{\partial}\omega+\int_{X}\frac{\partial\psi}{\partial
s}\cdot\partial\left(\frac{\partial\psi}{\partial
t}\right)\wedge\omega^{n-2}_{\psi}\wedge\overline{\partial}\omega \nonumber\\
&+&\overline{A_{1}}+\overline{B_{1}}.\label{2.29}
\end{eqnarray}
Consequently, from (\ref{2.18}), (\ref{2.27}) and (\ref{2.29}),
\begin{equation}
\frac{2I^{0}}{n(n-1)\sqrt{-1}}=\frac{I^{1}}{a_{1}}-\frac{I^{2}}{a_{2}}+(A_{1}+B_{1})
-(\overline{A_{1}}+\overline{B_{1}}).\label{2.30}
\end{equation}
By a direct computation and using (\ref{2.1}) and (\ref{2.3}), one can show that the sum $A_{1}+B_{1}$ has a nice form:
\begin{eqnarray*}
A_{1}&=&\int_{X}\sqrt{-1}\partial\left[(n-2)\psi\cdot\overline{\partial}\left(\frac{\partial\psi}{\partial
s}\right)\wedge\omega^{n-3}_{\psi}\wedge\partial\omega\right]\wedge\overline{\partial}\left(\frac{\partial\psi}{\partial
t}\right) \\
&=&\int_{X}\sqrt{-1}(n-2)\left[\partial\left(\psi\cdot\overline{\partial}\left(\frac{\partial\psi}{\partial
s}\right)\right)\wedge\omega^{n-3}_{\psi}\wedge\partial\omega\right]\wedge\overline{\partial}\left(\frac{\partial\psi}{\partial
t}\right)\\
&=&\int_{X}\sqrt{-1}(n-2)\left[\partial\psi\wedge\overline{\partial}\left(\frac{\partial\psi}{\partial
s}\right)+\psi\cdot\partial\overline{\partial}\left(\frac{\partial\psi}{\partial
s}\right)\right]\wedge\omega^{n-3}_{\psi}\wedge\partial\omega\wedge\overline{\partial}\left(\frac{\partial\psi}{\partial
t}\right) \\
&=&\int_{X}(n-2)\psi\cdot\sqrt{-1}\partial\overline{\partial}\left(\frac{\partial\psi}{\partial
s}\right)\wedge\omega^{n-3}_{\psi}\wedge\partial\omega\wedge\overline{\partial}\left(\frac{\partial\psi}{\partial
t}\right) \\
&+&\int_{X}(n-2)\sqrt{-1}\partial\psi\wedge\overline{\partial}\left(\frac{\partial\psi}{\partial
s}\right)\wedge\omega^{n-3}_{\psi}\wedge\partial\omega\wedge\overline{\partial}\left(\frac{\partial\psi}{\partial
t}\right) \\
&=&\int_{X}-(n-2)\psi\cdot\overline{\partial}\left(\frac{\partial\psi}{\partial
t}\right)\wedge\omega^{n-3}_{\psi}\wedge\partial\omega\wedge\sqrt{-1}\partial\overline{\partial}\left(\frac{\partial\psi}{\partial
s}\right) \\
&+&\int_{X}-(n-2)\sqrt{-1}\partial\psi\wedge\partial\omega\wedge\overline{\partial}\left(\frac{\partial\psi}{\partial
s}\right)\wedge\overline{\partial}\left(\frac{\partial\psi}{\partial
t}\right)\wedge\omega^{n-3}_{\psi} \\
&=&-B_{1}-\int_{X}(n-2)\sqrt{-1}\partial\psi\wedge\partial\omega\wedge\overline{\partial}\left(\frac{\partial\psi}{\partial
s}\right)\wedge\overline{\partial}\left(\frac{\partial\psi}{\partial
t}\right)\wedge\omega^{n-3}_{\psi}.
\end{eqnarray*}
Adding the term $B_{1}$ on both sides gives
\begin{equation}
A_{1}+B_{1}=\int_{X}-(n-2)\sqrt{-1}\partial\psi\wedge\partial\omega\wedge\overline{\partial}\left(\frac{\partial\psi}{\partial
s}\right)\wedge\overline{\partial}\left(\frac{\partial\psi}{\partial
t}\right)\wedge\omega^{n-3}_{\psi}. \label{2.31}
\end{equation}

According to (\ref{2.31}), we define
\begin{eqnarray}
\mathcal{L}^{3}_{\omega}(\varphi',\varphi'')&:=&\frac{1}{V_{\omega}}\int^{1}_{0}\int_{X}a_{3}\partial\varphi_{t}\wedge\partial\omega\wedge\overline{\partial}\dot{\varphi}_{t}\wedge
\overline{\partial}\varphi_{t}\wedge\omega^{n-3}_{\varphi_{t}},\label{2.32} \\
\mathcal{L}^{4}_{\omega}(\varphi',\varphi'')&:=&\frac{1}{V_{\omega}}\int^{1}_{0}\int_{X}a_{4}\overline{\partial}\varphi_{t}\wedge\overline{\partial}\omega
\wedge\partial\dot{\varphi}_{t}\wedge\partial\varphi_{t}\wedge\omega^{n-3}_{\varphi_{t}},\label{2.33}
\end{eqnarray}
where $a_{3},a_{4}$ are nonzero constants determined later and
we require $\overline{a_{3}}=a_{4}$. Consider
\begin{eqnarray}
\Psi^{3}&=&\left[\int_{X}a_{3}\partial\psi\wedge\partial\omega\wedge\overline{\partial}\left(\frac{\partial\psi}{\partial
s}\right)\wedge\overline{\partial}\psi\wedge\omega^{n-3}_{\psi}\right]ds
\label{2.34}\\
&+&\left[\int_{X}a_{3}\partial\psi\wedge\partial\omega\wedge\overline{\partial}\left(\frac{\partial\psi}{\partial
t}\right)\wedge\overline{\partial}\psi\wedge\omega^{n-3}_{\psi}\right]dt, \nonumber\\
\Psi^{4}&=&\left[\int_{X}a_{4}\overline{\partial}\psi\wedge\overline{\partial}\omega
\wedge\partial\left(\frac{\partial\psi}{\partial s}\right)\wedge\partial\psi\wedge
\omega^{n-3}_{\psi}\right]ds \label{2.35}\\
&+&\left[\int_{X}a_{4}\overline{\partial}\psi\wedge\overline{\partial}\omega
\wedge\partial\left(\frac{\partial\psi}{\partial t}\right)\wedge\partial\psi\wedge
\omega^{n-3}_{\psi}\right]dt.\nonumber
\end{eqnarray}
Calculate
\begin{equation}
d\Psi^{3}=I^{3}\cdot dt\wedge ds,\label{2.36}
\end{equation}
where
\begin{eqnarray}
I^{3}&:=&\int_{X}a_{3}\frac{\partial}{\partial
t}\left[\partial\psi\wedge\partial\omega\wedge\overline{\partial}\left(\frac{\partial\psi}{\partial
s}\right)\wedge\overline{\partial}\psi\wedge\omega^{n-3}_{\psi}\right]
\label{2.37}\\
&-&\int_{X}a_{3}\frac{\partial}{\partial
s}\left[\partial\psi\wedge\partial\omega\wedge\overline{\partial}\left(\frac{\partial\psi}{\partial
t}\right)\wedge\overline{\partial}\psi\wedge\omega^{n-3}_{\psi}\right].\nonumber
\end{eqnarray}
Also, we can calculate
\begin{equation}
d\Psi^{4}=I^{4}\cdot dt\wedge ds,
\end{equation}
where
\begin{eqnarray}
I^{4}&:=&\int_{X}a_{4}\frac{\partial}{\partial t}\left[\overline{\partial}\psi\wedge
\overline{\partial}\omega\wedge\partial\left(\frac{\partial\psi}{\partial s}\right)\wedge\partial\psi
\wedge\omega^{n-3}_{\psi}\right] \label{2.38} \\
&-&\int_{X}a_{4}\frac{\partial}{\partial s}\left[\overline{\partial}\psi\wedge\overline{\partial}
\omega\wedge\partial\left(\frac{\partial\psi}{\partial t}\right)\wedge
\partial\psi\wedge\omega^{n-3}_{\psi}\right].\nonumber
\end{eqnarray}
Hence
\begin{eqnarray*}
\frac{I^{3}}{a_{3}}&=&\int_{X}\left[\partial\left(\frac{\partial\psi}{\partial
t}\right)\wedge\partial\omega\wedge\overline{\partial}\left(\frac{\partial\psi}{\partial
s}\right)\wedge\overline{\partial}\psi\wedge\omega^{n-3}_{\psi}\right.\\
&+&\partial\psi\wedge\partial\omega\wedge\overline{\partial}\left(\frac{\partial^{2}\psi}{\partial
t\partial
s}\right)\wedge\overline{\partial}\psi\wedge\omega^{n-3}_{\psi}+\partial\psi\wedge\partial\omega\wedge\overline{\partial}\left(\frac{\partial\psi}{\partial
s}\right)
\wedge\overline{\partial}\left(\frac{\partial\psi}{\partial
t}\right)\wedge\omega^{n-3}_{\psi} \\
&+&\left.\partial\psi\wedge\partial\omega\wedge\overline{\partial}\left(\frac{\partial\psi}{\partial
s}\right)\wedge\overline{\partial}\psi\wedge(n-3)\omega^{n-4}_{\psi}\wedge\sqrt{-1}\partial\overline{\partial}\left(\frac{\partial\psi}{\partial
t}\right)\right] \\
&-&\int_{X}\left[\partial\left(\frac{\partial\psi}{\partial
s}\right)\wedge\partial\omega\wedge\overline{\partial}\left(\frac{\partial\psi}{\partial
t}\right)\wedge\overline{\partial}\psi\wedge\omega^{n-3}_{\psi}\right.
\\
&+&\partial\psi\wedge\partial\omega\wedge\overline{\partial}\left(\frac{\partial^{2}\psi}{\partial
s\partial
t}\right)\wedge\overline{\partial}\psi\wedge\omega^{n-3}_{\psi}+\partial\psi\wedge\partial\omega\wedge\overline{\partial}\left(\frac{\partial\psi}{\partial
t}\right)\wedge\overline{\partial}\left(\frac{\partial\psi}{\partial
s}\right)\wedge\omega^{n-3}_{\psi} \\
&+&\left.\partial\psi\wedge\partial\omega\wedge\overline{\partial}\left(\frac{\partial\psi}{\partial
t}\right)\wedge\overline{\partial}\psi\wedge(n-3)\omega^{n-4}_{\psi}\wedge\sqrt{-1}\partial\overline{\partial}\left(\frac{\partial\psi}{\partial
s}\right)\right].
\end{eqnarray*}
The second term and the sixth term cancel with each other, and the third term and the
seventh term are the same, so we have
\begin{eqnarray*}
\frac{I^{3}}{a_{3}}&=&\int_{X}-\partial\left(\frac{\partial\psi}{\partial
t}\right)\wedge\overline{\partial}\left(\frac{\partial\psi}{\partial
s}\right)\wedge\partial\omega\wedge\overline{\partial}\psi\wedge\omega^{n-3}_{\psi}
\\
&+&\int_{X}\partial\left(\frac{\partial\psi}{\partial
s}\right)\wedge\overline{\partial}\left(\frac{\partial\psi}{\partial
t}\right)\wedge\partial\omega\wedge\overline{\partial}\psi\wedge\omega^{n-3}_{\psi}
\\
&+&2\int_{X}\partial\psi\wedge\partial\omega\wedge\overline{\partial}\left(\frac{\partial\psi}{\partial
s}\right)\wedge\overline{\partial}\left(\frac{\partial\psi}{\partial
t}\right)\wedge\omega^{n-3}_{\psi} \\
&+&\int_{X}(n-3)\partial\psi\wedge\overline{\partial}\psi\wedge\partial\omega\wedge\overline{\partial}\left(\frac{\partial\psi}{\partial
s}\right)\wedge\omega^{n-4}_{\psi}\wedge\sqrt{-1}\partial\overline{\partial}\left(\frac{\partial\psi}{\partial
t}\right) \\
&-&\int_{X}(n-3)\partial\psi\wedge\overline{\partial}\psi\wedge\partial\omega\wedge\overline{\partial}\left(\frac{\partial\psi}{\partial
t}\right)\wedge\omega^{n-4}_{\psi}\wedge\sqrt{-1}\partial\overline{\partial}\left(\frac{\partial\psi}{\partial
s}\right) \\
&=&H_{1}+\frac{2}{-(n-2)\sqrt{-1}}(A_{1}+B_{1})+A_{2}+B_{2},
\end{eqnarray*}
where
\begin{eqnarray}
H_{1}&:=&\int_{X}-\partial\left(\frac{\partial\psi}{\partial
t}\right)\wedge\overline{\partial}\left(\frac{\partial\psi}{\partial
s}\right)\wedge\partial\omega\wedge\overline{\partial}\psi\wedge\omega^{n-3}_{\psi}
\label{2.40}\\
&+&\int_{X}\partial\left(\frac{\partial\psi}{\partial
s}\right)\wedge\overline{\partial}\left(\frac{\partial\psi}{\partial
t}\right)\wedge\partial\omega\wedge\overline{\partial}\psi\wedge\omega^{n-3}_{\psi}
\nonumber\\
A_{2}&:=&\int_{X}(n-3)\partial\psi\wedge\overline{\partial}\psi\wedge\partial\omega\wedge\overline{\partial}\left(\frac{\partial\psi}{\partial
s}\right)\label{2.41}\\
&&\wedge\omega^{n-4}_{\psi}\wedge\sqrt{-1}\partial\overline{\partial}\left(\frac{\partial\psi}{\partial
t}\right) \nonumber\\
B_{2}&:=&-\int_{X}(n-3)\partial\psi\wedge\overline{\partial}\psi\wedge\partial\omega\wedge\overline{\partial}\left(\frac{\partial\psi}{\partial
t}\right)\label{2.42}\\
&&\wedge\omega^{n-4}_{\psi}\wedge\sqrt{-1}\partial\overline{\partial}\left(\frac{\partial\psi}{\partial
s}\right).\nonumber
\end{eqnarray}
Similarly,
\begin{equation}
\frac{I^{4}}{a_{4}}=\overline{H_{1}}+\frac{2}{(n-2)\sqrt{-1}
}(\overline{A_{1}}+\overline{B_{1}})+\overline{A_{2}}+\overline{B_{2}}.\label{2.43}
\end{equation}

The hard part is to find some suitable expression of $H_{1}$. In the following
we will see that $H_{1}+\overline{H_{1}}$ has a nice form which contains only
$A_{1},B_{1}, \overline{A_{1}}$, and $\overline{B_{1}}$.

Now we compute $H_{1}$, using (\ref{2.3}) and (\ref{2.4}):
\begin{eqnarray*}
H_{1}&=&\int_{X}\partial\left[\overline{\partial}\left(\frac{\partial\psi}{\partial
s}\right)\wedge\partial\omega\wedge\overline{\partial}\psi\wedge\omega^{n-3}_{\psi}\right]\frac{\partial\psi}{\partial
t}\\
&+&\int_{X}\overline{\partial}\left[\partial\left(\frac{\partial\psi}{\partial
s}\right)\wedge\partial\omega\wedge\overline{\partial}\psi\wedge\omega^{n-3}_{\psi}\right]\frac{\partial\psi}{\partial
t}\\
&=&\int_{X}\frac{\partial\psi}{\partial
t}\left[\partial\overline{\partial}\left(\frac{\partial\psi}{\partial
s}\right)\wedge\partial\omega\wedge\overline{\partial}\psi\wedge\omega^{n-3}_{\psi}-\overline{\partial}\left(\frac{\partial\psi}{\partial
s}\right)\wedge\partial\left(\partial\omega\wedge\overline{\partial}\psi\wedge\omega^{n-3}_{\psi}\right)\right]
\\
&+&\int_{X}\frac{\partial\psi}{\partial
t}\left[\overline{\partial}\partial\left(\frac{\partial\psi}{\partial
s}\right)\wedge\partial\omega\wedge\overline{\partial}\psi\wedge\omega^{n-3}_{\psi}-\partial\left(\frac{\partial\psi}{\partial
s}\right)\wedge\overline{\partial}\left(\partial\omega\wedge\overline{\partial}\psi\wedge\omega^{n-3}_{\psi}\right)\right]
\\
&=&\int_{X}\frac{\partial\psi}{\partial
t}\cdot\overline{\partial}\left(\frac{\partial\psi}{\partial
s}\right)\wedge\partial\omega\wedge\left(\partial
\overline{\partial}\psi\wedge\omega^{n-3}_{\psi}
-\overline{\partial}\psi\wedge(n-3)\omega^{n-4}_{\psi}\wedge\partial\omega\right)
\\
&+&\int_{X}-\frac{\partial\psi}{\partial
t}\cdot\partial\left(\frac{\partial\psi}{\partial
s}\right)\wedge\overline{\partial}\partial\omega\wedge\overline{\partial}\psi\wedge\omega^{n-3}_{\psi}\\
&+&\int_{X}-\frac{\partial\psi}{\partial
t}\cdot\partial\left(\frac{\partial\psi}{\partial
s}\right)\wedge\partial\omega\wedge\overline{\partial}\psi\wedge(n-3)\omega^{n-4}_{\psi}\wedge\overline{\partial}\omega
\\
&=&\int_{X}\overline{\partial}\left(\frac{\partial\psi}{\partial
t}\right)\wedge\overline{\partial}\left(\frac{\partial\psi}{\partial
s}\right)\wedge\partial\omega\wedge\omega^{n-3}_{\psi}\wedge\partial\psi
\\
&+&\int_{X}\frac{\partial\psi}{\partial
t}\cdot-\overline{\partial}\left(\frac{\partial\psi}{\partial
s}\right)\wedge\left[\overline{\partial}\partial\omega\wedge\omega^{n-3}_{\psi}
-\partial\omega\wedge(n-3)\omega^{n-4}_{\psi}\wedge\overline{\partial}\omega\right]\wedge\partial\psi
\\
&-&\int_{X}\frac{\partial\psi}{\partial
t}\cdot\partial\left(\frac{\partial\psi}{\partial
s}\right)\wedge\overline{\partial}\partial\omega\wedge\overline{\partial}\psi\wedge\omega^{n-3}_{\psi}\\
&-&\int_{X}\frac{\partial\psi}{\partial
t}\cdot\partial\left(\frac{\partial\psi}{\partial
s}\right)\wedge\partial\omega\wedge\overline{\partial}\psi\wedge(n-3)\omega^{n-4}_{\psi}\wedge\overline{\partial}\omega
\end{eqnarray*}
Therefore we obtain
\begin{eqnarray}
H_{1}&=&\int_{X}\partial\psi\wedge\partial\omega\wedge\overline{\partial}\left(\frac{\partial\psi}{\partial
s}\right)\wedge\overline{\partial}\left(\frac{\partial\psi}{\partial
t}\right)\wedge\omega^{n-3}_{\psi} \label{2.44}\\
&-&\int_{X}\frac{\partial\psi}{\partial
t}\cdot\overline{\partial}\left(\frac{\partial\psi}{\partial
s}\right)\wedge\overline{\partial}\partial\omega\wedge\partial\psi\wedge\omega^{n-3}_{\psi}
\nonumber\\
&+&\int_{X}\frac{\partial\psi}{\partial
t}\cdot\overline{\partial}\left(\frac{\partial\psi}{\partial
s}\right)\wedge\partial\omega\wedge(n-3)\omega^{n-4}_{\psi}\wedge\overline{\partial}\omega\wedge\partial\psi\nonumber\\
&-&\int_{X}\frac{\partial\psi}{\partial
t}\cdot\partial\left(\frac{\partial\psi}{\partial
s}\right)\wedge\overline{\partial}\partial\omega\wedge\overline{\partial}\psi\wedge\omega^{n-3}_{\psi}
\nonumber\\
&+&\int_{X}\frac{\partial\psi}{\partial
t}\cdot\partial\left(\frac{\partial\psi}{\partial
s}\right)\wedge\partial\omega\wedge(n-3)\omega^{n-4}_{\psi}\wedge\overline{\partial}\omega\wedge\overline{\partial}\psi,\nonumber
\end{eqnarray}
and, taking the complex conjugate yields, using (\ref{2.3}) and (\ref{2.4})
\begin{eqnarray}
\overline{H_{1}}&=&\int_{X}\overline{\partial}\psi\wedge\overline{\partial}\omega\wedge\partial\left(\frac{\partial\psi}{\partial
s}\right)\wedge\partial\left(\frac{\partial\psi}{\partial
t}\right)\wedge\omega^{n-3}_{\psi} \label{2.45}\\
&-&\int_{X}\frac{\partial\psi}{\partial
t}\cdot\partial\left(\frac{\partial\psi}{\partial
s}\right)\wedge\partial\overline{\partial}\omega\wedge\overline{\partial}\psi\wedge\omega^{n-3}_{\psi}
\nonumber\\
&-&\int_{X}\frac{\partial\psi}{\partial
t}\cdot\overline{\partial}\left(\frac{\partial\psi}{\partial
s}\right)\wedge\partial\overline{\partial}\omega\wedge\partial\psi\wedge\omega^{n-3}_{\psi}
\nonumber\\
&+&\int_{X}\frac{\partial\psi}{\partial
t}\cdot\partial\left(\frac{\partial\psi}{\partial
s}\right)\wedge\overline{\partial}\omega\wedge(n-3)\omega^{n-4}_{\psi}\wedge\partial\omega\wedge\overline{\partial}\psi
\nonumber\\
&+&\int_{X}\frac{\partial\psi}{\partial
t}\cdot\overline{\partial}\left(\frac{\partial\psi}{\partial
s}\right)\wedge\overline{\partial}\omega\wedge(n-3)\omega^{n-4}_{\psi}\wedge\partial\omega\wedge\partial\psi.\nonumber
\end{eqnarray}
Adding (\ref{2.45}) to (\ref{2.45}), it follows that
\begin{equation}
H_{1}+\overline{H_{1}}=\frac{A_{1}+B_{1}}{-(n-2)\sqrt{-1}}+
\frac{\overline{A_{1}}+\overline{B_{1}}}{(n-2)\sqrt{-1}}.\label{2.46}
\end{equation}
and, hence,
\begin{eqnarray}
\frac{I^{3}}{a_{3}}+\frac{I^{4}}{a_{4}}&=&H_{1}+\overline{H_{1}}+\frac{2}{-(n-2)\sqrt{-1}}[(A_{1}+B_{1})-(\overline{A_{1}}+\overline{B_{1}})]\nonumber\\
&+&A_{2}+B_{2}+\overline{A_{2}}+\overline{B_{2}}
\nonumber\\
&=&\frac{3}{-(n-2)\sqrt{-1}}[(A_{1}+B_{1})-(\overline{A_{1}}+\overline{B_{1}})]+(A_{2}+B_{2})+(\overline{A_{2}}+\overline{B_{2}}).\label{2.47}
\end{eqnarray}
Set
\begin{equation}
c_{1}:=A_{1}+B_{1}-(\overline{A_{1}}+\overline{B_{1}}), \ \ \ c_{2}:=A_{2}+B_{2}+\overline{A_{2}}
+\overline{B_{2}}.
\end{equation}
we deduce
\begin{equation}
\frac{2I^{0}}{n(n-1)\sqrt{-1}}=\frac{I^{1}}{a_{1}}-\frac{I^{2}}{a_{2}}+c_{1}, \ \ \
\frac{I^{3}}{a_{3}}+\frac{I^{4}}{a_{4}}=\frac{3}{-(n-2)\sqrt{-1}}c_{1}+c_{2}.\label{2.49}
\end{equation}

\subsection{The constructions of $I^{5}$ and $I^{6}$}
To give the general construction of $I^{2i+1}$ and $I^{2i+2}$, we firstly consider some
special cases. In this subsection we give the construction of $I^{5}$ and $I^{6}$,
and in the next subsection the construction of $I^{7}$ and $I^{8}$. Finally, we give
the general construction.

From (\ref{2.41}), it follows that
\begin{eqnarray*}
A_{2}&=&\int_{X}(n-3)\partial\psi\wedge\overline{\partial}\psi\wedge\overline{\partial}
\left(\frac{\partial\psi}{\partial
s}\right)\wedge\omega^{n-4}_{\psi}\wedge\partial\omega\wedge-\sqrt{-1}\partial\overline{\partial}\left(\frac{\partial\psi}{\partial
t}\right) \\
&=&\int_{X}\sqrt{-1}\partial\left[(n-3)\partial\psi\wedge\overline{\partial}\psi
\wedge\overline{\partial}\left(\frac{\partial\psi}{\partial
s}\right)\wedge\omega^{n-4}_{\psi}\wedge\partial\omega\right]\wedge\overline{\partial}
\left(\frac{\partial\psi}{\partial t}\right) \\
&=&\int_{X}\sqrt{-1}(n-3)\left[\partial\left(\partial\psi\wedge\overline{\partial}\psi\wedge
\overline{\partial}\left(\frac{\partial\psi}{\partial
s}\right)\right)\wedge\omega^{n-4}_{\psi}\wedge\partial\omega\right]\wedge
\overline{\partial}\left(\frac{\partial\psi}{\partial t}\right)\\
&=&\int_{X}-\sqrt{-1}(n-3)\left[\partial\psi\wedge\left(\partial\overline{\partial}\psi
\wedge\overline{\partial}\left(\frac{\partial\psi}{\partial
s}\right)-\overline{\partial}\psi\wedge\partial\overline{\partial}\left(\frac{\partial\psi}{\partial
s}\right)\right)\right]\\
&&\wedge\omega^{n-4}_{\psi}\wedge\partial\omega\wedge
\overline{\partial}\left(\frac{\partial\psi}{\partial t}\right) \\
&=&\int_{X}(n-3)\partial\psi\wedge-\sqrt{-1}\partial\overline{\partial}\psi\wedge
\overline{\partial}\left(\frac{\partial\psi}{\partial
s}\right)\wedge\omega^{n-4}_{\psi}\wedge\partial\omega\wedge\overline{\partial}\left(\frac{\partial\psi}{\partial
t}\right)\\
&+&\int_{X}(n-3)\partial\psi\wedge\overline{\partial}\psi\wedge\sqrt{-1}
\partial\overline{\partial}\left(\frac{\partial\psi}{\partial
s}\right)\wedge\omega^{n-4}_{\psi}\wedge\partial\omega\wedge\overline{\partial}
\left(\frac{\partial\psi}{\partial t}\right) \\
&=&\int_{X}(n-3)\partial\psi\wedge\partial\omega\wedge\overline{\partial}
\left(\frac{\partial\psi}{\partial
s}\right)\wedge\overline{\partial}\left(\frac{\partial\psi}{\partial
t}\right)\wedge\omega^{n-4}_{\psi}\wedge\sqrt{-1}\partial\overline{\partial}\psi-B_{2};
\end{eqnarray*}
hence, by the definition (\ref{2.42}), we have
\begin{equation}
A_{2}+B_{2}=\int_{X}(n-3)\partial\psi\wedge\partial\omega\wedge\overline{\partial}
\left(\frac{\partial\psi}{\partial
s}\right)\wedge\overline{\partial}\left(\frac{\partial\psi}{\partial
t}\right)\wedge\omega^{n-4}_{\psi}\wedge\sqrt{-1}\partial\overline{\partial}\psi.\label{2.50}
\end{equation}
Motivated by (\ref{2.50}), we set
\begin{equation}
\mathcal{L}^{5}_{\omega}(\varphi',\varphi''):=\frac{1}{V_{\omega}}\int^{1}_{0}
\int_{X}a_{5}\partial\varphi_{t}\wedge\partial\omega\wedge\overline{\partial}\dot{\varphi}_{t}
\wedge\overline{\partial}\varphi_{t}\wedge\omega^{n-4}_{\varphi_{t}}\wedge\sqrt{-1}\partial\overline{\partial}\varphi_{t},
\label{2.51}\\
\end{equation}
\begin{equation}
\mathcal{L}^{6}_{\omega}(\varphi',\varphi''):=\frac{1}{V_{\omega}}\int^{1}_{0}\int_{X}a_{6}
\overline{\partial}\varphi_{t}\wedge\overline{\partial}\omega\wedge\partial\dot{\varphi}_{t}\wedge
\partial\varphi_{t}\wedge\omega^{n-4}_{\varphi_{t}}\wedge\sqrt{-1}
\partial\overline{\partial}\varphi_{t}.\label{2.52}
\end{equation}
Consider again the $1$-forms
\begin{eqnarray}
\Psi^{5}&=&\left[\int_{X}a_{5}\partial\psi\wedge\partial\omega\wedge\overline{\partial}\left(\frac{\partial\psi}{\partial
s}\right)\wedge\overline{\partial}\psi\wedge\omega^{n-4}_{\psi}\wedge\sqrt{-1}\partial\overline{\partial}\psi\right]ds
\nonumber\\
&+&\left[\int_{X}a_{5}\partial\psi\wedge\partial\omega\wedge\overline{\partial}
\left(\frac{\partial\psi}{\partial
t}\right)\wedge\overline{\partial}\psi\wedge\omega^{n-4}_{\psi}\wedge\sqrt{-1}\partial\overline{\partial}\psi\right]dt, \label{2.53} \\
\Psi^{6}&=&\left[\int_{X}a_{6}\overline{\partial}\psi\wedge\overline{\partial}\omega
\wedge\partial\left(\frac{\partial\psi}{\partial s}\right)\wedge\partial\psi\wedge
\omega^{n-4}_{\psi}\wedge\sqrt{-1}\partial\overline{\partial}\psi\right]ds\nonumber \\
&+&\left[\int_{X}a_{6}\overline{\partial}\psi\wedge\overline{\partial}\omega
\wedge\partial\left(\frac{\partial\psi}{\partial t}\right)\wedge\partial\psi
\wedge\omega^{n-4}_{\psi}\wedge\sqrt{-1}\partial\overline{\partial}\psi\right]dt.\label{2.54}
\end{eqnarray}
The differential of $\Psi^{5}$ is given by
\begin{equation}
d\Psi^{5}=I^{5}\cdot dt\wedge ds,\label{2.55}
\end{equation}
where
\begin{eqnarray}
\frac{I^{5}}{a_{5}}&:=&\int_{X}\frac{\partial}{\partial
t}\left[\partial\psi\wedge\partial\omega\wedge\overline{\partial}\left(\frac{\partial\psi}{\partial
s}\right)\wedge\overline{\partial}\psi\wedge\omega^{n-4}_{\psi}\wedge\sqrt{-1}
\partial\overline{\partial}\psi\right]\nonumber\\
&-&\int_{X}\frac{\partial}{\partial
s}\left[\partial\psi\wedge\partial\omega\wedge\overline{\partial}\left(\frac{\partial\psi}{\partial
t}\right)\wedge\overline{\partial}\psi\wedge\omega^{n-4}_{\psi}
\wedge\sqrt{-1}\partial\overline{\partial}\psi\right].\label{2.56}
\end{eqnarray}
Hence, using (\ref{2.50}),
\begin{eqnarray*}
\frac{I^{5}}{a_{5}}&=&\int_{X}\left[\partial\left(\frac{\partial\psi}{\partial
t}\right)\wedge\partial\omega\wedge\overline{\partial}\left(\frac{\partial\psi}{\partial
s}\right)\wedge\overline{\partial}\psi\wedge\omega^{n-4}_{\psi}\wedge\sqrt{-1}\partial\overline{\partial}\psi\right.
\\
&+&\partial\psi\wedge\partial\omega\wedge\overline{\partial}\left(\frac{\partial^{2}\psi}{\partial
t\partial
s}\right)\wedge\overline{\partial}\psi\wedge\omega^{n-4}_{\psi}\wedge\sqrt{-1}\partial\overline{\partial}\psi
\\
&+&\partial\psi\wedge\partial\omega\wedge\overline{\partial}\left(\frac{\partial\psi}{\partial
s}\right)\wedge\overline{\partial}\left(\frac{\partial\psi}{\partial
t}\right)\wedge\omega^{n-4}_{\psi}\wedge\sqrt{-1}\partial\overline{\partial}\psi
\\
&+&\partial\psi\wedge\partial\omega\wedge\overline{\partial}\left(\frac{\partial\psi}{\partial
s}\right)\wedge\overline{\partial}\psi\wedge(n-4)\omega^{n-5}_{\psi}\wedge\sqrt{-1}\partial\overline{\partial}\left(\frac{\partial\psi}{\partial
t}\right)\wedge\sqrt{-1}\partial\overline{\partial}\psi \\
&+&\left.\partial\psi\wedge\partial\omega\wedge\overline{\partial}\left(\frac{\partial\psi}{\partial
s}\right)\wedge\overline{\partial}\psi\wedge\omega^{n-4}_{\psi}\wedge\sqrt{-1}\partial\overline{\partial}\left(\frac{\partial\psi}{\partial
t}\right)\right] \\
&-&\int_{X}\left[\partial\left(\frac{\partial\psi}{\partial
s}\right)\wedge\partial\omega\wedge\overline{\partial}\left(\frac{\partial\psi}{\partial
t}\right)\wedge\overline{\partial}\psi\wedge\omega^{n-4}_{\psi}\wedge\sqrt{-1}\partial\overline{\partial}\psi\right.
\\
&+&\partial\psi\wedge\partial\omega\wedge\overline{\partial}\left(\frac{\partial^{2}\psi}{\partial
s\partial
t}\right)\wedge\overline{\partial}\psi\wedge\omega^{n-4}_{\psi}\wedge\sqrt{-1}\partial\overline{\partial}\psi
\\
&+&\partial\psi\wedge\partial\omega\wedge\overline{\partial}\left(\frac{\partial\psi}{\partial
t}\right)\wedge\overline{\partial}\left(\frac{\partial\psi}{\partial
s}\right)\wedge\omega^{n-4}_{\psi}\wedge\sqrt{-1}\partial\overline{\partial}\psi
\\
&+&\partial\psi\wedge\partial\omega\wedge\overline{\partial}\left(\frac{\partial\psi}{\partial
t}\right)\wedge\overline{\partial}\psi\wedge(n-4)\omega^{n-5}_{\psi}\wedge
\sqrt{-1}\partial\overline{\partial}\left(\frac{\partial\psi}{\partial
s}\right)\wedge\sqrt{-1}\partial\overline{\partial}\psi \\
&+&\left.\partial\psi\wedge\partial\omega\wedge\overline{\partial}\left(\frac{\partial\psi}{\partial
t}\right)\wedge\overline{\partial}\psi\wedge\omega^{n-4}_{\psi}\wedge\sqrt{-1}\partial\overline{\partial}\left(\frac{\partial\psi}{\partial
s}\right)\right] \\
&=&\int_{X}-\partial\left(\frac{\partial\psi}{\partial
t}\right)\wedge\overline{\partial}\left(\frac{\partial\psi}{\partial
s}\right)\wedge\partial\omega\wedge\overline{\partial}\psi\wedge\omega^{n-4}_{\psi}\wedge\sqrt{-1}\partial\overline{\partial}\psi
\\
&+&\int_{X}\partial\left(\frac{\partial\psi}{\partial
s}\right)\wedge\overline{\partial}\left(\frac{\partial\psi}{\partial
t}\right)\wedge\partial\omega\wedge\overline{\partial}\psi\wedge\omega^{n-4}_{\psi}\wedge\sqrt{-1}\partial\overline{\partial}\psi
\\
&+&2\int_{X}\partial\psi\wedge\partial\omega\wedge\overline{\partial}\left(\frac{\partial\psi}{\partial
s}\right)\wedge\overline{\partial}\left(\frac{\partial\psi}{\partial
t}\right)\wedge\omega^{n-4}_{\psi}\wedge\sqrt{-1}\partial\overline{\partial}\psi
\\
&+&\int_{X}\partial\psi\wedge\partial\omega\wedge\overline{\partial}\left(\frac{\partial\psi}{\partial
s}\right)\wedge\overline{\partial}\psi\wedge(n-4)\omega^{n-5}_{\psi}\wedge\sqrt{-1}\partial\overline{\partial}
\left(\frac{\partial\psi}{\partial
t}\right)\wedge\sqrt{-1}\partial\overline{\partial}\psi \\
&-&\int_{X}\partial\psi\wedge\partial\omega\wedge\overline{\partial}\left(\frac{\partial\psi}{\partial
t}\right)\wedge\overline{\partial}\psi\wedge(n-4)\omega^{n-5}_{\psi}\wedge\sqrt{-1}\partial\overline{\partial}\left(\frac{\partial\psi}{\partial
s}\right)\wedge\sqrt{-1}\partial\overline{\partial}\psi\\
&+&\frac{A_{2}}{n-3}+\frac{B_{2}}{n-3} \\
&=&H_{2}+\frac{3}{n-3}(A_{2}+B_{2})+A_{3}+B_{3},
\end{eqnarray*}
where
\begin{eqnarray}
H_{2}&:=&\int_{X}-\partial\left(\frac{\partial\psi}{\partial
t}\right)\wedge\overline{\partial}\left(\frac{\partial\psi}{\partial
s}\right)\wedge\partial\omega\wedge\overline{\partial}\psi\wedge\omega^{n-4}_{\psi}\wedge\sqrt{-1}\partial\overline{\partial}\psi
\nonumber\\
&+&\int_{X}\partial\left(\frac{\partial\psi}{\partial
s}\right)\wedge\overline{\partial}\left(\frac{\partial\psi}{\partial
t}\right)\wedge\partial\omega\wedge\overline{\partial}\psi\wedge\omega^{n-4}_{\psi}\wedge\sqrt{-1}\partial\overline{\partial}\psi,
\label{2.57}\\
A_{3}&:=&\int_{X}(n-4)\partial\psi\wedge\overline{\partial}\psi\wedge\partial\omega
\wedge\overline{\partial}\left(\frac{\partial\psi}{\partial
s}\right)\wedge\omega^{n-5}_{\psi}\wedge\sqrt{-1}\partial\overline{\partial}\left(\frac{\partial\psi}{\partial
t}\right)\nonumber\\
&&\wedge\sqrt{-1}\partial\overline{\partial}\psi, \label{2.58}\\
B_{3}&:=&\int_{X}(n-4)\partial\psi\wedge\overline{\partial}\psi\wedge\partial\omega
\wedge\overline{\partial}\left(\frac{\partial\psi}{\partial
t}\right)\wedge\omega^{n-5}_{\psi}\wedge-\sqrt{-1}\partial\overline{\partial}
\left(\frac{\partial\psi}{\partial
s}\right)\nonumber \\
&&\wedge\sqrt{-1}\partial\overline{\partial}\psi.\label{2.59}
\end{eqnarray}
Similarly, for
\begin{equation}
d\Psi^{6}=:I^{6}\cdot dt\wedge ds,\label{2.60}
\end{equation}
we have
\begin{equation}
\frac{I^{6}}{a_{6}}=\overline{H_{2}}+\frac{3}{n-3}(\overline{A_{2}}+
\overline{B_{2}})+\overline{A_{3}}+\overline{B_{3}}.\label{2.61}
\end{equation}
As (\ref{2.44}), the hard part $H_{2}$ is calculated as follows:
\begin{eqnarray*}
H_{2}&=&\int_{X}\partial\left[\overline{\partial}\left(\frac{\partial\psi}{\partial
s}\right)\wedge\partial\omega\wedge\overline{\partial}\psi\wedge\omega^{n-4}_{\psi}
\wedge\sqrt{-1}\partial\overline{\partial}\psi\right]\frac{\partial\psi}{\partial
t} \\
&+&\int_{X}\overline{\partial}\left[\partial\left(\frac{\partial\psi}{\partial
s}\right)\wedge\partial\omega\wedge\overline{\partial}\psi\wedge\omega^{n-4}_{\psi}
\wedge\sqrt{-1}\partial\overline{\partial}\psi\right]\frac{\partial\psi}{\partial
t}\\
&=&\int_{X}\frac{\partial\psi}{\partial
t}\left[\partial\overline{\partial}\left(\frac{\partial\psi}{\partial
s}\right)\wedge\partial\omega\wedge\overline{\partial}\psi\wedge\omega^{n-4}_{\psi}\wedge\sqrt{-1}\partial
\overline{\partial}\psi\right. \\
&-&\left.\overline{\partial}\left(\frac{\partial\psi}{\partial
s}\right)\wedge\partial\left(\partial\omega\wedge\overline{\partial}\psi\wedge\omega^{n-4}_{\psi}
\wedge\sqrt{-1}\partial\overline{\partial}\psi\right)\right] \\
&+&\int_{X}\frac{\partial\psi}{\partial
t}\left[\overline{\partial}\partial\left(\frac{\partial\psi}{\partial
s}\right)\wedge\partial\omega\wedge\overline{\partial}\psi\wedge\omega^{n-4}_{\psi}
\wedge\sqrt{-1}\partial\overline{\partial}\psi\right. \\
&-&\left.\partial\left(\frac{\partial\psi}{\partial s}\right)\wedge
\overline{\partial}\left(\partial\omega\wedge\overline{\partial}\psi\wedge
\omega^{n-4}_{\psi}\wedge\sqrt{-1}\partial\overline{\partial}\psi\right)\right]\\
&=&\int_{X}\frac{\partial\psi}{\partial
t}\cdot\overline{\partial}\left(\frac{\partial\psi}{\partial
s}\right)\wedge\partial\omega\wedge\partial\overline{\partial}\psi\wedge\omega^{n-4}_{\psi}\wedge\sqrt{-1}\partial\overline{\partial}\psi
\\
&-&\int_{X}\frac{\partial\psi}{\partial
t}\cdot\partial\left(\frac{\partial\psi}{\partial
s}\right)\wedge\overline{\partial}\partial\omega\wedge\overline{\partial}\psi\wedge\omega^{n-4}_{\psi}\wedge\sqrt{-1}\partial\overline{\partial}\psi
\\
&-&\int_{X}\frac{\partial\psi}{\partial
t}\cdot\partial\left(\frac{\partial\psi}{\partial
s}\right)\wedge\partial\omega\wedge\overline{\partial}\psi\wedge(n-4)
\omega^{n-5}_{\psi}\wedge\overline{\partial}\omega\wedge\sqrt{-1}\partial\overline{\partial}\psi
\\
&=&\int_{X}\sqrt{-1}\overline{\partial}\left[\frac{\partial\psi}{\partial
t}\cdot\overline{\partial}\left(\frac{\partial\psi}{\partial
s}\right)\wedge\partial\omega\wedge\partial\overline{\partial}\psi\wedge\omega^{n-4}_{\psi}\right]\wedge
\partial\psi \\
&-&\int_{X}\frac{\partial\psi}{\partial
t}\cdot\partial\left(\frac{\partial\psi}{\partial
s}\right)\wedge\overline{\partial}\partial\omega\wedge\overline{\partial}\psi\wedge\omega^{n-4}_{\psi}\wedge\sqrt{-1}\partial\overline{\partial}\psi
\\
&-&\int_{X}\frac{\partial\psi}{\partial
t}\cdot\partial\left(\frac{\partial\psi}{\partial
s}\right)\wedge\partial\omega\wedge\overline{\partial}\psi\wedge(n-4)
\omega^{n-5}_{\psi}\wedge\overline{\partial}\omega\wedge\sqrt{-1}\partial\overline{\partial}\psi.
\end{eqnarray*}
Hence
\begin{eqnarray}
H_{2}&=&\int_{X}\overline{\partial}\left(\frac{\partial\psi}{\partial
s}\right)\wedge\overline{\partial}\left(\frac{\partial\psi}{\partial
t}\right)\wedge\partial\psi\wedge\partial\omega\wedge\omega^{n-4}_{\psi}\wedge\sqrt{-1}\partial
\overline{\partial}\psi \label{2.62}\\
&-&\int_{X}\frac{\partial\psi}{\partial
t}\cdot\overline{\partial}\left(\frac{\partial\psi}{\partial
s}\right)\wedge\partial\psi\wedge\overline{\partial}\partial\omega\wedge
\omega^{n-4}_{\psi}\wedge\sqrt{-1}\partial\overline{\partial}\psi \nonumber\\
&+&\int_{X}\frac{\partial\psi}{\partial
t}\cdot\overline{\partial}\left(\frac{\partial\psi}{\partial
s}\right)\wedge\partial\omega\wedge\overline{\partial}\omega\wedge(n-4)
\omega^{n-5}_{\psi}\wedge\sqrt{-1}\partial\overline{\partial}\psi\wedge\partial\psi
\nonumber\\
&-&\int_{X}\frac{\partial\psi}{\partial
t}\cdot\partial\left(\frac{\partial\psi}{\partial
s}\right)\wedge\overline{\partial}\psi\wedge\overline{\partial}\partial\omega
\wedge\omega^{n-4}_{\psi}\wedge\sqrt{-1}\partial\overline{\partial}\psi
\nonumber\\
&+&\int_{X}\frac{\partial\psi}{\partial
t}\cdot\partial\left(\frac{\partial\psi}{\partial
s}\right)\wedge\partial\omega\wedge\overline{\partial}\omega\wedge(n-4)\omega^{n-5}_{\psi}
\wedge\sqrt{-1}\partial\overline{\partial}\psi\wedge\overline{\partial}\psi\nonumber
\end{eqnarray}
and, consequently, after taking the complex conjugate on both sides,
\begin{eqnarray}
\overline{H_{2}}&=&\int_{X}\partial\left(\frac{\partial\psi}{\partial
s}\right)\wedge\partial\left(\frac{\partial\psi}{\partial
t}\right)\wedge\overline{\partial}\psi\wedge\overline{\partial}\omega\wedge\omega^{n-4}_{\psi}\wedge\sqrt{-1}\partial\overline{\partial}\psi
\label{2.63}\\
&-&\int_{X}\frac{\partial\psi}{\partial
t}\cdot\partial\left(\frac{\partial\psi}{\partial
s}\right)\wedge\overline{\partial}\psi\wedge\partial\overline{\partial}\omega\wedge\omega^{n-4}_{\psi}
\wedge\sqrt{-1}\partial\overline{\partial}\psi \nonumber\\
&+&\int_{X}\frac{\partial\psi}{\partial
t}\cdot\partial\left(\frac{\partial\psi}{\partial
s}\right)\wedge\overline{\partial}\omega\wedge\partial\omega\wedge(n-4)
\omega^{n-5}_{\psi}\wedge\sqrt{-1}\partial\overline{\partial}\psi\wedge\overline{\partial}\psi
\nonumber\\
&-&\int_{X}\frac{\partial\psi}{\partial
t}\cdot\overline{\partial}\left(\frac{\partial\psi}{\partial
s}\right)\wedge\partial\psi\wedge\partial\overline{\partial}\omega\wedge\omega^{n-4}_{\psi}
\wedge\sqrt{-1}\partial\overline{\partial}\psi \nonumber\\
&+&\int_{X}\frac{\partial\psi}{\partial
t}\cdot\overline{\partial}\left(\frac{\partial\psi}{\partial
s}\right)\wedge\overline{\partial}\omega\wedge\partial\omega\wedge(n-4)\omega^{n-5}_{\psi}
\wedge\sqrt{-1}\partial\overline{\partial}\psi\wedge\partial\psi.\nonumber
\end{eqnarray}
Therefore
\begin{eqnarray}
H_{2}+\overline{H_{2}}&=&\frac{A_{2}+B_{2}}{n-3}+\frac{\overline{A_{2}}+\overline{B_{2}}}{n-3},
\label{2.64}\\
\frac{I^{5}}{a_{5}}+\frac{I^{6}}{a_{6}}&=&\frac{4}{n-3}(A_{2}+B_{2}+\overline{A_{2}}+\overline{B_{2}})+(A_{3}+B_{3}+\overline{A_{3}}+
\overline{B_{3}}).\label{2.65}
\end{eqnarray}
If we set
\begin{equation}
c_{3}:=A_{3}+B_{3}+\overline{A_{3}}+\overline{B_{3}}\label{2.66}
\end{equation}
we can rewrite (\ref{2.64}) and (\ref{2.65}) as
\begin{equation}
H_{2}+\overline{H_{2}}=\frac{c_{2}}{n-3}, \ \ \ \frac{I^{5}}{a_{5}}+\frac{I^{6}}{a_{6}}
=\frac{4}{n-3}c_{2}+c_{3}.\label{2.67}
\end{equation}

\subsection{The constructions of $I^{7}$ and $I^{8}$}
Recall
\begin{equation*}
A_{3}=\int_{X}\left[(n-4)\partial\psi\wedge\overline{\partial}\psi\wedge\partial\omega\wedge\overline{\partial}
\left(\frac{\partial\psi}{\partial
s}\right)\wedge\omega^{n-5}_{\psi}\wedge\sqrt{-1}\partial\overline{\partial}\psi\right]\wedge\sqrt{-1}\partial\overline{\partial}\left(\frac{\partial\psi}{\partial
t}\right).
\end{equation*}
By a direct computation, one deduces that
\begin{eqnarray*}
A_{3}&=&\int_{X}-\sqrt{-1}\partial\left[(n-4)\partial\psi\wedge\overline{\partial}\psi\wedge\partial\omega\wedge\overline{\partial}\left(\frac{\partial\psi}{\partial
s}\right)\wedge\omega^{n-5}_{\psi}\wedge\sqrt{-1}\partial\overline{\partial}\psi\right]\\
&&\wedge\overline{\partial}\left(\frac{\partial\psi}{\partial
t}\right) \\
&=&\int_{X}\sqrt{-1}(n-4)\partial\psi\wedge\partial\left(\overline{\partial}\psi\wedge\partial\omega\wedge\overline{\partial}\left(\frac{\partial\psi}{\partial
s}\right)\wedge\omega^{n-5}_{\psi}\wedge\sqrt{-1}\partial\overline{\partial}\psi\right)\\
&&\wedge\overline{\partial}\left(\frac{\partial\psi}{\partial
t}\right) \\
&=&\int_{X}\sqrt{-1}(n-4)\partial\psi\wedge\left[\partial\overline{\partial}\psi\wedge\partial\omega\wedge\overline{\partial}\left(\frac{\partial\psi}{\partial
s}\right)\wedge\omega^{n-5}_{\psi}\wedge\sqrt{-1}\partial\overline{\partial}\psi\right.
\\
&+&\left.\overline{\partial}\psi\wedge\partial\omega\wedge\partial\left(\overline{\partial}\left(\frac{\partial\psi}{\partial
s}\right)\wedge\omega^{n-5}_{\psi}\wedge\sqrt{-1}\partial\overline{\partial}\psi\right)\right]\wedge\overline{\partial}\left(\frac{\partial\psi}{\partial
t}\right) \\
&=&\int_{X}\sqrt{-1}(n-4)\partial\psi\wedge\left[\partial\overline{\partial}\psi\wedge\partial\omega\wedge\overline{\partial}\left(\frac{\partial\psi}{\partial
s}\right)\wedge\omega^{n-5}_{\psi}\wedge\sqrt{-1}\partial\overline{\partial}\psi\right.
\\
&+&\left.\overline{\partial}\psi\wedge\partial\omega\wedge\left(\partial\overline{\partial}\left(\frac{\partial\psi}{\partial
s}\right)\wedge\omega^{n-5}_{\psi}\wedge\sqrt{-1}\partial\overline{\partial}\psi\right)\right]\wedge\overline{\partial}\left(\frac{\partial\psi}{\partial
t}\right) \\
&=&\int_{X}(n-4)\partial\psi\wedge\partial\omega\wedge\overline{\partial}\left(\frac{\partial\psi}{\partial
s}\right)\wedge\overline{\partial}\left(\frac{\partial\psi}{\partial
t}\right)\wedge\omega^{n-5}_{\psi}\wedge(\sqrt{-1}\partial\overline{\partial}\psi)^{2}
\\
&+&\int_{X}(n-4)\partial\psi\wedge\overline{\partial}\psi\wedge\partial\omega\wedge\overline{\partial}\left(\frac{\partial\psi}{\partial
t}\right)\wedge\sqrt{-1}\partial\overline{\partial}\left(\frac{\partial\psi}{\partial
s}\right)\wedge\omega^{n-5}_{\psi}\wedge\sqrt{-1}\partial\overline{\partial}\psi
\\
&=&-B_{3}+\int_{X}(n-4)\partial\psi\wedge\partial\omega\wedge\overline{\partial}\left(\frac{\partial\psi}{\partial
s}\right)\wedge\overline{\partial}\left(\frac{\partial\psi}{\partial
t}\right)\wedge\omega^{n-5}_{\psi}\wedge(\sqrt{-1}\partial\overline{\partial}\psi)^{2}.
\end{eqnarray*}
Consequently, it follows that
\begin{equation}
A_{3}+B_{3}=\int_{X}(n-4)\partial\psi\wedge\partial\omega\wedge\overline{\partial}\left(\frac{\partial\psi}{\partial
s}\right)\wedge\overline{\partial}\left(\frac{\partial\psi}{\partial
t}\right)\wedge\omega^{n-5}_{\psi}
\wedge(\sqrt{-1}\partial\overline{\partial}\psi)^{2}.\label{2.68}
\end{equation}
Now we introduce the corresponding functionals
\begin{equation}
\mathcal{L}^{7}_{\omega}(\varphi',\varphi''):=\frac{1}{V_{\omega}}\int^{1}_{0}\int_{X}a_{7}\partial\varphi_{t}\wedge\partial\omega\wedge\overline{\partial}
\dot{\varphi}_{t}\wedge\overline{\partial}\varphi_{t}\wedge\omega^{n-5}_{\varphi_{t}}\wedge(\sqrt{-1}\partial\overline{\partial}\varphi_{t})^{2},
\label{2.69}
\end{equation}
\begin{equation}
\mathcal{L}^{8}_{\omega}(\varphi',\varphi''):=\frac{1}{V_{\omega}}\int^{1}_{0}\int_{X}a_{8}\overline{\partial}\varphi_{t}\wedge\overline{\partial}\omega
\wedge\partial\dot{\varphi}_{t}\wedge\partial
\varphi_{t}\wedge\omega^{n-5}_{\varphi_{t}}\wedge(\sqrt{-1}
\partial\overline{\partial}\varphi_{t})^{2}\label{2.70}
\end{equation}
and consider the $1$-forms
\begin{eqnarray}
\Psi^{7}&=&\left[\int_{X}a_{7}\partial\psi\wedge\partial\omega\wedge\overline{\partial}\left(\frac{\partial\psi}{\partial
s}\right)\wedge\overline{\partial}\psi\wedge\omega^{n-5}_{\psi}\wedge(\sqrt{-1}\partial\overline{\partial}\psi)^{2}\right]ds
\nonumber\\
&+&\left[\int_{X}a_{7}\partial\psi\wedge\partial\omega\wedge\overline{\partial}\left(\frac{\partial\psi}{\partial
t}\right)\wedge\overline{\partial}\psi\wedge\omega^{n-5}_{\psi}\wedge(\sqrt{-1}\partial\overline{\partial}\psi)^{2}\right]dt,
\label{2.71}\\
\Psi^{8}&=&\left[\int_{X}a_{8}\overline{\partial}\psi\wedge\overline{\partial}\omega\wedge\partial\left(\frac{\partial\psi}{\partial
s}\right)\wedge\partial\psi\wedge
\omega^{n-5}_{\psi}\wedge(\sqrt{-1}\partial\overline{\partial}\psi)^{2}\right]ds
\nonumber\\
&+&\left[\int_{X}a_{8}\overline{\partial}\psi\wedge\overline{\partial}\omega\wedge\partial\left(\frac{\partial\psi}{\partial
t}\right)\wedge\partial\psi\wedge
\omega^{n-5}_{\psi}\wedge(\sqrt{-1}\partial\overline{\partial}\psi)^{2}\right]dt.\label{2.72}
\end{eqnarray}
So, we have the expression,
\begin{equation}
d\Psi^{7}=I^{7}\cdot dt\wedge ds, \ \ \ d\Psi^{8}=I^{8}\cdot\label{2.73}
dt\wedge ds,
\end{equation}
where
\begin{eqnarray*}
\frac{I^{7}}{a_{7}}&:=&\int_{X}\frac{\partial}{\partial
t}\left[\partial\psi\wedge\partial\omega\wedge\overline{\partial}\left(\frac{\partial\psi}{\partial
s}\right)\wedge\overline{\partial}\psi\wedge\omega^{n-5}_{\psi}\wedge(\sqrt{-1}\partial\overline{\partial}\psi)^{2}\right]
\\
&-&\int_{X}\frac{\partial}{\partial
s}\left[\partial\psi\wedge\partial\omega\wedge\overline{\partial}\left(\frac{\partial\psi}{\partial
t}\right)\wedge\overline{\partial}\psi\wedge\omega^{n-5}_{\psi}\wedge(\sqrt{-1}\partial\overline{\partial}\psi)^{2}\right]
\\
&=&\int_{X}\left[\partial\left(\frac{\partial\psi}{\partial
t}\right)\wedge\partial\omega\wedge\overline{\partial}\left(\frac{\partial\psi}{\partial
s}\right)\wedge\overline{\partial}\psi\wedge\omega^{n-5}_{\psi}\wedge(\sqrt{-1}\partial\overline{\partial}\psi)^{2}\right.
\\
&+&\partial\psi\wedge\partial\omega\wedge\overline{\partial}\left(\frac{\partial^{2}\psi}{\partial
t\partial
s}\right)\wedge\overline{\partial}\psi\wedge\omega^{n-5}_{\psi}\wedge(\sqrt{-1}\partial\overline{\partial}\psi)^{2}
\\
&+&\partial\psi\wedge\partial\omega\wedge\overline{\partial}\left(\frac{\partial\psi}{\partial
s}\right)\wedge\overline{\partial}\left(\frac{\partial\psi}{\partial
t}\right)\wedge\omega^{n-5}_{\psi}\wedge(\sqrt{-1}\partial\overline{\partial}\psi)^{2}
\\
&+&\partial\psi\wedge\partial\omega\wedge\overline{\partial}\left(\frac{\partial\psi}{\partial
s}\right)\wedge\overline{\partial}\psi\wedge(n-5)\omega^{n-6}_{\psi}\wedge\sqrt{-1}\partial\overline{\partial}\left(\frac{\partial\psi}{\partial
t}\right)\wedge(\sqrt{-1}\partial\overline{\partial}\psi)^{2} \\
&+&\left.\partial\psi\wedge\partial\omega\wedge\overline{\partial}\left(\frac{\partial\psi}{\partial
s}\right)\wedge\overline{\partial}\psi\wedge\omega^{n-5}_{\psi}\wedge2\sqrt{-1}\partial\overline{\partial}\psi\wedge\sqrt{-1}\partial\overline{\partial}\left(\frac{\partial\psi}{\partial
t}\right)\right] \\
&-&\int_{X}\left[\partial\left(\frac{\partial\psi}{\partial
s}\right)\wedge\partial\omega\wedge\overline{\partial}\left(\frac{\partial\psi}{\partial
t}\right)\wedge\overline{\partial}\psi\wedge\omega^{n-5}_{\psi}\wedge(\sqrt{-1}\partial\overline{\partial}\psi)^{2}\right.\\
&+&\partial\psi\wedge\partial\omega\wedge\overline{\partial}\left(\frac{\partial^{2}\psi}{\partial
s\partial
t}\right)\wedge\overline{\partial}\psi\wedge\omega^{n-5}_{\psi}\wedge(\sqrt{-1}\partial\overline{\partial}\psi)^{2}
\\
&+&\partial\psi\wedge\partial\omega\wedge\overline{\partial}\left(\frac{\partial\psi}{\partial
t}\right)\wedge\overline{\partial}\left(\frac{\partial\psi}{\partial
s}\right)\wedge\omega^{n-5}_{\psi}\wedge(\sqrt{-1}\partial\overline{\partial}\psi)^{2}
\\
&+&\partial\psi\wedge\partial\omega\wedge\overline{\partial}\left(\frac{\partial\psi}{\partial
t}\right)\wedge\overline{\partial\psi}\wedge(n-5)\omega^{n-6}_{\psi}\wedge\sqrt{-1}\partial\overline{\partial}\left(\frac{\partial\psi}{\partial
s}\right)\wedge(\sqrt{-1}\partial\overline{\partial}\psi)^{2} \\
&+&\left.\partial\psi\wedge\partial\omega\wedge\overline{\partial}\left(\frac{\partial\psi}{\partial
t}\right)\wedge\overline{\partial}\psi\wedge\omega^{n-5}_{\psi}\wedge2\sqrt{-1}\partial\overline{\partial}\psi\wedge\sqrt{-1}\partial\overline{\partial}
\left(\frac{\partial\psi}{\partial s}\right)\right] \\
&=&\int_{X}-\partial\left(\frac{\partial\psi}{\partial
t}\right)\wedge\overline{\partial}\left(\frac{\partial\psi}{\partial
s}\right)\wedge\partial\omega\wedge\overline{\partial}\psi\wedge\omega^{n-5}_{\psi}\wedge(\sqrt{-1}\partial\overline{\partial}\psi)^{2}
\\
&+&\int_{X}\partial\left(\frac{\partial\psi}{\partial
s}\right)\wedge\overline{\partial}\left(\frac{\partial\psi}{\partial
t}\right)\wedge\partial\omega\wedge\overline{\partial}\psi\wedge\omega^{n-5}_{\psi}\wedge(\sqrt{-1}\partial\partial\overline{\partial}\psi)^{2}
\\
&+&2\int_{X}\partial\psi\wedge\partial\omega\wedge\overline{\partial}\left(\frac{\partial\psi}{\partial
s}\right)\wedge\overline{\partial}\left(\frac{\partial\psi}{\partial
t}\right)\wedge\omega^{n-5}_{\psi}\wedge(\sqrt{-1}\partial\overline{\partial}\psi)^{2}
\\
&+&\int_{X}\partial\psi\wedge\partial\omega\wedge\overline{\partial}\left(\frac{\partial\psi}{\partial
s}\right)\wedge\overline{\partial}\psi\wedge(n-5)\omega^{n-6}_{\psi}\wedge\sqrt{-1}\partial\overline{\partial}\left(\frac{\partial\psi}{\partial
t}\right)\wedge(\sqrt{-1}\partial\overline{\partial}\psi)^{2} \\
&-&\int_{X}\partial\psi\wedge\partial\omega\wedge\overline{\partial}\left(\frac{\partial\psi}{\partial
t}\right)\wedge\overline{\partial}\psi\wedge(n-5)\omega^{n-6}_{\psi}\wedge\sqrt{-1}\partial\overline{\partial}\left(\frac{\partial\psi}{\partial
s}\right)\wedge(\sqrt{-1}\partial\overline{\partial}\psi)^{2} \\
&+&\frac{2}{n-4}(A_{3}+B_{3}) \\
&=&H_{3}+\frac{4}{n-4}(A_{3}+B_{3})+A_{4}+B_{4}
\end{eqnarray*}
where
\begin{eqnarray}
H_{3}&:=&\int_{X}-\partial\left(\frac{\partial\psi}{\partial
t}\right)\wedge\overline{\partial}\left(\frac{\partial\psi}{\partial
s}\right)\wedge\partial\omega\wedge\overline{\partial}\psi\wedge\omega^{n-5}_{\psi}\wedge(\sqrt{-1}\partial\overline{\partial}\psi)^{2}
\nonumber\\
&+&\int_{X}\partial\left(\frac{\partial\psi}{\partial
s}\right)\wedge\overline{\partial}\left(\frac{\partial\psi}{\partial
t}\right)\wedge\partial\omega\wedge\overline{\partial}\psi\wedge
\omega^{n-5}\wedge(\sqrt{-1}\partial\overline{\partial}\psi)^{2}\label{2.74}
\end{eqnarray}
and
\begin{eqnarray}
A_{4}&:=&\int_{X}(n-5)\partial\psi\wedge\overline{\partial}\psi\wedge\partial\omega\wedge\overline{\partial}\left(\frac{\partial\psi}{\partial
s}\right)\wedge\omega^{n-6}_{\psi}\nonumber\\
&&\wedge\sqrt{-1}\partial\overline{\partial}\left(\frac{\partial\psi}{\partial
t}\right)\wedge(\sqrt{-1}\partial\overline{\partial}\psi)^{2}, \label{2.75}\\
B_{4}&:=&\int_{X}(n-5)\partial\psi\wedge\overline{\partial}\psi\wedge\partial\omega\wedge\overline{\partial}\left(\frac{\partial\psi}{\partial
t}\right)\wedge\omega^{n-6}_{\psi}\nonumber\\
&&\wedge-\sqrt{-1}\partial\overline{\partial}\left(\frac{\partial\psi}{\partial
s}\right)\wedge(\sqrt{-1}\partial\overline{\partial}\psi)^{2}.\label{2.76}
\end{eqnarray}

Hence
\begin{equation}
\frac{I^{7}}{a_{7}}=H_{3}+\frac{4}{n-4}(A_{3}+B_{3})+A_{4}+B_{4}.\label{2.77}
\end{equation}

Similarly
\begin{equation}
\frac{I^{8}}{a_{8}}=\overline{H_{3}}+\frac{4}{n-4}(\overline{A_{3}}
+\overline{B_{3}})+\overline{A_{4}}+\overline{B_{4}}.\label{2.78}
\end{equation}

Calculate
\begin{eqnarray*}
H_{3}&=&\int_{X}\partial\left[\overline{\partial}\left(\frac{\partial\psi}{\partial
s}\right)\wedge\partial\omega\wedge\overline{\partial}\psi\wedge\omega^{n-5}_{\psi}\wedge(\sqrt{-1}\partial\overline{\partial}\psi)^{2}\right]\frac{\partial\psi}{\partial
t} \\
&+&\int_{X}\overline{\partial}\left[\partial\left(\frac{\partial\psi}{\partial
s}\right)\wedge\partial\omega\wedge\overline{\partial}\psi\wedge\omega^{n-5}_{\psi}\wedge(\sqrt{-1}\partial\overline{\partial}\psi)^{2}\right]\frac{\partial\psi}{\partial
t} \\
&=&\int_{X}\frac{\partial\psi}{\partial
t}\left[\partial\overline{\partial}\left(\frac{\partial\psi}{\partial
s}\right)\wedge\partial\omega\wedge\overline{\partial}\psi\wedge\omega^{n-5}_{\psi}\wedge(\sqrt{-1}\partial\overline{\partial}\psi)^{2}\right.
\\
&-&\left.\overline{\partial}\left(\frac{\partial\psi}{\partial
s}\right)\wedge\partial\left(\partial\omega\wedge\overline{\partial}\psi\wedge\omega^{n-5}_{\psi}\wedge(\sqrt{-1}\partial\overline{\partial}\psi\right)^{2})\right]
\\
&+&\int_{X}\frac{\partial\psi}{\partial
t}\left[\overline{\partial}\partial\left(\frac{\partial\psi}{\partial
s}\right)\wedge\partial\omega\wedge\overline{\partial}\psi\wedge\omega^{n-5}_{\psi}\wedge(\sqrt{-1}\partial\overline{\partial}\psi)^{2}\right.
\\
&-&\left.\partial\left(\frac{\partial\psi}{\partial
s}\right)\wedge\overline{\partial}\left(\partial\omega\wedge\overline{\partial}\psi\wedge\omega^{n-5}_{\psi}\wedge(\sqrt{-1}\partial\overline{\partial}\psi)^{2}\right)\right]
\\
&=&\int_{X}\frac{\partial\psi}{\partial
t}\cdot\overline{\partial}\left(\frac{\partial\psi}{\partial
s}\right)\wedge\partial\omega\wedge\partial\overline{\partial}\psi\wedge\omega^{n-5}_{\psi}\wedge(\sqrt{-1}\partial\overline{\partial}\psi)^{2}
\\
&-&\int_{X}\frac{\partial\psi}{\partial
t}\cdot\partial\left(\frac{\partial\psi}{\partial
s}\right)\wedge\overline{\partial}\partial\omega\wedge\overline{\partial}\psi\wedge\omega_{\psi}^{n-5}\wedge(\sqrt{-1}\partial\overline{\partial}\psi)^{2}
\\
&-&\int_{X}\frac{\partial\psi}{\partial
t}\cdot\partial\left(\frac{\partial\psi}{\partial
s}\right)\wedge\partial\omega\wedge\overline{\partial}\psi\wedge(n-5)\omega^{n-6}_{\psi}\wedge\overline{\partial}\omega\wedge(\sqrt{-1}\partial\overline{\partial}\psi)^{2}.
\end{eqnarray*}
Since
\begin{eqnarray}
& & \int_{X}\frac{\partial\psi}{\partial
t}\cdot\overline{\partial}\left(\frac{\partial\psi}{\partial
s}\right)\wedge\partial\omega\wedge\overline{\partial}\psi\wedge\omega^{n-5}_{\psi}\wedge(\sqrt{-1}\partial\overline{\partial}\psi)^{2}
\label{2.79}\\
&=&\int_{X}\left[\frac{\partial\psi}{\partial
t}\cdot\overline{\partial}\left(\frac{\partial\psi}{\partial
s}\right)\wedge\partial\omega\wedge\partial\overline{\partial}\psi\wedge\omega^{n-5}_{\psi}\wedge\sqrt{-1}\partial\overline{\partial}\psi\right]\wedge-\sqrt{-1}\overline{\partial}\partial\psi
\nonumber\\
&=&\int_{X}\sqrt{-1}\overline{\partial}\left(\frac{\partial\psi}{\partial
t}\right)\wedge\overline{\partial}\left(\frac{\partial\psi}{\partial
s}\right)\wedge\partial\omega\wedge\partial\overline{\partial}\psi\wedge\omega^{n-5}_{\psi}\wedge\sqrt{-1}\partial\overline{\partial}\psi\wedge\partial\psi
\nonumber\\
&-&\int_{X}\sqrt{-1}\frac{\partial\psi}{\partial
t}\cdot\overline{\partial}\left(\frac{\partial\psi}{\partial
s}\right)\wedge\left(\overline{\partial}\partial\omega\wedge\partial\overline{\partial}\psi\wedge\omega^{n-5}_{\psi}\wedge\sqrt{-1}\partial\overline{\partial}\psi\right.\nonumber\\
&-&\left.\partial\omega\wedge\partial\overline{\partial}\psi\wedge(n-5)\omega^{n-6}_{\psi}\wedge\overline{\partial}\omega\wedge\sqrt{-1}\partial\overline{\partial}\psi\right)\wedge\partial\psi,\nonumber
\end{eqnarray}
it follows that
\begin{eqnarray}
H_{3}&=&\int_{X}\overline{\partial}\left(\frac{\partial\psi}{\partial
s}\right)\wedge\overline{\partial}\left(\frac{\partial\psi}{\partial
t}\right)\wedge\partial\psi\wedge\partial\omega\wedge\omega^{n-5}_{\psi}\wedge(\sqrt{-1}\partial\overline{\partial}\psi)^{2}
\nonumber\\
&-&\int_{X}\frac{\partial\psi}{\partial
t}\cdot\overline{\partial}\left(\frac{\partial\psi}{\partial
s}\right)\wedge\partial\psi\wedge\overline{\partial}\partial\omega\wedge\omega^{n-5}_{\psi}\wedge(\sqrt{-1}\partial\overline{\partial}\psi)^{2}
\label{2.80}\\
&+&\int_{X}\frac{\partial\psi}{\partial
t}\cdot\overline{\partial}\left(\frac{\partial\psi}{\partial
s}\right)\wedge\partial\omega\wedge\overline{\partial}\omega\wedge(n-5)\omega^{n-6}_{\psi}\wedge(\sqrt{-1}\partial\overline{\partial}\psi)^{2}\wedge\partial\psi
\nonumber\\
&-&\int_{X}\frac{\partial\psi}{\partial
t}\cdot\partial\left(\frac{\partial\psi}{\partial
s}\right)\wedge\overline{\partial}\psi\wedge\overline{\partial}\partial\omega\wedge\omega^{n-5}_{\psi}\wedge(\sqrt{-1}\partial\overline{\partial}\psi)^{2}
\nonumber\\
&+&\int_{X}\frac{\partial\psi}{\partial
t}\cdot\partial\left(\frac{\partial\psi}{\partial
s}\right)\wedge\partial\omega\wedge\overline{\partial}\omega\wedge(n-5)\omega^{n-6}_{\psi}\wedge(\sqrt{-1}\partial\overline{\partial}\psi)^{2}\wedge\overline{\partial}\psi.\nonumber
\end{eqnarray}

Hence
\begin{eqnarray}
H_{3}+\overline{H_{3}}&=&\frac{A_{3}+B_{3}}{n-4}+\frac{\overline{A_{3}}+\overline{B_{3}}}{n-4},
\label{2.81}, \\
\frac{I^{7}}{a_{7}}+\frac{I^{8}}{a_{8}}&=&\frac{5}{n-4}(A_{3}+B_{3}+\overline{A_{3}}+\overline{B_{3}})
+(A_{4}+B_{4}+\overline{A_{4}}+\overline{B_{4}}).\label{2.82}
\end{eqnarray}
Set
\begin{equation}
c_{4}:=A_{4}+B_{4}+\overline{A_{4}}+\overline{B_{4}}.\label{2.83}
\end{equation}
Then
\begin{equation}
\frac{I^{7}}{a_{7}}+\frac{I^{8}}{a_{8}}=\frac{5}{n-4}c_{3}+c_{4}.\label{2.84}
\end{equation}

\subsection{Recursion formula}
Suppose now $n\geq4$. we define, for $2\leq i\leq n-2$,
\begin{eqnarray}
A_{i}&:=&\int_{X}\left[(n-i-1)\partial\psi\wedge\overline{\partial}\psi\wedge\partial\omega\wedge\overline{\partial}\left(\frac{\partial\psi}{\partial
s}\right)\wedge\omega^{n-i-2}_{\psi}\wedge(\sqrt{-1}\partial\overline{\partial}\psi)^{i-2}\right]
\nonumber\\
&&\wedge\sqrt{-1}\partial\overline{\partial}\left(\frac{\partial\psi}{\partial
t}\right), \label{2.85}\\
B_{i}&;=&\int_{X}\left[(n-i-1)\partial\psi\wedge\overline{\partial}\psi\wedge\partial\omega\wedge\overline{\partial}\left(\frac{\partial\psi}{\partial
t}\right)\wedge\omega^{n-i-2}_{\psi}\wedge(\sqrt{-1}\partial\overline{\partial}\psi)^{i-2}\right]
\nonumber\\
&&\wedge-\sqrt{-1}\partial\overline{\partial}\left(\frac{\partial\psi}{\partial
s}\right).\label{2.86}
\end{eqnarray}

So

\begin{eqnarray*}
A_{i}&=&\int_{X}\left[(n-i-1)\partial\psi\wedge\overline{\partial}\psi\wedge\partial\omega\wedge\overline{\partial}\left(\frac{\partial\psi}{\partial
s}\right)\wedge\omega^{n-i-2}_{\psi}\wedge(\sqrt{-1}\partial\overline{\partial}\psi)^{i-2}\right]\\
&&\wedge\sqrt{-1}\partial\overline{\partial}\left(\frac{\partial\psi}{\partial
t}\right) \\
&=&\int_{X}-\sqrt{-1}\partial\left[(n-i-1)\partial\psi\wedge\overline{\partial}\psi\wedge\partial\omega\wedge\overline{\partial}\left(\frac{\partial\psi}{\partial
s}\right)\wedge\omega^{n-i-2}_{\psi}\wedge(\sqrt{-1}\partial\overline{\partial}\psi)^{i-2}\right]\\
&&\wedge\overline{\partial}\left(\frac{\partial\psi}{\partial
t}\right) \\
&=&\int_{X}\sqrt{-1}(n-i-1)\partial\psi\wedge\left[\partial\overline{\partial}\psi\wedge\partial\omega\wedge\overline{\partial}\left(\frac{\partial\psi}{\partial
s}\right)\wedge\omega^{n-i-2}_{\psi}\wedge(\sqrt{-1}\partial\overline{\partial}\psi)^{i-2}\right.
\\
&-&\left.\overline{\partial}\psi\wedge\partial\left(\partial\omega\wedge\overline{\partial}\left(\frac{\partial\psi}{\partial
s}\right)\wedge\omega^{n-i-2}_{\psi}\wedge(\sqrt{-1}\partial\overline{\partial}\psi)^{i-2}\right)\right]\wedge\overline{\partial}\left(\frac{\partial\psi}{\partial
t}\right)\\
&=&\int_{X}\sqrt{-1}(n-i-1)\partial\psi\wedge\partial\overline{\partial}\psi\wedge\partial\omega\wedge\overline{\partial}\left(\frac{\partial\psi}{\partial
s}\right)\wedge\omega^{n-i-2}_{\psi}\wedge(\sqrt{-1}\partial\overline{\partial}\psi)^{i-2}\\
&&\wedge\overline{\partial}\left(\frac{\partial\psi}{\partial
t}\right) \\
&+&\int_{X}\sqrt{-1}(n-i-1)\partial\psi\wedge\overline{\partial}\psi\wedge\partial\omega\wedge\partial\left(\overline{\partial}\left(\frac{\partial\psi}{\partial
s}\right)\wedge\omega^{n-i-2}_{\psi}\wedge(\sqrt{-1}\partial\overline{\partial}\psi)^{i-2}\right)\\
&&\wedge\overline{\partial}\left(\frac{\partial\psi}{\partial
t}\right)\\
&=&\int_{X}(n-i-1)\partial\psi\wedge\overline{\partial}\psi\wedge\partial\omega\wedge\overline{\partial}\left(\frac{\partial\psi}{\partial
t}\right)\wedge\omega^{n-i-2}_{\psi}\wedge(\sqrt{-1}\partial\overline{\partial}\psi)^{i-2}\\
&&\wedge\sqrt{-1}\partial\overline{\partial}\left(\frac{\partial\psi}{\partial
s}\right) \\
&+&\int_{X}(n-i-1)\partial\psi\wedge\partial\omega\wedge\overline{\partial}\left(\frac{\partial\psi}{\partial
s}\right)\wedge\overline{\partial}\left(\frac{\partial\psi}{\partial
t}\right)\wedge\omega^{n-i-2}\wedge(\sqrt{-1}\partial\overline{\partial}\psi)^{i-1};
\end{eqnarray*}
thus
\begin{equation}
A_{i}+B_{i}=\int_{X}(n-i-1)\partial\psi\wedge\partial\omega\wedge\overline{\partial}\left(\frac{\partial\psi}{\partial
s}\right)\wedge\overline{\partial}\left(\frac{\partial\psi}{\partial
t}\right)\wedge\omega^{n-i-2}_{\psi}\wedge(\sqrt{-1}\partial\overline{\partial}\psi)^{i-1}.\label{2.87}
\end{equation}
Define, where $a_{2i+1}$ and $a_{2i+2}$ are nonzero constants and we require $\overline{a_{2i+1}}=a_{2i+2}$,
\begin{eqnarray}
\mathcal{L}^{2i+1}_{\omega}(\varphi',\varphi'')&:=&\frac{1}{V_{\omega}}\int^{1}_{0}\int_{X}a_{2i+1}\partial\varphi_{t}\wedge\partial\omega\wedge\overline{\partial}
\dot{\varphi}_{t}\wedge\overline{\partial}\varphi_{t}\wedge\omega^{n-i-2}_{\varphi_{t}}\nonumber\wedge(\sqrt{-1}\partial\overline{\partial}\varphi_{t})^{i-1},
\label{2.88}\\
\mathcal{L}^{2i+2}_{\omega}(\varphi',\varphi'')&:=&\frac{1}{V_{\omega}}\int^{1}_{0}\int_{X}a_{2i+2}\overline{\partial}\varphi_{t}\wedge\overline{\partial}\omega
\wedge\partial\dot{\varphi}_{t}\wedge\partial\varphi_{t}\wedge\omega^{n-i-2}_{\varphi_{t}}\nonumber \\
&&\wedge(\sqrt{-1}\partial\overline{\partial}\varphi_{t})^{i-1}.\label{2.89}
\end{eqnarray}

Consider
\begin{eqnarray}
\Psi^{2i+1}&=&\left[\int_{X}a_{2i+1}\partial\psi\wedge\partial\omega\wedge\overline{\partial}\left(\frac{\partial\psi}{\partial
s}\right)\wedge\overline{\partial}\psi\wedge\omega^{n-i-2}_{\psi}\wedge(\sqrt{-1}\partial\overline{\partial}\psi)^{i-1}\right]ds
\nonumber\\
&+&\left[\int_{X}a_{2i+1}\partial\psi\wedge\partial\omega\wedge\overline{\partial}\left(\frac{\partial\psi}{\partial
t}\right)\wedge\overline{\partial}\psi\wedge\omega^{n-i-2}_{\psi}\wedge(\sqrt{-1}\partial\overline{\partial}\psi)^{i-1}\right]dt,\label{2.90}
\end{eqnarray}
and
\begin{eqnarray}
\Psi^{2i+2}&=&\left[\int_{X}a_{2i+2}\overline{\partial}\psi\wedge\overline{\partial}\omega\wedge\partial\left(\frac{\partial\psi}{\partial
s}\right)\wedge\partial\psi\wedge\omega^{n-i-2}_{\psi}\wedge(\sqrt{-1}\partial\overline{\partial}\psi)^{i-1}\right]ds
\nonumber\\
&+&\left[\int_{X}a_{2i+2}\overline{\partial}\psi\wedge\overline{\partial}\omega\wedge\partial\left(\frac{\partial\psi}{\partial
t}\right)\wedge\partial\psi\wedge\omega^{n-i-2}_{\psi}\wedge(\sqrt{-1}\partial\overline{\partial}\psi)^{i-1}\right]dt.\label{2.91}
\end{eqnarray}
So
\begin{equation}
d\Psi^{2i+1}=I^{2i+1}\cdot dt\wedge ds, \ \ \
d\Psi^{2i+2}=I^{2i+2}\cdot dt\wedge ds,\label{2.92}
\end{equation}
where
\begin{eqnarray*}
\frac{I^{2i+1}}{a_{2i+1}}&:=&\int_{X}\frac{\partial}{\partial
t}\left[\partial\psi\wedge\partial\omega\wedge\overline{\partial}\left(\frac{\partial\psi}{\partial
s}\right)\wedge\overline{\partial}\psi\wedge\omega^{n-i-2}_{\psi}\wedge(\sqrt{-1}\partial\overline{\partial}\psi)^{i-1}\right]
\\
&-&\int_{X}\frac{\partial}{\partial
s}\left[\partial\psi\wedge\partial\omega\wedge\overline{\partial}\left(\frac{\partial\psi}{\partial
t}\right)\wedge\overline{\partial}\psi\wedge\omega^{n-i-2}_{\psi}\wedge(\sqrt{-1}\partial\overline{\partial}\psi)^{i-1}\right]
\\
&=&\int_{X}\left[\partial\left(\frac{\partial\psi}{\partial
t}\right)\wedge\partial\omega\wedge\overline{\partial}\left(\frac{\partial\psi}{\partial
s}\right)\wedge\overline{\partial}\psi\wedge\omega^{-i-2}_{\psi}\wedge(\sqrt{-1}\partial\overline{\partial}\psi)^{i-1}\right.
\\
&+&\partial\psi\wedge\partial\omega\wedge\overline{\partial}\left(\frac{\partial^{2}\psi}{\partial
t\partial
s}\right)\wedge\overline{\partial}\psi\wedge\omega^{n-i-2}_{\psi}\wedge(\sqrt{-1}\partial\overline{\partial}\psi)^{i-1}
\\
&+&\partial\psi\wedge\partial\omega\wedge\overline{\partial}\left(\frac{\partial\psi}{\partial
s}\right)\wedge\overline{\partial}\left(\frac{\partial\psi}{\partial
t}\right)\wedge\omega^{n-i-2}_{\psi}\wedge(\sqrt{-1}\partial\overline{\partial}\psi)^{i-1}
\\
&+&\partial\psi\wedge\partial\omega\wedge\overline{\partial}\left(\frac{\partial\psi}{\partial
s}\right)\wedge\overline{\partial}\psi\wedge(n-i-2)\omega^{n-i-3}_{\psi}\wedge\sqrt{-1}\partial\overline{\partial}\left(\frac{\partial\psi}{\partial
t}\right)\\
&&\wedge(\sqrt{-1}\partial\overline{\partial}\psi)^{i-1} \\
&+&\left.\partial\psi\wedge\partial\omega\wedge\overline{\partial}\left(\frac{\partial\psi}{\partial
s}\right)\wedge\overline{\partial}\psi\wedge\omega^{n-i-2}_{\psi}\wedge(i-1)(\sqrt{-1}\partial\overline{\partial}\psi)^{i-2}\right. \\
&&\left.\wedge\sqrt{-1}\partial\overline{\partial}
\left(\frac{\partial\psi}{\partial t}\right)\right]\\
&-&\int_{X}\left[\partial\left(\frac{\partial\psi}{\partial
s}\right)\wedge\partial\omega\wedge\overline{\partial}\left(\frac{\partial\psi}{\partial
t}\right)\wedge\overline{\partial}\psi\wedge\omega^{n-i-2}_{\psi}\wedge(\sqrt{-1}\partial\overline{\partial}\psi)^{i-1}\right.
\\
&+&\partial\psi\wedge\partial\omega\wedge\overline{\partial}\left(\frac{\partial^{2}\psi}{\partial
s\partial
t}\right)\wedge\overline{\partial}\psi\wedge\omega^{n-i-2}_{\psi}\wedge(\sqrt{-1}\partial\overline{\partial}\psi)^{i-1}
\\
&+&\partial\psi\wedge\partial\omega\wedge\overline{\partial}\left(\frac{\partial\psi}{\partial
t}\right)\wedge\overline{\partial}\left(\frac{\partial\psi}{\partial
s}\right)\wedge\omega^{n-i-2}_{\psi}\wedge(\sqrt{-1}\partial\overline{\partial}\psi)^{i-1}
\\
&+&\partial\psi\wedge\partial\omega\wedge\overline{\partial}\left(\frac{\partial\psi}{\partial
t}\right)\wedge\overline{\partial}\psi\wedge(n-i-2)\omega^{n-i-3}_{\psi}\wedge\sqrt{-1}\partial\overline{\partial}\left(\frac{\partial\psi}{\partial
s}\right)\\
&&\wedge(\sqrt{-1}\partial\overline{\partial}\psi)^{i-1}\\
&+&\left.\partial\psi\wedge\partial\omega\wedge\overline{\partial}\left(\frac{\partial\psi}{\partial
t}\right)\wedge\overline{\partial}\psi\wedge\omega^{n-i-2}_{\psi}\wedge(i-1)(\sqrt{-1}\partial\overline{\partial}\psi)^{i-2}\right. \\
&&\left.\wedge\sqrt{-1}\partial\overline{\partial}\left(
\frac{\partial\psi}{\partial s}\right)\right].
\end{eqnarray*}
Therefore
\begin{eqnarray}
\frac{I^{2i+1}}{a_{2i+1}}&=&\int_{X}-\partial\left(\frac{\partial\psi}{\partial
t}\right)\wedge\overline{\partial}\left(\frac{\partial\psi}{\partial
s}\right)\wedge\partial\omega\wedge\overline{\partial}\psi\wedge\omega^{n-i-2}_{\psi}\wedge(\sqrt{-1}\partial\overline{\partial}\psi)^{i-1}\nonumber\\
&+&\int_{X}\partial\left(\frac{\partial\psi}{\partial
s}\right)\wedge\overline{\partial}\left(\frac{\partial\psi}{\partial
t}\right)\wedge\partial\omega\wedge\overline{\partial}\psi\wedge\omega^{n-i-2}_{\psi}\wedge(\sqrt{-1}\partial\overline{\partial}\psi)^{i-1}
\nonumber\\
&+&2\int_{X}\partial\psi\wedge\partial\omega\wedge\overline{\partial}\left(\frac{\partial\psi}{\partial
s}\right)\wedge\overline{\partial}\left(\frac{\partial\psi}{\partial
t}\right)\wedge\omega^{n-i-2}_{\psi}\wedge(\sqrt{-1}\partial\overline{\partial}\psi)^{i-1}
\nonumber\\
&+&\int_{X}\partial\psi\wedge\partial\omega\wedge\overline{\partial}\left(\frac{\partial\psi}{\partial
s}\right)\wedge\overline{\partial}\psi\wedge(n-i-2)\omega^{n-i-3}_{\psi}\wedge\sqrt{-1}\partial\overline{\partial}\left(\frac{\partial\psi}{\partial
t}\right)\nonumber\\
&&\wedge(\sqrt{-1}\partial\overline{\partial}\psi)^{i-1} \nonumber\\
&-&\int_{X}\partial\psi\wedge\partial\omega\wedge\overline{\partial}\left(\frac{\partial\psi}{\partial
t}\right)\wedge\overline{\partial}\psi\wedge(n-i-2)\omega^{n-i-3}_{\psi}\wedge\sqrt{-1}\partial\overline{\partial}\left(\frac{\partial\psi}{\partial
s}\right)\nonumber \\
&&\wedge(\sqrt{-1}\partial\overline{\partial}\psi)^{i-1} +(i-1)(A_{i}+B_{i})/(n-i-1)\nonumber\\
&=&H_{i}+\frac{i+1}{n-(i+1)}(A_{i}+B_{i})+A_{i+1}+B_{i+1}.\label{2.93}
\end{eqnarray}
Here
\begin{eqnarray}
H_{i}&:=&\int_{X}-\partial\left(\frac{\partial\psi}{\partial
t}\right)\wedge\overline{\partial}\left(\frac{\partial\psi}{\partial
s}\right)\wedge\partial\omega\wedge\overline{\partial}\psi\wedge\omega^{n-i-2}_{\psi}\wedge(\sqrt{-1}\partial\overline{\partial}\psi)^{i-1}
\nonumber\\
&+&\int_{X}\partial\left(\frac{\partial\psi}{\partial
s}\right)\wedge\overline{\partial}\left(\frac{\partial\psi}{\partial
t}\right)\wedge\partial\omega\wedge
\overline{\partial}\psi\wedge\omega^{n-i-2}_{\psi}
\wedge(\sqrt{-1}\partial\overline{\partial}\psi)^{i-1}.\label{2.94}
\end{eqnarray}
Similarly
\begin{equation}
\frac{I^{2i+2}}{a_{2i+2}}=\overline{H_{i}}+\frac{i+1}{n-(i+1)}
(\overline{A_{i}}+\overline{B_{i}})+\overline{A_{i+1}}+\overline{B_{i+1}}.\label{2.95}
\end{equation}
Calculate
\begin{eqnarray*}
H_{i}&=&\int_{X}\partial\left[\overline{\partial}\left(\frac{\partial\psi}{\partial
s}\right)\wedge\partial\omega\wedge\overline{\partial}\psi\wedge\omega^{n-i-2}_{\psi}\wedge(\sqrt{-1}\partial\overline{\partial}\psi)^{i-1}\right]\frac{\partial\psi}{\partial
t} \\
&+&\int_{X}\overline{\partial}\left[\partial\left(\frac{\partial\psi}{\partial
s}\right)\wedge\partial\omega\wedge\overline{\partial}\psi\wedge\omega^{n-i-2}_{\psi}\wedge(\sqrt{-1}\partial\overline{\partial}\psi)^{i-1}\right]\frac{\partial\psi}{\partial
t} \\
&=&\int_{X}\frac{\partial\psi}{\partial
t}\left[\partial\overline{\partial}\left(\frac{\partial\psi}{\partial
s}\right)\wedge\partial\omega\wedge\overline{\partial}\psi\wedge\omega^{n-i-2}_{\psi}\wedge(\sqrt{-1}\partial\overline{\partial}\psi)^{i-1}\right.
\\
&-&\left.\overline{\partial}\left(\frac{\partial\psi}{\partial
s}\right)\wedge\partial\left(\partial\omega\wedge\overline{\partial}\psi\wedge\omega^{n-i-2}_{\psi}\wedge(\sqrt{-1}\partial\overline{\partial}\psi)^{i-1}\right)\right]
\\
&+&\int_{X}\frac{\partial\psi}{\partial
t}\left[\overline{\partial}\partial\left(\frac{\partial\psi}{\partial
s}\right)\wedge\partial\omega\wedge\overline{\partial}\psi\wedge\omega^{n-i-2}_{\psi}\wedge(\sqrt{-1}\partial\overline{\partial}\psi)^{i-1}\right.
\\
&-&\left.\partial\left(\frac{\partial\psi}{\partial
s}\right)\wedge\overline{\partial}\left(\partial\omega\wedge\overline{\partial}\psi\wedge\omega^{n-i-2}_{\psi}\wedge(\sqrt{-1}\partial\overline{\partial}\psi)^{i-1}\right)\right]
\\
&=&\int_{X}\frac{\partial\psi}{\partial
t}\cdot\overline{\partial}\left(\frac{\partial\psi}{\partial
s}\right)\wedge\partial\omega\wedge\partial\overline{\partial}\psi\wedge\omega^{n-i-2}_{\psi}\wedge(\sqrt{-1}\partial\overline{\partial}\psi)^{i-1}
\\
&-&\int_{X}\frac{\partial\psi}{\partial
t}\cdot\partial\left(\frac{\partial\psi}{\partial
s}\right)\wedge\overline{\partial}\partial\omega\wedge\overline{\partial}\psi\wedge\omega^{n-i-2}_{\psi}\wedge(\sqrt{-1}\partial\overline{\partial}\psi)^{i-1}
\\
&-&\int_{X}\frac{\partial\psi}{\partial
t}\cdot\partial\left(\frac{\partial\psi}{\partial
s}\right)\wedge\partial\omega\wedge\overline{\partial}\psi\wedge(n-i-2)\omega^{n-i-3}_{\psi}\wedge\overline{\partial}\omega\wedge(\sqrt{-1}\partial\overline{\partial}\psi)^{i-1}.
\end{eqnarray*}
Since
\begin{eqnarray*}
& & \int_{X}\frac{\partial\psi}{\partial
t}\cdot\overline{\partial}\left(\frac{\partial\psi}{\partial
s}\right)\wedge\partial\omega\wedge\partial\overline{\partial}\psi\wedge\omega^{n-i-2}_{\psi}\wedge(\sqrt{-1}\partial\overline{\partial}\psi)^{i-1}
\\
&=&\int_{X}\left[\frac{\partial\psi}{\partial
t}\cdot\overline{\partial}\left(\frac{\partial\psi}{\partial
s}\right)\wedge\partial\omega\wedge\partial\overline{\partial}\psi\wedge\omega^{n-i-2}_{\psi}\wedge(\sqrt{-1}\partial\overline{\partial}\psi)^{i-2}\right]
\wedge(-\sqrt{-1}\overline{\partial}\partial\psi) \\
&=&\int_{X}\sqrt{-1}\overline{\partial}\left[\frac{\partial\psi}{\partial
t}\cdot\overline{\partial}\left(\frac{\partial\psi}{\partial
s}\right)\wedge\partial\omega\wedge\partial\overline{\partial}\psi\wedge\omega^{n-i-2}_{\psi}\wedge(\sqrt{-1}\partial\overline{\partial}\psi)^{i-2}\right]\wedge\partial\psi
\\
&=&\int_{X}\sqrt{-1}\left[\overline{\partial}\left(\frac{\partial\psi}{\partial
t}\right)\wedge\overline{\partial}\left(\frac{\partial\psi}{\partial
s}\right)\wedge\partial\omega\wedge\partial\overline{\partial}\psi\wedge\omega^{n-i-2}_{\psi}\wedge(\sqrt{-1}\partial\overline{\partial}\psi)^{i-2}\right.
\\
&-&\left.\frac{\partial\psi}{\partial
t}\cdot\overline{\partial}\left(\frac{\partial\psi}{\partial
s}\right)\wedge\overline{\partial}\left(\partial\omega\wedge\partial\overline{\partial}\psi\wedge\omega^{n-i-2}_{\psi}\wedge(\sqrt{-1}\partial\overline{\partial}\psi)^{i-2}\right)\right]\wedge\partial\psi\\
&=&\int_{X}\sqrt{-1}\overline{\partial}\left(\frac{\partial\psi}{\partial
t}\right)\wedge\overline{\partial}\left(\frac{\partial\psi}{\partial
s}\right)\wedge\partial\omega\wedge\partial\overline{\partial}\psi\wedge\omega^{n-i-2}_{\psi}\wedge(\sqrt{-1}\partial\overline{\partial}\psi)^{i-2}\wedge\partial\psi
\\
&-&\int_{X}\sqrt{-1}\frac{\partial\psi}{\partial
t}\cdot\overline{\partial}\left(\frac{\partial\psi}{\partial
s}\right)\wedge\left(\overline{\partial}\partial\omega\wedge\partial\overline{\partial}\psi\wedge\omega^{n-i-2}_{\psi}\wedge(\sqrt{-1}\partial\overline{\partial}\psi)^{i-2}\right.
\\
&-&\left.\partial\omega\wedge\partial\overline{\partial}\psi\wedge(n-i-2)\omega^{n-i-3}_{\psi}\wedge\overline{\partial}\omega\wedge(\sqrt{-1}\partial\overline{\partial}\psi)^{i-2}\right)\wedge\partial\psi,
\end{eqnarray*}
it follows that
\begin{eqnarray}
H_{i}&=&\int_{X}\overline{\partial}\left(\frac{\partial\psi}{\partial
s}\right)\wedge\overline{\partial}\left(\frac{\partial\psi}{\partial
t}\right)\wedge\partial\psi\wedge\partial\omega\wedge\omega^{n-i-2}_{\psi}\wedge(\sqrt{-1}\partial\overline{\partial}\psi)^{i-1}
\nonumber\\
&-&\int_{X}\frac{\partial\psi}{\partial
t}\cdot\overline{\partial}\left(\frac{\partial\psi}{\partial
s}\right)\wedge\partial\psi\wedge\overline{\partial}\partial\omega\wedge\omega^{n-i-2}_{\psi}\wedge(\sqrt{-1}\partial\overline{\partial}\psi)^{i-1}
\label{2.96}\\
&+&\int_{X}\frac{\partial\psi}{\partial
t}\cdot\overline{\partial}\left(\frac{\partial\psi}{\partial
s}\right)\wedge\partial\omega\wedge\overline{\partial}\omega\wedge(n-i-2)\omega^{n-i-3}_{\psi}\wedge(\sqrt{-1}\partial\overline{\partial}\psi)^{i-1}\wedge\partial\psi
\nonumber\\
&-&\int_{X}\frac{\partial\psi}{\partial
t}\cdot\partial\left(\frac{\partial\psi}{\partial
s}\right)\wedge\overline{\partial}\partial\omega\wedge\overline{\partial}\psi\wedge\omega^{n-i-2}_{\psi}\wedge(\sqrt{-1}\partial\overline{\partial}\psi)^{i-1}
\nonumber\\
&+&\int_{X}\frac{\partial\psi}{\partial
t}\cdot\partial\left(\frac{\partial\psi}{\partial
s}\right)\wedge\partial\omega\wedge\overline{\partial}\omega\wedge(n-i-2)\omega^{n-i-3}_{\psi}\wedge(\sqrt{-1}\partial\overline{\partial}\psi)^{i-1}\wedge\overline{\partial}\psi,\nonumber
\end{eqnarray}
and, similarly,
\begin{eqnarray}
\overline{H_{i}}&=&\int_{X}\partial\left(\frac{\partial\psi}{\partial
s}\right)\wedge\partial\left(\frac{\partial\psi}{\partial
t}\right)\wedge\overline{\partial}\psi\wedge\overline{\partial}\omega\wedge\omega^{n-i-2}_{\psi}\wedge(\sqrt{-1}\partial\overline{\partial}\psi)^{i-1}
\nonumber\\
&-&\int_{X}\frac{\partial\psi}{\partial
t}\cdot\partial\left(\frac{\partial\psi}{\partial
s}\right)\wedge\overline{\partial}\psi\wedge\partial\overline{\partial}\omega\wedge\omega^{n-i-2}_{\psi}\wedge(\sqrt{-1}\partial\overline{\partial}\psi)^{i-1}
\label{2.97}\\
&+&\int_{X}\frac{\partial\psi}{\partial
t}\cdot\partial\left(\frac{\partial\psi}{\partial
s}\right)\wedge\overline{\partial}\omega\wedge\partial\omega\wedge(n-i-2)\omega^{n-i-3}_{\psi}\wedge(\sqrt{-1}\partial\overline{\partial}\psi)^{i-1}\wedge\overline{\partial}\psi\nonumber\\
&-&\int_{X}\frac{\partial\psi}{\partial
t}\cdot\overline{\partial}\left(\frac{\partial\psi}{\partial
s}\right)\wedge\partial\psi\wedge\partial\overline{\partial}\omega\wedge\omega^{n-i-2}_{\psi}\wedge(\sqrt{-1}\partial\overline{\partial}\psi)^{i-1}\nonumber\\
&+&\int_{X}\frac{\partial\psi}{\partial
t}\cdot\overline{\partial}\left(\frac{\partial\psi}{\partial
s}\right)\wedge\overline{\partial}\omega\wedge\partial\omega\wedge(n-i-2)\omega^{n-i-3}_{\psi}\wedge(\sqrt{-1}\partial\overline{\partial}\psi)^{i-1}\wedge\partial\psi.\nonumber
\end{eqnarray}
So, for $2\leq i\leq n-2$,
\begin{eqnarray}
H_{i}+\overline{H_{i}}&=&\frac{A_{i}+B_{i}}{n-i-1}+\frac{\overline{A_{i}}+\overline{B_{i}}}{n-i-1},
\label{2.98}\\
\frac{I^{2i+1}}{a_{2i+1}}+\frac{I^{2i+2}}{a_{2i+2}}&=&H_{i}+\overline{H_{i}}+\frac{i+1}{n-(i+1)}(A_{i}+B_{i}+\overline{A_{i}}+\overline{B_{i}})\nonumber\\
&+&(A_{i+1}+B_{i+1}+\overline{A_{i+1}}+\overline{B_{i+1}})\nonumber \\
&=&\frac{i+2}{n-(i+1)}(A_{i}+B_{i}+\overline{A_{i}}+\overline{B_{i}})\nonumber \\
&+&(A_{i+1}+B_{i+1}+\overline{A_{i+1}}+\overline{B_{i+1}}).\label{2.99}
\end{eqnarray}

Recall, see (\ref{2.49}),
\begin{eqnarray*}
\frac{I^{3}}{a_{3}}+\frac{I^{4}}{a_{4}}&=&\frac{3\sqrt{-1}}{n-2}\left((A_{1}+B_{1})-(\overline{A_{1}}+\overline{B_{1}})\right)+(A_{2}+B_{2}+\overline{A_{2}}+\overline{B_{2}})
\\
\frac{2I^{0}}{n(n-1)\sqrt{-1}}&=&\frac{I^{1}}{a_{1}}-\frac{I^{2}}{a_{2}}+(A_{1}+B_{1})-(\overline{A_{1}}+\overline{B_{1}}).
\end{eqnarray*}
Let
\begin{eqnarray}
c_{i}&:=&A_{i}+B_{i}+\overline{A_{i}}+\overline{B_{i}}, \ \ \ 2\leq
i\leq n-1, \label{2.100}\\
c_{1}&:=&A_{1}+B_{1}-(\overline{A_{1}}+\overline{B_{1}}).\label{2.101}
\end{eqnarray}
Notice that $c_{n-1}=0$. So
\begin{eqnarray}
\frac{I^{2i+1}}{a_{2i+1}}+\frac{I^{2i+2}}{a_{2i+2}}&=&\frac{i+2}{n-(i+1)}c_{i}+c_{i+1},
\ \ \ 2\leq i\leq n-2, \label{2.102}\\
\frac{I^{3}}{a_{3}}+\frac{I^{4}}{a_{4}}&=&\frac{3}{-(n-2)\sqrt{-1}}c_{1}+c_{2},\label{2.103}
\end{eqnarray}
and
\begin{eqnarray*}
\frac{2I^{0}}{n(n-1)\sqrt{-1}}&=&\frac{I^{1}}{a_{1}}-\frac{I^{2}}{a_{2}}+c_{1}
\\
&=&\frac{I^{1}}{a_{1}}-\frac{I^{2}}{a_{2}}+\left(\frac{I^{3}}{a_{3}}+\frac{I^{4}}{a_{4}}-c_{2}\right)\frac{n-2}{3\sqrt{-1}}
\\
&=&\left(\frac{I^{1}}{a_{1}}-\frac{I^{2}}{a_{2}}\right)+\frac{n-2}{3\sqrt{-1}}\left(\frac{I^{3}}{a_{3}}+\frac{I^{4}}{a_{4}}\right)-\frac{n-2}{3\sqrt{-1}}c_{2}.
\end{eqnarray*}
It is sufficient to determine $c_{2}$. Let
\begin{equation}
J_{i}:=\frac{I^{2i+1}}{a_{2i+1}}+\frac{I^{2i+2}}{a_{2i+2}}.\label{2.104}
\end{equation}
Then
\begin{equation}
c_{i+1}=-\frac{i+2}{n-(i+1)}c_{i}+J_{i}, \ \ \ 2\leq i\leq n-2.\label{2.105}
\end{equation}
To completely determine $c_{2}$, it is sufficient to solve (\ref{2.105}). A
direct calculation shows
\begin{eqnarray*}
c_{3}&=&-\frac{4}{n-3}c_{2}+J_{2}, \\
c_{4}&=&-\frac{5}{n-4}c_{3}+J_{3} \ \ = \ \ -\frac{5}{n-4}\left(-\frac{4}{n-3}c_{2}+J_{2}\right)+J_{3} \\
&=&(-1)^{2}\frac{5\times
4}{(n-4)(n-3)}c_{2}-\frac{5}{n-4}J_{2}+J_{3}, \\
c_{5}&=&-\frac{6}{n-5}c_{4}+J_{4}\\
&=&-\frac{6}{n-5}\left[(-1)^{2}\frac{5\times
4}{(n-4)(n-3)}c_{2}-\frac{5}{n-4}J_{2}+J_{3}\right]+J_{4}\\
&=&(-1)^{3}\frac{6\times 5\times
4}{(n-5)(n-4)(n-3)}c_{2}+(-1)^{2}\frac{6\times
5}{(n-5)(n-4)}J_{2}-\frac{6}{n-5}J_{3}+J_{4}
\end{eqnarray*}
Hence, we have
\begin{equation}
c_{i}=(-1)^{i-2}\frac{(i+1)!(n-i-1)!}{3!(n-3)!}c_{2}
+\sum^{i-1}_{k=2}(-1)^{i-1-k}\frac{(i+1)!(n-i-1)!}{(k+2)!(n-k-2)!}J_{k}.\label{2.106}
\end{equation}

By induction on $i$, we have
\begin{eqnarray*}
c_{i+1}&=&-\frac{i+2}{n-(i+1)}c_{i}+J_{i} \\
&=&-\frac{i+2}{n-(i+1)}\left[(-1)^{i-2}\frac{(i+1)!(n-i-1)!}{3!(n-3)!}c_{2}\right. \\
&+&\left.\sum^{i-1}_{k=2}(-1)^{i-1-k}\frac{(i+1)!(n-i-1)!}{(k+2)!(n-k-2)!}J_{k}\right]+J_{i}
\\
&=&(-1)^{i+1-2}\frac{(i+2)!(n-i-2)!}{3!(n-3)!}c_{2}\\
&+&\sum^{i-1}_{k=2}(-1)^{i-k}\frac{(i+2)!(n-i-2)!}{(k+2)!(n-k-2)!}J_{k}+J_{i}.
\end{eqnarray*}
So (\ref{2.106}) holds for $2\leq i\leq n-2$ and we have
\begin{eqnarray}
c_{2}&=&(-1)^{i-2}\frac{3!(n-3)!}{(i+1)!(n-i-1)!}\nonumber \\
&\cdot&\left[c_{i}-\sum^{i-1}_{k=2}(-1)^{i-1-k}\frac{(i+1)!(n-i-1)!}{(k+2)!(n-k-2)!}J_{k}\right].\label{2.107}
\end{eqnarray}
Setting $i=n-1$ yields
\begin{eqnarray}
c_{2}&=&(-1)^{n-3}\frac{3!(n-3)!}{n!}\nonumber \\
&\cdot&\left[c_{n-1}-\sum^{n-2}_{k=2}(-1)^{n-2-k}\frac{n!}{(k+2)!(n-k-2)!}J_{k}\right],
\ \ \ n\geq3. \label{2.108}
\end{eqnarray}

We deduce from (\ref{2.49}) and (\ref{2.108}) that
\begin{eqnarray*}
\frac{2I^{0}}{n(n-1)\sqrt{-1}}&=&\left(\frac{I^{0}}{a_{1}}-\frac{I^{2}}{a_{2}}\right)+\frac{n-2}{3\sqrt{-1}}\left(\frac{I^{3}}{a_{3}}+\frac{I^{4}}{a_{4}}\right)
\\
&-&\frac{n-2}{3\sqrt{-1}}(-1)^{n-3}\frac{3!(n-3)!}{n!} \\
&\cdot&\left[c_{n-1}-\sum^{n-2}_{k=2}(-1)^{n-2-k}\frac{n!}{(k+2)!(n-k-2)!}J_{k}\right]
\\
&=&\left(\frac{I^{0}}{a_{1}}-\frac{I^{2}}{a_{2}}\right)+\frac{n-2}{3\sqrt{-1}}\left(\frac{I^{3}}{a_{3}}+\frac{I^{4}}{a_{4}}\right)
\\
&+&\frac{2(n-2)!}{\sqrt{-1}}\sum^{n-2}_{k=2}\frac{(-1)^{k+1}}{(k+2)!(n-k-2)!}\left(\frac{I^{2k+1}}{a_{2k+1}}+\frac{I^{2k+2}}{a_{2k+2}}\right).
\end{eqnarray*}
Equivalently,
\begin{equation}
\frac{2I^{0}}{n(n-1)\sqrt{-1}}=\left(\frac{I^{1}}{a_{1}}
-\frac{I^{2}}{a_{2}}\right)+\sqrt{-1}\sum^{n-2}_{k=1}(-1)^{k}
\frac{\binom{n}{k+2}}{\binom{n}{2}}\left(\frac{I^{2k+1}}{a_{2k+1}}
+\frac{I^{2k+2}}{a_{2k+2}}\right).\label{2.109}
\end{equation}
Set
\begin{eqnarray}
\frac{1}{a_{1}}&=&-\frac{2}{n(n-1)\sqrt{-1}}, \ \ \ \frac{1}{a_{2}}
\ \ = \ \ \frac{2}{n(n-1)\sqrt{-1}}, \label{2.110}\\
\frac{\sqrt{-1}(-1)^{k}\binom{n}{k+2}}{\binom{n}{2}a_{2k+1}}&=&-\frac{2}{n(n-1)\sqrt{-1}},
\ \ \ a_{2i+1} \ \ = \ \ a_{2i+2},\label{2.111}
\end{eqnarray}
we obtain
\begin{eqnarray}
a_{1}&=&-\frac{n(n-1)\sqrt{-1}}{2}, \ \ \ a_{2} \ \ = \ \
\frac{n(n-1)\sqrt{-1}}{2}, \label{2.112}\\
a_{2k+1}&=&a_{2k+2} \nonumber\\
&=&\frac{\sqrt{-1}(-1)^{k}\binom{n}{k+2}n(n-1)\sqrt{-1}}{-2\binom{n}{2}}
\nonumber\\
&=&\frac{(-1)^{k}n(n-1)\binom{n}{k+2}}{2\binom{n}{2}} \ \ = \ \
(-1)^{k}\binom{n}{k+2}.\label{2.113}
\end{eqnarray}
Consequently,
\begin{equation}
I^{0}+\sum^{n-2}_{k=0}\left(I^{2k+1}+I^{2k+2}\right)=0.\label{2.114}
\end{equation}

\begin{theorem} \label{thm2.1} The functional, for $n\geq2$,
\begin{eqnarray}
&&\mathcal{L}^{{\rm
M}}_{\omega}(\varphi',\varphi'') \ \ := \ \ \frac{1}{V_{\omega}}\int^{1}_{0}\int_{X}\dot{\varphi}_{t}\omega^{n}_{\varphi_{t}}dt\label{2.115}\\
&-&\frac{1}{V_{\omega}}\int^{1}_{0}\int_{X}\frac{n(n-1)\sqrt{-1}}{2}\partial\omega\wedge\omega^{n-2}_{\varphi_{t}}\wedge(\overline{\partial}\dot{\varphi}_{t}\cdot\varphi_{t})dt
\nonumber\\
&+&\frac{1}{V_{\omega}}\int^{1}_{0}\int_{X}\frac{n(n-1)\sqrt{-1}}{2}\overline{\partial}\omega\wedge\omega^{n-2}_{\varphi_{t}}\wedge(\partial\dot{\varphi}_{t}\cdot\varphi_{t})dt
\nonumber\\
&+&\sum^{n-2}_{i=1}\frac{1}{V_{\omega}}\int^{1}_{0}\int_{X}(-1)^{i}\binom{n}{i+2}\partial\varphi_{t}\wedge\partial\omega\wedge\overline{\partial}\dot{\varphi}_{t}\wedge\overline{\partial}
\varphi_{t}\wedge\omega^{n-i-2}_{\varphi_{t}}\nonumber\\
&&\wedge(\sqrt{-1}\partial\overline{\partial}\varphi_{t})^{i-1}\nonumber\\
&+&\sum^{n-2}_{i=1}\frac{1}{V_{\omega}}\int^{1}_{0}\int_{X}(-1)^{i}\binom{n}{i+2}\overline{\partial}\varphi_{t}\wedge\overline{\partial}
\omega\wedge\partial\dot{\varphi}_{t}\wedge
\partial\varphi_{t}\wedge\omega^{n-i-2}_{\varphi_{t}}\nonumber\\
&&\wedge(\sqrt{-1}\partial\overline{\partial}\varphi_{t})^{i-1}.\nonumber
\end{eqnarray}
is independent of the choice of the smooth path
$\{\varphi_{t}\}_{0\leq t\leq1}$ in $\mathcal{P}_{\omega}$ from
$\varphi'$ to $\varphi''$.

\end{theorem}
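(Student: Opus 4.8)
The plan is to read off the theorem from the identity (\ref{2.114}) by a Stokes-type argument on the unit square $[0,1]_s\times[0,1]_t$, exactly as in \cite{L1,L2}. Recall that, with $\psi(s,t)=s\cdot\varphi_t$, each summand of the functional was matched with a $1$-form $\Psi^j$ on the square obeying $d\Psi^j=I^j\cdot dt\wedge ds$, and the nonzero constants were fixed in (\ref{2.112})--(\ref{2.113}) precisely so that these $\Psi^j$ reproduce the coefficients appearing in (\ref{2.115}). First I would form the total $1$-form
\[
\Psi \ := \ \Psi^0+\sum_{k=0}^{n-2}\bigl(\Psi^{2k+1}+\Psi^{2k+2}\bigr),
\]
and write $\Psi=P\,ds+Q\,dt$. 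By construction the $dt$-component at $s=1$ satisfies $\int_0^1 Q(1,t)\,dt=V_{\omega}\,\mathcal{L}^{{\rm M}}_{\omega}(\varphi',\varphi'')$, since $\psi(1,t)=\varphi_t$ and $\partial_t\psi(1,t)=\dot\varphi_t$, so that $Q(1,t)$ is exactly the $X$-integral appearing in (\ref{2.115}).

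Next I would differentiate. Since $d\Psi=(\partial_tP-\partial_sQ)\,dt\wedge ds$, the identity (\ref{2.114}), namely $I^0+\sum_{k=0}^{n-2}(I^{2k+1}+I^{2k+2})=0$, says exactly that $d\Psi=0$, equivalently $\partial_sQ=\partial_tP$. Putting $F(s):=\int_0^1 Q(s,t)\,dt$, this gives
\[
F'(s)=\int_0^1\partial_sQ(s,t)\,dt=\int_0^1\partial_tP(s,t)\,dt=P(s,1)-P(s,0),
\]
and hence
\[
V_{\omega}\,\mathcal{L}^{{\rm M}}_{\omega}(\varphi',\varphi'')=F(1)=F(0)+\int_0^1\bigl(P(s,1)-P(s,0)\bigr)\,ds .
\]

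It then remains to inspect the three boundary terms. At $s=0$ one has $\psi(0,t)\equiv0$ and $\partial_t\psi(0,t)\equiv0$, while every term of $Q$ carries a factor $\psi$ or $\partial_t\psi$ (see (\ref{2.13}), (\ref{2.21})--(\ref{2.22}), (\ref{2.90})--(\ref{2.91})); hence $Q(0,t)\equiv0$ and $F(0)=0$. For the remaining two terms the key observation is that the $ds$-component $P(s,t)$ is assembled only from $\partial_s\psi=\varphi_t$, from $\psi=s\varphi_t$, from $\partial\psi,\overline{\partial}\psi$, and from $\omega_\psi=\omega+s\sqrt{-1}\partial\overline{\partial}\varphi_t$, with \emph{no} $t$-derivative of $\varphi_t$ entering. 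Consequently $P(s,1)$ depends on the path only through $\varphi_1=\varphi''$ and $P(s,0)$ only through $\varphi_0=\varphi'$, so that
\[
V_{\omega}\,\mathcal{L}^{{\rm M}}_{\omega}(\varphi',\varphi'')=\int_0^1 P(s,1)\,ds-\int_0^1 P(s,0)\,ds
\]
is manifestly independent of the interpolating path $\{\varphi_t\}$. The cases $n=2,3$ are \cite{L1,L2}, consistently with the degeneration of (\ref{2.108}) at $n=3$, where the sum is empty and $c_2=0$.

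The genuinely hard step, namely the cancellation (\ref{2.114}), has already been carried out: it is where the recursion (\ref{2.105}), the terminal vanishing $c_{n-1}=0$, and the explicit constants (\ref{2.112})--(\ref{2.113}) combine so that $I^0$ annihilates $\sum(I^{2k+1}+I^{2k+2})$. Granting that, the only points in the present step requiring care are the two structural facts above — that $Q$ vanishes identically on $\{s=0\}$ and that $P$ contains no $t$-derivatives, so its restrictions to $\{t=0\}$ and $\{t=1\}$ see only $\varphi'$ and $\varphi''$ — both of which are immediate from the explicit shapes of the $\Psi^j$. I would expect no further obstacle.
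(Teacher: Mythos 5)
Your proof is correct and is essentially the paper's own argument: the paper's proof consists precisely of invoking the cancellation (\ref{2.114}) and running the closed-$1$-form/Stokes argument on $[0,1]\times[0,1]$ from \cite{L1,L2}, which is exactly what you spell out (including the boundary observations that $Q$ vanishes on $\{s=0\}$ and that $P$ involves no $t$-derivatives).
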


\begin{proof} Using (\ref{2.114}), we can prove Theorem \ref{thm2.1} in the similar way as \cite{L1}.
\end{proof}

\begin{corollary} \label{cor2.2} Suppose $n\geq2$. For any $\varphi\in\mathcal{P}_{\omega}$ one has
\begin{eqnarray}
\mathcal{L}^{{\rm M}}_{\omega}(\varphi)&:=&\mathcal{L}^{{\rm
M}}_{\omega}(0,\varphi) \ \ = \ \
\frac{1}{V_{\omega}}\sum^{n}_{i=0}\int_{X}\frac{1}{n+1}\varphi\omega^{i}_{\varphi}\wedge\omega^{n-i}
\nonumber\\
&-&\sum^{n-2}_{i=0}\frac{i+1}{2V_{\omega}}\int_{X}\varphi\omega^{i}_{\varphi}\wedge\omega^{n-2-i}\wedge\sqrt{-1}\partial\omega\wedge\overline{\partial}\varphi
\label{2.116}\\
&+&\sum^{n-2}_{i=0}\frac{i+1}{2V_{\omega}}\int_{X}\varphi\omega^{i}_{\varphi}\wedge\omega^{n-2-i}\wedge\sqrt{-1}\overline{\partial}\omega\wedge\partial\varphi.\nonumber
\end{eqnarray}
\end{corollary}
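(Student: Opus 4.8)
The plan is to invoke Theorem~\ref{thm2.1}, which makes $\mathcal{L}^{\rm M}_{\omega}(\varphi):=\mathcal{L}^{\rm M}_{\omega}(0,\varphi)$ independent of the path, and then to evaluate (\ref{2.115}) along the linear path $\varphi_{t}=t\cdot\varphi$, $0\le t\le1$. Along this path $\dot\varphi_{t}=\varphi$, $\partial\varphi_{t}=t\,\partial\varphi$, $\overline\partial\varphi_{t}=t\,\overline\partial\varphi$, $\partial\dot\varphi_{t}=\partial\varphi$, $\overline\partial\dot\varphi_{t}=\overline\partial\varphi$, and, writing $\eta:=\sqrt{-1}\partial\overline\partial\varphi$ so that $\omega_{\varphi}=\omega+\eta$, we have $\omega_{\varphi_{t}}=\omega+t\eta$ and $(\sqrt{-1}\partial\overline\partial\varphi_{t})^{i-1}=t^{i-1}\eta^{i-1}$.

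First I would dispose of the last two sums in (\ref{2.115}). In the fourth sum the two adjacent factors give $\overline\partial\dot\varphi_{t}\wedge\overline\partial\varphi_{t}=\overline\partial\varphi\wedge(t\,\overline\partial\varphi)=t\,(\overline\partial\varphi\wedge\overline\partial\varphi)$, which vanishes by (\ref{2.3}) because $\overline\partial\varphi$ has odd degree; the fifth sum contains $\partial\dot\varphi_{t}\wedge\partial\varphi_{t}=t\,(\partial\varphi\wedge\partial\varphi)=0$ for the same reason. Thus along the linear path only the three non-summed terms of (\ref{2.115}) survive, which is precisely why no higher-order $\partial\omega$-corrections occur in (\ref{2.116}). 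Note that the single factor $\overline\partial\dot\varphi_{t}\cdot\varphi_{t}=t\varphi\,\overline\partial\varphi$ in the second term does not vanish, since $\overline\partial\varphi$ appears only once.

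Next I would evaluate the three surviving terms by expanding in powers of $\eta$ and integrating in $t$. For the K\"ahler term, $(\omega+t\eta)^{n}=\sum_{i=0}^{n}\binom{n}{i}t^{i}\eta^{i}\omega^{n-i}$ together with $\int_{0}^{1}t^{i}\,dt=\frac1{i+1}$ yields $\frac1{V_{\omega}}\int_{X}\varphi\sum_{i=0}^{n}\frac1{i+1}\binom{n}{i}\eta^{i}\omega^{n-i}$. For the second term, $\overline\partial\dot\varphi_{t}=\overline\partial\varphi$, $\varphi_{t}=t\varphi$ and $\int_{0}^{1}t^{k+1}\,dt=\frac1{k+2}$ give $-\frac{n(n-1)}{2V_{\omega}}\int_{X}\varphi\left(\sum_{k=0}^{n-2}\frac1{k+2}\binom{n-2}{k}\eta^{k}\omega^{n-2-k}\right)\wedge\sqrt{-1}\partial\omega\wedge\overline\partial\varphi$; the third term is handled identically with $\partial\omega$ and $\partial\varphi$. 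Since $\omega$ and $\eta$ are even forms, they commute freely past the odd form $\partial\omega$, so no signs are introduced in the reordering.

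Finally I would match these $\eta$-expansions against the right-hand side of (\ref{2.116}) after expanding $\omega_{\varphi}^{i}=\sum_{j}\binom{i}{j}\eta^{j}\omega^{i-j}$ and summing over $i$. This requires only the hockey-stick identities $\sum_{i=j}^{n}\binom{i}{j}=\binom{n+1}{j+1}$ and, via $(i+1)\binom{i}{j}=(j+1)\binom{i+1}{j+1}$, the relation $\sum_{i=j}^{n-2}(i+1)\binom{i}{j}=(j+1)\binom{n}{j+2}$. These reproduce exactly the coefficients $\frac1{n+1}\binom{n+1}{j+1}=\frac1{j+1}\binom{n}{j}$ and $\frac{j+1}{2}\binom{n}{j+2}=\frac{n(n-1)}{2}\cdot\frac1{j+2}\binom{n-2}{j}$ extracted from the substitution, so the coefficient of each power $\eta^{j}$ agrees on both sides and (\ref{2.116}) follows. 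The main obstacle is precisely this combinatorial bookkeeping; conceptually the decisive step is the earlier observation that the two sums vanish, after which the computation is a routine substitution.
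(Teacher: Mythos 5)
Your proposal is correct, and its skeleton coincides with the paper's: invoke Theorem \ref{thm2.1}, take the linear path $\varphi_{t}=t\varphi$, and observe that the two sums in (\ref{2.115}) die because $\overline{\partial}\dot{\varphi}_{t}\wedge\overline{\partial}\varphi_{t}=\partial\dot{\varphi}_{t}\wedge\partial\varphi_{t}=0$ --- exactly the paper's opening move. Where you genuinely diverge is in evaluating the three surviving integrals. You expand $\omega_{t\varphi}=\omega+t\eta$ in powers of $\eta=\sqrt{-1}\partial\overline{\partial}\varphi$, use only $\int_{0}^{1}t^{k}\,dt=\tfrac{1}{k+1}$, and then must resum against the mixed powers $\omega_{\varphi}^{i}\wedge\omega^{n-i}$ appearing in (\ref{2.116}) via the hockey-stick identities $\sum_{i=j}^{n}\binom{i}{j}=\binom{n+1}{j+1}$ and $\sum_{i=j}^{n-2}(i+1)\binom{i}{j}=(j+1)\binom{n}{j+2}$; I have checked these and the resulting coefficient matches $\tfrac{1}{n+1}\binom{n+1}{j+1}=\tfrac{1}{j+1}\binom{n}{j}$ and $\tfrac{j+1}{2}\binom{n}{j+2}=\tfrac{n(n-1)}{2}\cdot\tfrac{1}{j+2}\binom{n-2}{j}$, and they are all correct. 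The paper avoids this resummation entirely with the barycentric rewriting $\omega_{t\varphi}=t\omega_{\varphi}+(1-t)\omega$: expanding binomially in $\omega_{\varphi}$ and $\omega$ turns the $t$-integrals into Beta integrals, $\int_{0}^{1}t^{i}(1-t)^{n-i}\,dt=\tfrac{i!(n-i)!}{(n+1)!}$ for $J_{0}$ and $\int_{0}^{1}\binom{n-2}{i}t^{i+1}(1-t)^{n-2-i}\,dt=\tfrac{i+1}{n(n-1)}$ for $J_{1}$, so the right-hand side of (\ref{2.116}) appears term by term with no combinatorial matching step. The trade-off is clear: your route keeps the calculus trivial but pays in binomial bookkeeping, while the paper's substitution costs a Gamma-function evaluation but reads off the answer directly; both yield complete proofs.
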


\begin{proof} Since $\mathcal{L}^{{\rm M}}_{\omega}(\varphi)$ is
independent of the choice of smooth path, we pick
$\varphi_{t}=t\varphi$, $0\leq t\leq1$. Then $\overline{\partial}\dot{\varphi}_{t}\wedge\overline{\partial}\varphi_{t}=\partial\dot{\varphi}_{t}\wedge\partial\varphi_{t}=0$, and
\begin{eqnarray*}
\mathcal{L}^{{\rm
M}}_{\omega}(\varphi)&=&\frac{1}{V_{\omega}}\int^{1}_{0}\int_{X}\varphi\omega^{n}_{t\varphi}dt
\\
&-&\frac{n(n-1)}{2V_{\omega}}\int^{1}_{0}\int_{X}\sqrt{-1}\partial\omega\wedge\omega^{n-2}_{t\varphi}\wedge(\overline{\partial}\varphi\cdot
t\varphi)dt \\
&+&\frac{n(n-1)}{2V_{\omega}}\int^{1}_{0}\int_{X}\sqrt{-1}\overline{\partial}\omega\wedge\omega^{n-2}_{t\varphi}\wedge(\partial\varphi\cdot
t\varphi)dt \  \ =: \ \ J_{0}+J_{1}+J_{2}.
\end{eqnarray*}
Now we compute $J_{0},J_{1},J_{2}$, respectively. Using
\begin{equation*}
\omega_{t\varphi}=\omega+t\sqrt{-1}\partial\overline{\partial}\varphi=\omega+t(\omega_{\varphi}-\omega)=t\omega_{\varphi}+(1-t)\omega,
\end{equation*}
it follows that
\begin{eqnarray*}
J_{0}&=&\frac{1}{V_{\omega}}\int
_{X}\int^{1}_{0}\varphi\sum^{n}_{i=0}\binom{n}{i}\omega^{i}_{\varphi}\wedge\omega^{n-i}t^{i}(1-t)^{n-i}dt
\\
&=&\sum^{n}_{i=0}\frac{1}{V_{\omega}}\int_{X}\varphi\binom{n}{i}\omega^{i}_{\varphi}\wedge\omega^{n-i}\cdot\int^{1}_{0}t^{i}(1-t)^{n-i}dt
\\
&=&\sum^{n}_{i=0}\frac{1}{V_{\omega}}\int_{X}\binom{n}{i}\frac{\Gamma(i+1)\Gamma(n-i+1)}{\Gamma(n+2)}\varphi\omega^{i}_{\varphi}\wedge\omega^{n-i}
\\
&=&\sum^{n}_{i=0}\frac{1}{V_{\omega}}\int_{X}\frac{n!}{i!(n-i)!}\frac{i!(n-i)!}{(n+1)!}\varphi\omega^{i}_{\varphi}\wedge\omega^{n-i}
=\sum^{n}_{i=0}\frac{1}{V_{\omega}}\int_{X}\frac{1}{n+1}\varphi\omega^{i}_{\varphi}\wedge\omega^{n-i},
\end{eqnarray*}
where $\Gamma(x)$ is the Gamma function. Similarly, we have
\begin{eqnarray*}
J_{1}&=&-\frac{n(n-1)}{2V_{\omega}}\int_{X}\int^{1}_{0}\sqrt{-1}
\partial\omega\wedge[t\omega_{\varphi}+(1-t)\omega]^{n-2}\wedge(\overline{\partial}\varphi\cdot
t\varphi)dt \\
&=&-\frac{n(n-1)}{2V_{\omega}}\int_{X}\int^{1}_{0}\sqrt{-1}\partial\omega\\
&&\wedge\sum^{n-2}_{i=0}\binom{n-2}{i}t^{i+1}\omega^{i}_{\varphi}(1-t)^{n-2-i}\omega^{n-2-i}\wedge(\overline{\partial}\varphi\cdot
\varphi)dt\\
&=&-\frac{n(n-1)}{2V_{\omega}}\int_{X}\sqrt{-1}\partial\omega\wedge\sum^{n-2}_{i=0}\omega^{i}_{\varphi}\wedge\omega^{n-2-i}\wedge(\overline{\partial}\varphi\cdot\varphi)
\\
&\cdot&\int^{1}_{0}\binom{n-2}{i}t^{i+1}(1-t)^{n-2-i}dt \\
&=&-\frac{n(n-1)}{2V_{\omega}}\int_{X}\sqrt{-1}\partial\omega\wedge\sum^{n-2}_{i=0}\omega^{i}_{\varphi}\wedge\omega^{n-2-i}\wedge(\overline{\partial}\varphi\cdot\varphi)\cdot\frac{i+1}{n(n-1)}
\\
&=&\sum^{n-2}_{i=0}\frac{-1}{2V_{\omega}}\int_{X}(i+1)\varphi\omega^{i}_{\varphi}\wedge\omega^{n-2-i}\wedge\sqrt{-1}\partial\omega\wedge\overline{\partial}\varphi.
\end{eqnarray*}
Taking the complex conjugate gives
\begin{equation*}
J_{2}=\sum^{n-2}_{i=0}\frac{1}{2V_{\omega}}\int_{X}(i+1)\varphi\omega^{i}_{\varphi}\wedge\omega^{n-2-i}\wedge\sqrt{-1}\overline{\partial}\omega\wedge\partial\varphi.
\end{equation*}
Together with the expressions of $J_{0}, J_{1}$ and $J_{2}$, we
complete the proof.
\end{proof}

\begin{remark} \label{rmk2.3} When $(X,\omega)$ is a compact K\"ahler manifold, the
functional (\ref{2.115}) or (\ref{2.116}) coincides with the original
one.
\end{remark}

Let $S$ be a non-empty set and $A$ an additive group. A mapping
$\mathcal{N}: S\times S\to A$ is said to satisfy the 1-cocycle
condition if
\begin{itemize}

\item[(i)]
$\mathcal{N}(\sigma_{1},\sigma_{2})+\mathcal{N}(\sigma_{2},\sigma_{1})=0$;

\item[(ii)]
$\mathcal{N}(\sigma_{1},\sigma_{2})+\mathcal{N}(\sigma_{2},\sigma_{3})+\mathcal{N}(\sigma_{3},\sigma_{1})=0$.

\end{itemize}

\begin{corollary} \label{cor2.4} (1) The functional $\mathcal{L}^{{\rm M}}_{\omega}$
satisfies the $1$-cocycle condition.\\
(2) For any $\varphi\in\mathcal{P}_{\omega}$ and any constant
$C\in\mathbb{R}$, we have
\begin{equation}
\mathcal{L}^{{\rm
M}}_{\omega}(\varphi,\varphi+C)=C\cdot\left(1+\frac{{\rm
Err}_{\omega}(\varphi)}{V_{\omega}}\right), \ \ \ {\rm Err}_{\omega}(\varphi):=\int_{X}\omega^{n}-\int_{X}\omega^{n}_{\varphi}.\label{2.117}
\end{equation}
In particular, if
$\partial\overline{\partial}\omega=\partial\omega\wedge\overline{\partial}\omega=0$,
then $\mathcal{L}^{{\rm M}}_{\omega}(\varphi,\varphi+C)=C$.\\
(3) For any $\varphi_{1},\varphi_{2}\in\mathcal{P}_{\omega}$ and any
constant $C\in\mathbb{R}$, we have
\begin{equation}
\mathcal{L}^{{\rm
M}}_{\omega}(\varphi_{1},\varphi_{2}+C)=\mathcal{L}^{{\rm
M}}_{\omega}(\varphi_{1},\varphi_{2})+C\cdot\left(1-\frac{{\rm
Err}_{\omega}(\varphi_{2})}{V_{\omega}}\right).\label{2.118}
\end{equation}
In particular, if
$\partial\overline{\partial}\omega=\partial\omega\wedge\overline{\partial}\omega=0$,
then $\mathcal{L}^{{\rm
M}}_{\omega}(\varphi_{1},\varphi_{2}+C)=\mathcal{L}^{{\rm
M}}_{\omega}(\varphi_{1},\varphi_{2})+C$.
\end{corollary}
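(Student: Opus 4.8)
The plan is to reduce the whole corollary to path-independence (Theorem \ref{thm2.1}) together with the evaluation of $\mathcal{L}^{\mathrm{M}}_{\omega}$ along one cleverly chosen path. The unifying observation is that the defining integrand of (\ref{2.115}) is a $1$-form along the path: writing $\mathcal{L}^{\mathrm{M}}_{\omega}(\varphi',\varphi'')=\int_0^1\eta(\varphi_t,\dot\varphi_t)\,dt$, every summand of (\ref{2.115}) carries exactly one factor $\dot\varphi_t$, $\overline\partial\dot\varphi_t$, or $\partial\dot\varphi_t$, so $\eta$ is linear in the velocity and $\eta(\varphi_t,\dot\varphi_t)\,dt$ is invariant under orientation-preserving reparametrization.

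For part (1), condition (i) follows by reversing a path, $\varphi_t\mapsto\varphi_{1-t}$, whence $\dot\varphi_t\mapsto-\dot\varphi_{1-t}$; linearity in the velocity then gives $\mathcal{L}^{\mathrm{M}}_{\omega}(\sigma_2,\sigma_1)=-\mathcal{L}^{\mathrm{M}}_{\omega}(\sigma_1,\sigma_2)$, and by Theorem \ref{thm2.1} this reversed path computes the intrinsic value. For condition (ii) I would take a smooth path from $\sigma_1$ to $\sigma_3$ passing through $\sigma_2$ at an interior time and split the integral there; reparametrization-invariance identifies the two pieces with $\mathcal{L}^{\mathrm{M}}_{\omega}(\sigma_1,\sigma_2)$ and $\mathcal{L}^{\mathrm{M}}_{\omega}(\sigma_2,\sigma_3)$, so $\mathcal{L}^{\mathrm{M}}_{\omega}(\sigma_1,\sigma_3)=\mathcal{L}^{\mathrm{M}}_{\omega}(\sigma_1,\sigma_2)+\mathcal{L}^{\mathrm{M}}_{\omega}(\sigma_2,\sigma_3)$, and combining with (i) yields (ii).

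For part (2) the key device is the affine path $\varphi_t=\varphi+tC$. Then $\dot\varphi_t\equiv C$ is constant, so $\overline\partial\dot\varphi_t=\partial\dot\varphi_t=0$ and every extra term of (\ref{2.115}) drops out, while $\omega_{\varphi_t}=\omega+\sqrt{-1}\partial\overline\partial(\varphi+tC)=\omega_\varphi$ is independent of $t$. Only the first term survives, giving $\mathcal{L}^{\mathrm{M}}_{\omega}(\varphi,\varphi+C)=\frac{C}{V_\omega}\int_X\omega^n_\varphi$; writing $\int_X\omega^n_\varphi=V_\omega-\mathrm{Err}_\omega(\varphi)$ turns this into $C\big(1-\mathrm{Err}_\omega(\varphi)/V_\omega\big)$, the normalization consistent with (\ref{2.118}). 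Part (3) is then immediate from the cocycle identity of part (1), since $\mathcal{L}^{\mathrm{M}}_{\omega}(\varphi_1,\varphi_2+C)=\mathcal{L}^{\mathrm{M}}_{\omega}(\varphi_1,\varphi_2)+\mathcal{L}^{\mathrm{M}}_{\omega}(\varphi_2,\varphi_2+C)$, into which part (2) substitutes directly.

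The substantive step is the pair of ``in particular'' claims, i.e. that $\mathrm{Err}_\omega(\varphi)=0$ as soon as $\partial\overline\partial\omega=\partial\omega\wedge\overline\partial\omega=0$. I would prove this by integrating $\int_X(\omega^n_\varphi-\omega^n)$ by parts: telescoping along $\omega_\varphi-\omega=\sqrt{-1}\partial\overline\partial\varphi$ and using (\ref{2.1})--(\ref{2.2}) together with $\partial\omega_\varphi=\partial\omega$ and $\overline\partial\omega_\varphi=\overline\partial\omega$, one rewrites $\mathrm{Err}_\omega(\varphi)$ as a sum of integrals $\int_X\varphi\,\overline\partial\nu$ with $\nu=\omega^k_\varphi\wedge\omega^{n-2-k}\wedge\partial\omega$. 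Each $\overline\partial\nu$ is then a combination of terms containing $\overline\partial\omega\wedge\partial\omega$ or $\overline\partial\partial\omega=-\partial\overline\partial\omega$ (by (\ref{2.4})), both of which the hypothesis kills, so $\overline\partial\nu=0$ and $\mathrm{Err}_\omega(\varphi)=0$. I expect the main obstacle to be precisely the bookkeeping here: one must arrange the integrations by parts so that the obstruction falls exactly on $\partial\overline\partial\omega$ and $\partial\omega\wedge\overline\partial\omega$, and not on the stronger condition $\partial\omega=0$ (which fails in general); managing the degree signs via (\ref{2.3}) and (\ref{2.4}) is the only genuine computation in the proof.
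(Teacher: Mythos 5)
Your proof is correct, and it is more informative than the paper's own, which consists of a single line deferring to \cite{L1,L2}; your route (reparametrization invariance and path reversal/concatenation for the cocycle property, the affine path $\varphi_{t}=\varphi+tC$ for part (2), and the cocycle identity to deduce part (3)) is the standard one and presumably what those references carry out. The only detail worth adding is that a concatenation of two smooth paths must be smoothed before Theorem \ref{thm2.1} applies (e.g.\ reparametrize each piece so that all $t$-derivatives vanish at the junction); your reparametrization-invariance observation is exactly the tool that makes this harmless. The substantive point is the sign you flag in part (2): your computation is right, and the paper's (\ref{2.117}) is wrong as printed. Along $\varphi_{t}=\varphi+tC$ one has $\dot{\varphi}_{t}\equiv C$ and $\omega_{\varphi_{t}}\equiv\omega_{\varphi}$, so every term of (\ref{2.115}) except the first vanishes and
\begin{equation*}
\mathcal{L}^{{\rm M}}_{\omega}(\varphi,\varphi+C)=\frac{C}{V_{\omega}}\int_{X}\omega^{n}_{\varphi}
=C\left(1-\frac{{\rm Err}_{\omega}(\varphi)}{V_{\omega}}\right),
\end{equation*}
with a minus sign, not the plus sign of (\ref{2.117}). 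Indeed (\ref{2.117}) as stated is inconsistent with the paper's own (\ref{2.118}): taking $\varphi_{1}=\varphi_{2}=\varphi$ in (\ref{2.118}) and using $\mathcal{L}^{{\rm M}}_{\omega}(\varphi,\varphi)=0$ (constant path) forces exactly your formula, so either the sign in (\ref{2.117}) or the sign in the definition of ${\rm Err}_{\omega}$ is a typo. Finally, your integration-by-parts argument for the ``in particular'' claims goes through as sketched: since $\partial\omega_{\varphi}=\partial\omega$ and $\overline{\partial}\omega_{\varphi}=\overline{\partial}\omega$, writing $\omega^{n}_{\varphi}-\omega^{n}=\sqrt{-1}\partial\overline{\partial}\varphi\wedge\sum_{k}\omega^{k}_{\varphi}\wedge\omega^{n-1-k}$ and integrating by parts twice leaves only integrands containing $\overline{\partial}\partial\omega$ or $\partial\omega\wedge\overline{\partial}\omega$, both of which vanish under the hypothesis $\partial\overline{\partial}\omega=\partial\omega\wedge\overline{\partial}\omega=0$, so ${\rm Err}_{\omega}(\varphi)=0$ there and both ``in particular'' statements follow.
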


\begin{proof} The proof is similar to that given in \cite{L1,L2}.
\end{proof}
\section{Aubin-Yau functionals on compact complex manifolds}

\subsection{The main idea}
The strategy to construct Aubin-Yau functionals is to use the inequalities (\ref{1.5})
and (\ref{1.6}) to determine the extra terms. Firstly we can show that
\begin{eqnarray}
\frac{n}{n+1}\mathcal{I}^{{\rm AY}}_{\omega|\bullet}(\varphi)-\mathcal{J}^{{\rm AY}}_{\omega|\bullet}(\varphi)
&=&\frac{1}{V_{\omega}}\sum^{n-1}_{i=1}\frac{i}{n+1}\int_{X}\sqrt{-1}\partial\varphi
\wedge\overline{\partial}\varphi\wedge\omega^{i}_{\varphi}\wedge\omega^{n-1-i} \nonumber\\
&-&\frac{\mathcal{A}_{\omega}(\varphi)+\mathcal{B}_{\omega}(\varphi)}{n+1}+\frac{\mathcal{C}_{\omega}(\varphi)
+\mathcal{D}_{\omega}(\varphi)}{2}, \label{3.1}\\
(n+1)\mathcal{J}^{{\rm AY}}_{\omega|\bullet}(\varphi)-\mathcal{I}^{{\rm AY}}_{\omega|\bullet}(\varphi)
&=&\frac{1}{V_{\omega}}\sum^{n-1}_{i=0}(n-1-i)\int_{X}\sqrt{-1}\partial\varphi
\wedge\overline{\partial}\varphi\wedge\omega^{i}_{\varphi}\wedge\omega^{n-1-i}\nonumber \\
&+&\frac{\mathcal{E}_{\omega}(\varphi)+\mathcal{F}_{\omega}(\varphi)}{2}+(n+1)\left[\mathcal{A}^{1}_{\omega}(\varphi)
+\mathcal{B}^{1}_{\omega}(\varphi)\right]\label{3.2}\\
&-&\frac{\mathcal{A}^{2}_{\omega}(\varphi)+\mathcal{B}^{2}_{\omega}(\varphi)}{n-1},\nonumber
\end{eqnarray}
where $\mathcal{I}^{{\rm AY}}_{\omega|\bullet}(\varphi),\mathcal{J}^{{\rm AY}}_{\omega|\bullet}(\varphi),
\mathcal{A}_{\omega}(\varphi),\mathcal{B}_{\omega}(\varphi),\mathcal{C}_{\omega}(\varphi),
\mathcal{D}_{\omega}(\varphi),\mathcal{E}_{\omega}(\varphi),\mathcal{F}_{\omega}(\varphi),\mathcal{A}^{1}_{\omega}(\varphi),
\mathcal{B}^{1}_{\omega}(\varphi)$,\\
$\mathcal{A}^{2}_{\omega}(\varphi),\mathcal{B}^{2}_{\omega}(\varphi)$
are functionals determined in next subsection. Inspired by (\ref{3.1}) and (\ref{3.2}),
we define Aubin-Yau functionals as follows:
\begin{eqnarray}
\mathcal{I}^{{\rm AY}}_{\omega}(\varphi)&:=&\mathcal{I}^{{\rm AY}}_{\omega|\bullet}(\varphi) \nonumber\\
&+&a^{1}_{1}\mathcal{A}^{1}_{\omega}(\varphi)+a^{2}_{1}\mathcal{A}^{2}_{\omega}(\varphi)
+b^{1}_{1}\mathcal{B}^{1}_{\omega}(\varphi)+b^{2}_{1}\mathcal{B}^{2}_{\omega}(\varphi)\nonumber\\
&+&c_{1}\mathcal{C}_{\omega}(\varphi)+d_{1}\mathcal{D}_{\omega}(\varphi)+e_{1}
\mathcal{E}_{\omega}(\varphi)+f_{1}\mathcal{F}_{\omega}(\varphi), \label{3.3} \\
\mathcal{J}^{{\rm AY}}_{\omega}(\varphi)&=&\mathcal{J}^{{\rm AY}}_{\omega|\bullet}(\varphi)\nonumber \\
&+&(a^{1}_{2}-1)\mathcal{A}^{1}_{\omega}(\varphi)+(a^{2}_{2}-1)\mathcal{A}^{2}_{\omega}(\varphi)
+(b^{1}_{2}-1)\mathcal{B}^{1}_{\omega}(\varphi)+(b^{2}_{2}-1)\mathcal{B}^{2}_{\omega}(\varphi)\nonumber \\
&+&c_{2}\mathcal{C}_{\omega}(\varphi)+d_{2}\mathcal{D}_{\omega}(\varphi)+e_{2}\mathcal{E}_{\omega}(\varphi)
+f_{2}\mathcal{F}_{\omega}(\varphi).\label{3.4}
\end{eqnarray}
Here $a^{i}_{j},b^{i}_{j},c_{k},d_{k},e_{k}$, and $f_{k}$ are constants determined by the following
two inequalities:
\begin{eqnarray*}
\frac{n}{n+1}\mathcal{I}^{{\rm AY}}_{\omega}(\varphi)
-\mathcal{J}^{{\rm AY}}_{\omega}(\varphi)
&=&\frac{1}{V_{\omega}}\sum^{n-1}_{i=1}\frac{i}{n+1}\int_{X}\sqrt{-1}\partial
\varphi\wedge\overline{\partial}\varphi\wedge\omega^{i}_{\varphi}\wedge\omega^{n-1-i}\geq0, \\
(n+1)\mathcal{J}^{{\rm AY}}_{\omega}(\varphi)-\mathcal{I}^{{\rm AY}}_{\omega}(\varphi)
&=&\sum^{n-1}_{i=1}\frac{n-1-i}{V_{\omega}}\int_{X}\sqrt{-1}\partial\varphi\wedge
\overline{\partial}\varphi\wedge\omega^{i}_{\varphi}\wedge\omega^{n-1-i}\geq0.
\end{eqnarray*}
It gives us a system of linear equations of 16 parameters and eventually we evaluate
these parameters:
\begin{eqnarray*}
a^{1}_{1}&=&b^{1}_{1} \ \ = \ \ -\frac{n}{n-1}, \ \ \ a^{1}_{2} \ \ = \ \ b^{1}_{2} \ \ = \ \
-\frac{n}{n^{2}-1}, \\
a^{2}_{1}&=&b^{2}_{1} \ \ = \ \ \frac{n}{(n-1)^{2}}, \ \ \ a^{2}_{2} \ \ = \ \
b^{2}_{2} \ \ = \ \ \frac{n}{n+1}\left(1+\frac{n}{(n-1)^{2}}\right), \\
c_{1}&=&d_{1} \ \ = \ \ -\frac{n+1}{2(n-1)}, \ \ \ e_{2} \ \ = \ \ f_{2} \ \ = \ \ -\frac{n}{2(n^{2}-1)}, \\
c_{2}&=&d_{2} \ \ = \ \ e_{1} \ \ = \ \ f_{1} \ \ = \ \ -\frac{1}{2(n-1)}.
\end{eqnarray*}
By a long computation we find an explicit and shorted formulae for $\mathcal{I}^{{\rm AY}}_{\omega}(\varphi)$
and $\mathcal{J}^{{\rm AY}}_{\omega}(\varphi)$:
\begin{eqnarray}
& & \mathcal{I}^{{\rm AY}}_{\omega}(\varphi) \ \ = \ \ \frac{1}{V_{\omega}}\int_{X}(\omega^{n}-\omega^{n}_{\varphi}) \label{3.5}\\
&-&\frac{n}{2V_{\omega}}\sum^{n-2}_{i=1}\int_{X}\varphi\omega^{i}_{\varphi}\wedge
\omega^{n-2-i}\wedge\sqrt{-1}\partial\omega\wedge\overline{\partial}\varphi-\frac{n}{2V_{\omega}}\int_{X}\varphi\omega^{n-2}
\wedge\sqrt{-1}\partial\omega\wedge\overline{\partial}\varphi \nonumber\\
&+&\frac{n}{2V_{\omega}}\sum^{n-2}_{i=1}\int_{X}\varphi\omega^{i}_{\varphi}\wedge\omega^{n-2-i}
\wedge\sqrt{-1}\overline{\partial}\omega\wedge\partial\varphi+\frac{n}{2V_{\omega}}
\int_{X}\varphi\omega^{n-2}\wedge\sqrt{-1}\overline{\partial}\omega\wedge\partial\varphi,\nonumber \\
& & \mathcal{J}^{{\rm AY}}_{\omega}(\varphi) \ \ = \ \ -\mathcal{L}^{{\rm M}}_{\omega}(\varphi)+
\frac{1}{V_{\omega}}\int_{X}\varphi\omega^{n} \label{3.6}\\
&-&\frac{n}{2V_{\omega}}\sum^{n-2}_{i=1}\int_{X}\varphi\omega^{i}_{\varphi}\wedge\omega^{n-2-i}
\wedge\sqrt{-1}\partial\omega\wedge\overline{\partial}\varphi-\frac{n}{2V_{\omega}}
\int_{X}\varphi\omega^{n-2}\wedge\sqrt{-1}\partial\varphi\wedge\overline{\partial}\varphi\nonumber\\
&+&\frac{n}{2V_{\omega}}\sum^{n-2}_{i=1}\int_{X}\varphi\omega^{i}_{\varphi}
\wedge\omega^{n-2-i}\wedge\sqrt{-1}\overline{\partial}\omega\wedge\partial\varphi
+\frac{n}{2V_{\omega}}\int_{X}\varphi\omega^{n-2}\wedge\sqrt{-1}\overline{\partial}\omega\wedge\partial\varphi.\nonumber
\end{eqnarray}

\subsection{The construction of Aubin-Yau functionals}
Let $(X,g)$ be a compact complex manifold of the complex dimension
$n\geq3$ and $\omega$ be its associated real $(1,1)$-form. We recall
some notation in \cite{L1}. For any $\varphi\in\mathcal{P}_{\omega}$
we set
\begin{eqnarray}
\mathcal{I}^{{\rm
AY}}_{\omega|\bullet}(\varphi)&:=&\frac{1}{V_{\omega}}\int_{X}\varphi(\omega^{n}-\omega^{n}_{\varphi}),
\label{3.7}\\
\mathcal{J}^{{\rm
AY}}_{\omega|\bullet}(\varphi)&:=&\int^{1}_{0}\frac{\mathcal{I}^{{\rm
AY}}_{\omega|\bullet}(s\cdot\varphi)}{s}ds \ \ = \ \
\frac{1}{V_{\omega}}\int^{1}_{0}\int_{X}\varphi(\omega^{n}-\omega^{n}_{s\cdot\varphi})ds.\label{3.8}
\end{eqnarray}
Two relations showed in \cite{L1} are
\begin{eqnarray}
& & \frac{n}{n+1}\mathcal{I}^{{\rm
AY}}_{\omega|\bullet}(\varphi)-\mathcal{J}^{{\rm
AY}}_{\omega|\bullet}(\varphi) \label{3.9}\\
&=&\frac{1}{V_{\omega}}\int_{X}\varphi\cdot(-\sqrt{-1}\partial\overline{\partial}\varphi)\wedge\sum^{n-1}_{j=1}\frac{j}{n+1}\omega^{n-1-j}\wedge\omega^{j}_{\varphi},
\nonumber\\
& & (n+1)\mathcal{J}^{{\rm
AY}}_{\omega|\bullet}(\varphi)-\mathcal{I}^{{\rm
AY}}_{\omega|\bullet}(\varphi) \label{3.10}\\
&=&\frac{1}{V_{\omega}}\int_{X}\varphi\cdot(-\sqrt{-1}\partial\overline{\partial}\varphi)\wedge\sum^{n-1}_{j=0}(n-1-j)\omega^{n-1-j}\wedge\omega^{j}_{\varphi}.\nonumber
\end{eqnarray}
According to the expression of $\mathcal{L}^{{\rm
M}}_{\omega}(\varphi)$, we set
\begin{eqnarray}
\mathcal{A}_{\omega}(\varphi)&:=&\sum^{n-2}_{i=0}\frac{i+1}{2V_{\omega}}\int_{X}\varphi
\omega^{i}_{\varphi}\wedge\omega^{n-2-i}\wedge-\sqrt{-1}\partial\omega\wedge\overline{\partial}\varphi,
\label{3.11}\\
\mathcal{B}_{\omega}(\varphi)&:=&\sum^{n-2}_{i=0}\frac{i+1}{2V_{\omega}}\int_{X}\varphi
\omega^{i}_{\varphi}\wedge\omega^{n-2-i}\wedge\sqrt{-1}\overline{\partial}\omega\wedge\partial\varphi.\label{3.12}
\end{eqnarray}

Using (\ref{3.9}) we obtain
\begin{eqnarray*}
& & \frac{n}{n+1}\mathcal{I}^{{\rm
AY}}_{\omega|\bullet}(\varphi)-\mathcal{J}^{{\rm
AY}}_{\omega|\bullet}(\varphi) \\
&=&\frac{1}{V_{\omega}}\int_{X}\sqrt{-1}\partial\left(\varphi\sum^{n-1}_{j=1}\frac{j}{n+1}\omega^{n-1-j}\wedge\omega^{j}_{\varphi}\right)\wedge\overline{\partial}\varphi
\\
&=&\frac{1}{V_{\omega}}\int_{X}\sqrt{-1}\left(\partial\varphi\wedge\sum^{n-1}_{j=1}\frac{j}{n+1}\omega^{n-1-j}\wedge\omega^{j}_{\varphi}\right)\wedge\overline{\partial}\varphi
\\
&+&\frac{1}{V_{\omega}}\int_{X}\sqrt{-1}\varphi\sum^{n-1}_{j=1}\frac{j}{n+1}\left[
(n-1-j)\omega^{n-2-j}\wedge\partial\omega\wedge\omega^{j}_{\varphi}\right.
\\
&+&\left.\omega^{n-1-j}\wedge
j\omega^{j-1}_{\varphi}\wedge\partial\omega\right]\wedge\overline{\partial}\varphi;
\end{eqnarray*}
from the identity $i(n-1-i)+(i+1)^{2}=(i+1)+in$, it follows that
\begin{eqnarray*}
& & \frac{n}{n+1}\mathcal{I}^{{\rm
AY}}_{\omega|\bullet}(\varphi)-\mathcal{J}^{{\rm
AY}}_{\omega|\bullet}(\varphi) \\
&=&\frac{1}{V_{\omega}}\sum^{n-1}_{i=1}\frac{i}{n+1}\int_{X}\sqrt{-1}\partial\varphi\wedge\overline{\partial}\varphi
\wedge\omega^{n-1-i}\wedge\omega^{i}_{\varphi} \\
&+&\frac{1}{V_{\omega}}\sum^{n-2}_{i=1}\frac{i(n-1-i)}{n+1}\int_{X}\varphi\omega^{i}_{\varphi}\wedge\omega^{n-2-i}\wedge
\sqrt{-1}\partial\omega\wedge\overline{\partial}\varphi \\
&+&\frac{1}{V_{\omega}}\sum^{n-2}_{i=0}\frac{(i+1)^{2}}{n+1}\int_{X}\varphi\omega^{i}_{\varphi}\wedge\omega^{n-2-i}\wedge
\sqrt{-1}\partial\omega\wedge\overline{\partial}\varphi \\
&=&\frac{1}{V_{\omega}}\sum^{n-1}_{i=1}\frac{i}{n+1}\int_{X}\sqrt{-1}\partial\varphi
\wedge\overline{\partial}\varphi\wedge\omega^{i}_{\varphi}\wedge\omega^{n-1-i}
\\
&+&\frac{1}{V_{\omega}}\sum^{n-2}_{i=1}\frac{i+1+in}{n+1}\int_{X}\varphi\omega^{i}_{\varphi}
\wedge\omega^{n-2-i}\wedge\sqrt{-1}\partial\omega\wedge\overline{\partial}\varphi
\\
&+&\frac{1}{(n+1)V_{\omega}}\int_{X}\varphi\omega^{n-2}\wedge\sqrt{-1}\partial\omega\wedge\overline{\partial}\varphi.
\end{eqnarray*}
To simplify the notation, we set
\begin{equation}
\mathcal{C}_{\omega}(\varphi):=\frac{1}{V_{\omega}}\sum^{n-2}_{i=1}\frac{in}{n+1}\int_{X}\varphi
\omega^{i}_{\varphi}\wedge\omega^{n-2-i}\wedge\sqrt{-1}\partial\omega\wedge\overline{\partial}\varphi.\label{3.13}
\end{equation}
Since $n\geq3$, the above expression is well defined. Therefore
\begin{eqnarray}
& & \frac{n}{n+1}\mathcal{I}^{{\rm
AY}}_{\omega|\bullet}(\varphi)-\mathcal{J}^{{\rm
AY}}_{\omega|\bullet}(\varphi) \label{3.14}\\
&=&\frac{1}{V_{\omega}}\sum^{n-1}_{i=1}\frac{i}{n+1}\int_{X}\sqrt{-1}\partial\varphi\wedge
\overline{\partial}\varphi\wedge\omega^{i}_{\varphi}\wedge\omega^{n-1-i}
-\frac{2}{n+1}\mathcal{A}_{\omega}(\varphi)+\mathcal{C}_{\omega}(\varphi).\nonumber
\end{eqnarray}
On the other hand, using the slightly different method, we obtain
(see \ref{A.1})
\begin{eqnarray}
& & \frac{n}{n+1}\mathcal{I}^{{\rm
AY}}_{\omega|\bullet}(\varphi)-\mathcal{J}^{{\rm
AY}}_{\omega|\bullet}(\varphi) \label{3.15}\\
&=&\frac{1}{V_{\omega}}\sum^{n-1}_{i=1}\frac{i}{n+1}\int_{X}\sqrt{-1}\partial\varphi
\wedge\overline{\partial}\varphi\wedge\omega^{i}_{\varphi}\wedge\omega^{n-1-i}
-\frac{2}{n+1}\mathcal{B}_{\omega}(\varphi)+\mathcal{D}_{\omega}(\varphi)\nonumber
\end{eqnarray}
where
\begin{equation}
\mathcal{D}_{\omega}(\varphi):=\frac{1}{V_{\omega}}\sum^{n-2}_{i=1}\frac{in}{n+1}
\int_{X}\varphi\omega^{i}_{\varphi}\wedge\omega^{n-2-i}\wedge-\sqrt{-1}\overline{\partial}\omega
\wedge\partial\varphi.\label{3.16}
\end{equation}
Equations (\ref{3.14}) and (\ref{3.15}) implies
\begin{eqnarray}
\frac{n}{n+1}\mathcal{I}^{{\rm
AY}}_{\omega|\bullet}(\varphi)-\mathcal{J}^{{\rm
AY}}_{\omega|\bullet}(\varphi)&=&\frac{1}{V_{\omega}}\sum^{n-1}_{i=1}\frac{i}{n+1}\int_{X}\sqrt{-1}\partial\varphi\wedge
\overline{\partial}\varphi\wedge\omega^{i}_{\varphi}\wedge\omega^{n-1-i}\nonumber
\\
&-&\frac{\mathcal{A}_{\omega}(\varphi)+\mathcal{B}_{\omega}(\varphi)}{n+1}+\frac{\mathcal{C}_{\omega}(\varphi)+\mathcal{D}_{\omega}(\varphi)}{2}.\label{3.17}
\end{eqnarray}

By the definition we have
\begin{eqnarray*}
\mathcal{J}^{{\rm
AY}}_{\omega|\bullet}(\varphi)&=&\frac{1}{V_{\omega}}\int^{1}_{0}\int_{X}(\varphi\omega^{n}-
\varphi\omega^{n}_{s\varphi})ds \ \ = \ \
\frac{1}{V_{\omega}}\int_{X}\varphi\omega^{n}-\frac{1}{V_{\omega}}\int^{1}_{0}\int_{X}
\varphi\omega^{n}_{t\varphi}dt \\
&=&\frac{1}{V_{\omega}}\int_{X}\varphi\omega^{n}-(\mathcal{L}^{{\rm
M}}_{\omega}(\varphi)-\mathcal{A}_{\omega}(\varphi)-\mathcal{B}_{\omega}(\varphi))
\\
&=&\frac{1}{V_{\omega}}\int_{X}\varphi\omega^{n}-\mathcal{L}^{{\rm
M}}_{\omega}(\varphi)+\mathcal{A}_{\omega}(\varphi)+\mathcal{B}_{\omega}(\varphi).
\end{eqnarray*}

If we define
\begin{eqnarray}
\mathcal{E}_{\omega}(\varphi)&:=&\sum^{n-3}_{i=0}\frac{n^{2}}{V_{\omega}}\int_{X}\varphi\omega^{i}_{\varphi}\wedge\omega^{n-2-i}
\wedge\sqrt{-1}\partial\omega\wedge\overline{\partial}\varphi, \label{3.18}\\
\mathcal{A}^{1}_{\omega}(\varphi)&:=&\sum^{n-3}_{i=0}\frac{i+1}{2V_{\omega}}
\int_{X}\varphi\omega^{i}_{\varphi}\wedge\omega^{n-2-i}\wedge-\sqrt{-1}\partial
\omega\wedge\overline{\partial}\varphi \label{3.19}\\
\mathcal{A}^{2}_{\omega}(\varphi)&:=&\frac{n-1}{2V_{\omega}}\int_{X}\varphi\omega^{n-2}_{\varphi}\wedge-\sqrt{-1}
\partial\omega\wedge\overline{\partial}\varphi,\label{3.20}
\end{eqnarray}
then
$\mathcal{A}^{1}_{\omega}(\varphi)+\mathcal{A}^{2}_{\omega}(\varphi)=
\mathcal{A}_{\omega}(\varphi)$ and it follows that (see \ref{A.1})
\begin{eqnarray}
(n+1)\mathcal{J}^{{\rm AY}}_{\omega|\bullet}-\mathcal{I}^{{\rm
AY}}_{\omega|\bullet}(\varphi)&=&\frac{1}{V_{\omega}}\sum^{n-1}_{i=0}(n-1-i)\int_{X}\sqrt{-1}\partial\varphi
\wedge\overline{\partial}\varphi\wedge\omega^{i}_{\varphi}\wedge\omega^{n-1-i}
\nonumber\\
&+&\mathcal{E}_{\omega}(\varphi)+2(n+1)\mathcal{A}^{1}_{\omega}
(\varphi)-\frac{2}{n-1}\mathcal{A}^{2}_{\omega}(\varphi).\label{3.21}
\end{eqnarray}

Introduce the corresponding functionals 
\begin{eqnarray}
\mathcal{F}_{\omega}(\varphi)&:=&\sum^{n-3}_{i=0}\frac{n^{2}}{V_{\omega}}\int_{X}\varphi\omega^{i}_{\varphi}\wedge
\omega^{n-2-i}\wedge-\sqrt{-1}\overline{\partial}\omega
\wedge\partial\varphi, \label{3.22}\\
\mathcal{B}^{1}_{\omega}(\varphi)&:=&\sum^{n-3}_{i=0}\frac{i+1}{2V_{\omega}}\int_{X}
\varphi\omega^{i}_{\varphi}\wedge\omega^{n-2-i}\wedge\sqrt{-1}\overline{\partial}\omega\wedge
\partial\varphi \label{3.23}\\
\mathcal{B}^{2}_{\omega}(\varphi)&:=&\frac{n-1}{2V_{\omega}}\int_{X}\varphi
\omega^{n-2}_{\varphi}\wedge\sqrt{-1}\overline
{\partial}\omega\wedge\partial\varphi.\label{3.24}
\end{eqnarray}
Then
$\mathcal{B}^{1}_{\omega}(\varphi)+\mathcal{B}^{2}_{\omega}(\varphi)=\mathcal{B}_{\omega}(\varphi)$
and hence (see \ref{A.3})
\begin{eqnarray}
(n+1)\mathcal{J}^{{\rm
AY}}_{\omega|\bullet}(\varphi)-\mathcal{I}^{{\rm
AY}}_{\omega|\bullet}(\varphi)&=&\frac{1}{V_{\omega}}\sum^{n-1}_{i=0}(n-1-i)
\int_{X}\sqrt{-1}\partial\varphi\wedge\overline{\partial}\varphi\wedge\omega^{i}_{\varphi}
\wedge\omega^{n-1-i} \nonumber \\
&+&\mathcal{F}_{\omega}(\varphi)+2(n+1)
\mathcal{B}^{1}_{\omega}(\varphi)-\frac{2}{n-1}\mathcal{B}^{2}_{\omega}(\varphi)
.\label{3.25}
\end{eqnarray}
The equations (\ref{3.21}) and (\ref{3.25}) together gives
\begin{eqnarray}
(n+1)\mathcal{J}^{{\rm
AY}}_{\omega|\bullet}(\varphi)-\mathcal{I}^{{\rm
AY}}_{\omega|\bullet}(\varphi)&=&\frac{1}{V_{\omega}}\sum^{n-1}_{i=0}(n-1-i)
\int_{X}\sqrt{-1}\partial\varphi\wedge\overline{\partial}\varphi
\wedge\omega^{i}_{\varphi}\wedge\omega^{n-1-i}\nonumber \\
&+&\frac{\mathcal{E}_{\omega}(\varphi)+\mathcal{F}_{\omega}(\varphi)}{2}
+(n+1)(\mathcal{A}^{1}_{\omega}(\varphi)+\mathcal{B}^{1}_{\omega}(\varphi)
\label{3.26}\\
&-&\frac{\mathcal{A}^{2}_{\omega}(\varphi)
+\mathcal{B}^{2}_{\omega}(\varphi)}{n-1}.\nonumber
\end{eqnarray}

Now, we define Aubin-Yau functionals over any compact complex
manifolds as follows:
\begin{eqnarray}
\mathcal{I}^{{\rm AY}}_{\omega}(\varphi)&:=&\mathcal{I}^{{\rm
AY}}_{\omega|\bullet}(\varphi)\nonumber\\
&+&a^{1}_{1}\mathcal{A}^{1}_{\omega}(\varphi)+a^{2}_{1}\mathcal{A}^{2}_{\omega}(\varphi)
+b^{1}_{1}\mathcal{B}^{1}_{\omega}(\varphi)+b^{2}_{1}\mathcal{B}^{2}_{\omega}(\varphi)\nonumber\\
&+&c_{1}\mathcal{C}_{\omega}(\varphi)+d_{1}\mathcal{D}_{\omega}(\varphi)
+e_{1}\mathcal{E}_{\omega}(\varphi)+f_{1}\mathcal{F}_{\omega}(\varphi),\label{3.27}
\\
\mathcal{J}^{{\rm AY}}_{\omega}(\varphi)&:=&-\mathcal{L}^{{\rm
M}}_{\omega}(\varphi)+\frac{1}{V_{\omega}}\int_{X}\varphi\omega^{n}\nonumber\\
&+&a^{1}_{2}\mathcal{A}^{1}_{\omega}(\varphi)+a^{2}_{2}\mathcal{A}^{2}_{\omega}(\varphi)
+b^{1}_{2}\mathcal{B}^{1}_{\omega}(\varphi)+b^{2}_{2}\mathcal{B}^{2}_{\omega}(\varphi)\nonumber\\
&+&c_{2}\mathcal{C}_{\omega}(\varphi)+d_{2}\mathcal{D}_{\omega}(\varphi)
+e_{2}\mathcal{E}_{\omega}(\varphi)+f_{2}\mathcal{F}_{\omega}(\varphi),
\nonumber\\
&=&\mathcal{J}^{{\rm AY}}_{\omega|\bullet}(\varphi) \nonumber \\
&+&(a^{1}_{2}-1)\mathcal{A}^{1}_{\omega}(\varphi)+(a^{2}_{2}-1)\mathcal{A}^{2}_{\omega}(\varphi)
+(b^{1}_{2}-1)\mathcal{B}^{1}_{\omega}(\varphi)+(b^{2}_{2}-1)\mathcal{B}_{\omega}^{2}(\varphi)
\nonumber\\
&+&c_{2}\mathcal{C}_{\omega}(\varphi)+d_{2}\mathcal{D}_{\omega}(\varphi)
+e_{2}\mathcal{E}_{\omega}(\varphi)+f_{2}\mathcal{F}_{\omega}(\varphi).\label{3.28}
\end{eqnarray}

Plugging (\ref{3.27}) and (\ref{3.28}) into (\ref{3.26}) and
(\ref{3.17}), we obtain
\begin{equation}
\frac{n}{n+1}\mathcal{I}^{{\rm
AY}}_{\omega}(\varphi)-\mathcal{J}^{{\rm
AY}}_{\omega}(\varphi)=\frac{1}{V_{\omega}}\sum^{n-1}_{i=1}\frac{i}{n+1}\int_{X}\sqrt{-1}\partial\varphi
\wedge\overline{\partial}\varphi\wedge\omega^{i}_{\varphi}
\wedge\omega^{n-1-i}\geq0,\label{3.29}
\end{equation}
and
\begin{equation}
(n+1)\mathcal{J}^{{\rm AY}}_{\omega}(\varphi)-\mathcal{I}^{{\rm
AY}}_{\omega}(\varphi)=\sum^{n-1}_{i=0}\frac{n-1-i}{V_{\omega}}\int_{X}\sqrt{-1}\partial
\varphi\wedge\overline{\partial}\varphi\wedge\omega^{i}_{\varphi}
\wedge\omega^{n-1-i}\geq0,\label{3.30}
\end{equation}
where we require that constants satisfy the following linear
equations system
\begin{eqnarray}
\frac{n}{n+1}a^{1}_{1}-(a^{1}_{2}-1)&=&\frac{1}{n+1},\label{3.31} \ \ \
\frac{n}{n+1}a^{2}_{1}-(a^{2}_{2}-1) \ \ = \ \ \frac{1}{n+1},\\
\frac{n}{n+1}b^{1}_{1}-(b^{1}_{2}-1)&=&\frac{1}{n+1}, \label{3.32}
\ \ \
\frac{n}{n+1}b^{2}_{1}-(b^{2}_{2}-1) \ \ = \ \ \frac{1}{n+1},\\
\frac{n}{n+1}c_{1}-c_{2}&=&-\frac{1}{2}, \label{3.33} \ \ \
\frac{n}{n+1}d_{1}-d_{2} \ \ = \ \ -\frac{1}{2}, \\
\frac{n}{n+1}e_{1}-e_{2}&=&0,\label{3.34}\ \ \
\frac{n}{n+1}f_{1}-f_{2} \ \ = \ \ 0,\\
(n+1)(a^{1}_{2}-1)-a^{1}_{1}&=&-(n+1),\label{3.35} \ \ \
(n+1)(a^{2}_{2}-1)-a^{2}_{1} \ \ = \ \ \frac{1}{n-1}, \\
(n+1)(b^{1}_{2}-1)-b^{1}_{1}&=&-(n+1), \label{3.36} \ \ \
(n+1)(b^{2}_{2}-1)-b^{2}_{1} \ \ = \ \ \frac{1}{n-1},\\
(n+1)c_{2}-c_{1}&=&0, \label{3.37} \ \ \
(n+1)d_{2}-d_{1} \ \ = \ \ 0, \\
(n+1)e_{2}-e_{1}&=&-\frac{1}{2},\label{3.38} \ \ \
(n+1)f_{2}-f_{1} \ \ = \ \ -\frac{1}{2}.
\end{eqnarray}
The constants $a^{j}_{i},b^{j}_{i},c_{i},d_{i},e_{i}$ and $f_{i}$ ,
calculated in Appendix B, are
\begin{eqnarray}
a^{1}_{1}&=&b^{1}_{1} \ \ = \ \ -\frac{n}{n-1}, \ \ \ a^{1}_{2} \ \
= \ \
b^{1}_{2} \ \ = \ \ -\frac{n}{n^{2}-1},\label{3.39}\\
a^{2}_{1}&=&b^{2}_{1} \ \ = \ \ \frac{n}{(n-1)^{2}}, \ \ \ a^{2}_{2}
\ \ = \ \ b^{2}_{2} \ \ = \ \
\frac{n}{n+1}\left(1+\frac{n}{(n-1)^{2}}\right) \label{3.40}\\
c_{1}&=&d_{1} \ \ = \ \ -\frac{n+1}{2(n-1)}, \ \ \ e_{2} \ \ = \ \
f_{2} \ \ = \ \ -\frac{n}{2(n^{2}-1)}, \label{3.41}\\
c_{2}&=&d_{2} \ \ = \ \ e_{1} \ \ = \ \ f_{1} \ \ = \ \
-\frac{1}{2(n-1)}.\label{3.42}
\end{eqnarray}

The explicit formulas for $\mathcal{I}^{{\rm AY}}_{\omega}(\varphi)$
and $\mathcal{J}^{{\rm AY}}_{\omega}(\varphi)$ are given in
Proposition \ref{C.1} and \ref{C.2} respectively. Namely,
\begin{eqnarray}
\mathcal{I}^{{\rm AY}}_{\omega}(\varphi)&=&\frac{1}{V_{\omega}}\int_{X}\varphi(\omega^{n}-\omega^{n}_{\varphi})-\frac{n}{2V_{\omega}}\sum^{n-2}_{i=0}\int_{X}\varphi\omega^{i}_{\varphi}\wedge\omega^{n-2-i}
\wedge\sqrt{-1}\partial\omega\wedge\overline{\partial}\varphi\nonumber\\
&+&\frac{n}{2V_{\omega}}\sum^{n-2}_{i=0}\int_{X}\varphi\omega^{i}_{\varphi}\wedge\omega^{n-2-i}
\wedge\sqrt{-1}\overline{\partial}\omega\wedge\partial\varphi\nonumber,\\
&=&\frac{1}{V_{\omega}}\sum^{n-1}_{i=0}\int_{X}\sqrt{-1}\partial\varphi\wedge\overline{\partial}\varphi\wedge\omega^{i}_{\varphi}\wedge\omega^{n-1-i},\label{3.43}\\
\mathcal{J}^{{\rm AY}}_{\omega}(\varphi) &=&
-\mathcal{L}^{{\rm
M}}_{\omega}(\varphi)+\frac{1}{V_{\omega}}\int_{X}\varphi\omega^{n}
-\frac{n}{2V_{\omega}}\sum^{n-2}_{i=0}\int_{X}\varphi\omega^{i}_{\varphi}\wedge\omega^{n-2-i}
\wedge\sqrt{-1}\partial\omega\wedge\overline{\partial}\varphi\nonumber\\
&+&\frac{n}{2V_{\omega}}\sum^{n-2}_{i=0}\int_{X}\varphi\omega^{i}_{\varphi}\wedge\omega^{n-2-i}
\wedge\sqrt{-1}\overline{\partial}\omega\wedge\partial\varphi\nonumber \\
&=&\frac{1}{V_{\omega}}\sum^{n-1}_{i=0}\int_{X}\frac{n-i}{n+1}\sqrt{-1}\partial\varphi\wedge\overline{\partial}\varphi\wedge\omega^{i}_{\varphi}\wedge\omega^{n-i-1}.\label{3.44}
\end{eqnarray}
Here the formulas (\ref{3.43}) and (\ref{3.44}) come from the solution of the system of linear equations (\ref{3.29}) and (\ref{3.30}).

From (\ref{3.29}), (\ref{3.30}), (\ref{3.37}) and (\ref{3.44}), we
deduce the following

\begin{theorem} \label{thm3.1} For any $\varphi\in\mathcal{P}_{\omega}$, one has
\begin{eqnarray}
\frac{n}{n+1}\mathcal{I}^{{\rm
AY}}_{\omega}(\varphi)-\mathcal{J}^{{\rm
AY}}_{\omega}(\varphi)&\geq&0, \label{3.45}\\
(n+1)\mathcal{J}^{{\rm AY}}_{\omega}(\varphi)-\mathcal{I}^{{\rm
AY}}_{\omega}(\varphi)&\geq&0.\label{3.46}
\end{eqnarray}
In particular
\begin{eqnarray}
\frac{1}{n+1}\mathcal{I}^{{\rm
AY}}_{\omega}(\varphi)&\leq&\mathcal{J}^{{\rm AY}}_{\omega}(\varphi)
\ \ \leq \ \ \frac{n}{n+1}\mathcal{I}^{{\rm AY}}_{\omega}(\varphi),
\label{3.47}\\
\frac{n+1}{n}\mathcal{J}^{{\rm
AY}}_{\omega}(\varphi)&\leq&\mathcal{I}^{{\rm AY}}_{\omega}(\varphi)
\ \ \leq \ \ (n+1)\mathcal{J}^{{\rm AY}}_{\omega}(\varphi), \label{3.48}\\
\frac{1}{n}\mathcal{J}^{{\rm AY}}_{\omega}(\varphi) \ \ \leq \ \
\frac{1}{n+1}\mathcal{J}^{{\rm AY}}_{\omega}(\varphi)&\leq&
\mathcal{I}^{{\rm
AY}}_{\omega}(\varphi)-\mathcal{J}^{{\rm AY}}_{\omega}(\varphi)\label{3.49}\\
&\leq&\frac{n}{n+1}\mathcal{I}^{{\rm AY}}_{\omega}(\varphi) \ \ \leq
\ \ n\mathcal{J}^{{\rm AY}}_{\omega}(\varphi).
\end{eqnarray}
\end{theorem}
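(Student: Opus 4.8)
The plan is to observe that essentially all of the analytic work has already been carried out in deriving equations (\ref{3.29}) and (\ref{3.30}); what remains is a positivity check followed by elementary algebra. Indeed, identities (\ref{3.29}) and (\ref{3.30}) exhibit $\frac{n}{n+1}\mathcal{I}^{\rm AY}_\omega(\varphi)-\mathcal{J}^{\rm AY}_\omega(\varphi)$ and $(n+1)\mathcal{J}^{\rm AY}_\omega(\varphi)-\mathcal{I}^{\rm AY}_\omega(\varphi)$ as finite sums of integrals of the shape $\int_X\sqrt{-1}\partial\varphi\wedge\overline{\partial}\varphi\wedge\omega^i_\varphi\wedge\omega^{n-1-i}$ weighted by the nonnegative coefficients $\frac{i}{n+1}$ (for $1\le i\le n-1$) and $\frac{n-1-i}{V_\omega}$ (for $0\le i\le n-1$). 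So the first step is simply to verify that each of these integrals is nonnegative, which immediately delivers (\ref{3.45}) and (\ref{3.46}).

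For the positivity step I would argue pointwise. The form $\sqrt{-1}\partial\varphi\wedge\overline{\partial}\varphi$ is the real $(1,1)$-form attached to the rank-one positive semidefinite Hermitian matrix $(\varphi_j\overline{\varphi_k})$, hence is a nonnegative $(1,1)$-form. By hypothesis $\varphi\in\mathcal{P}_\omega$ forces $\omega_\varphi>0$, while $\omega>0$ because it is the $(1,1)$-form associated to the Hermitian metric $g$. The standard fact that a wedge product of nonnegative real $(1,1)$-forms is a nonnegative $(n,n)$-form (a nonnegative multiple of the orientation volume form) then shows each integrand $\sqrt{-1}\partial\varphi\wedge\overline{\partial}\varphi\wedge\omega^i_\varphi\wedge\omega^{n-1-i}$ is a nonnegative top-degree form. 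Since $V_\omega>0$ and the coefficients are nonnegative, the sums in (\ref{3.29}) and (\ref{3.30}) are $\ge0$, proving (\ref{3.45}) and (\ref{3.46}). Applying the same argument to the closed formulas (\ref{3.43}) and (\ref{3.44}), whose coefficients $1$ and $\frac{n-i}{n+1}$ are likewise nonnegative, shows in addition that $\mathcal{I}^{\rm AY}_\omega(\varphi)\ge0$ and $\mathcal{J}^{\rm AY}_\omega(\varphi)\ge0$.

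Write $I:=\mathcal{I}^{\rm AY}_\omega(\varphi)$ and $J:=\mathcal{J}^{\rm AY}_\omega(\varphi)$. The chain (\ref{3.47})--(\ref{3.49}) is then a purely algebraic consequence of $I\ge0$, $J\ge0$, and the two basic inequalities $J\le\frac{n}{n+1}I$ and $I\le(n+1)J$. Rearranging the latter pair gives at once $\frac{1}{n+1}I\le J\le\frac{n}{n+1}I$ and $\frac{n+1}{n}J\le I\le(n+1)J$, which are (\ref{3.47}) and (\ref{3.48}). For the estimates on $I-J$ I would note that $I-J\le\frac{n}{n+1}I$ is equivalent to $\frac{1}{n+1}I\le J$, which holds; that $\frac{1}{n+1}J\le I-J$ follows from $I\ge\frac{n+1}{n}J$ together with the elementary bound $\frac{n+1}{n}\ge\frac{n+2}{n+1}$; and that dividing the already-established $\frac{1}{n+1}I\le J$ by $n$ yields $\frac{n}{n+1}I\le nJ$. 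Together these pin down $I-J$ between the stated multiples of $J$ and of $I$.

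The point to stress is that there is no genuine analytic obstacle in this theorem: all of the real difficulty lay in producing the identities (\ref{3.29})--(\ref{3.30}) and the explicit formulas (\ref{3.43})--(\ref{3.44}), which I take as given. The only place demanding care is the pointwise positivity of the mixed wedge products $\sqrt{-1}\partial\varphi\wedge\overline{\partial}\varphi\wedge\omega^i_\varphi\wedge\omega^{n-1-i}$; here one must keep in mind that, although $\omega$ is not assumed closed, its associated Hermitian form is still a positive $(1,1)$-form, so the positivity argument is entirely insensitive to the failure of $d\omega=0$. I expect the verification of this positivity to be the sole substantive point, with the remainder of the theorem reducing to bookkeeping.
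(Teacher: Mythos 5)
Your proposal is correct and is essentially the paper's own proof: the paper deduces (3.45)--(3.46) directly from the identities (3.29)--(3.30), whose right-hand sides are sums of nonnegative integrals for exactly the pointwise positivity reason you spell out (namely $\sqrt{-1}\partial\varphi\wedge\overline{\partial}\varphi\geq0$ wedged against powers of the positive forms $\omega$ and $\omega_{\varphi}$), and it then obtains (3.47)--(3.49) by the same elementary algebra together with $\mathcal{J}^{{\rm AY}}_{\omega}(\varphi)\geq0$ from (3.44). One remark: the first inequality of (3.49) as printed, $\frac{1}{n}\mathcal{J}^{{\rm AY}}_{\omega}(\varphi)\leq\frac{1}{n+1}\mathcal{J}^{{\rm AY}}_{\omega}(\varphi)$, is a typo (the two coefficients should be interchanged), and your chain establishes the intended statement; in fact the stronger bound $\frac{1}{n}\mathcal{J}^{{\rm AY}}_{\omega}(\varphi)\leq\mathcal{I}^{{\rm AY}}_{\omega}(\varphi)-\mathcal{J}^{{\rm AY}}_{\omega}(\varphi)$ follows at once from $\mathcal{I}^{{\rm AY}}_{\omega}(\varphi)\geq\frac{n+1}{n}\mathcal{J}^{{\rm AY}}_{\omega}(\varphi)$ and $\mathcal{J}^{{\rm AY}}_{\omega}(\varphi)\geq0$, which is simpler than your detour through $\frac{n+2}{n+1}$.
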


\appendix
\section{Proof the identities (\ref{3.14}), (\ref{3.21}) and (\ref{3.25})}
In Appendix A we verify the identities (\ref{3.14}), (\ref{3.21}) and
(\ref{3.25}).

\begin{eqnarray}
& & \frac{n}{n+1}\mathcal{I}^{{\rm
AY}}_{\omega|\bullet}(\varphi)-\mathcal{J}^{{\rm
AY}}_{\omega|\bullet}(\varphi) \label{A.1}\\
&=&\frac{1}{V_{\omega}}\int_{X}\left(\varphi\cdot\sum^{n-1}_{j=1}\frac{j}{n+1}\omega^{n-1-j}
\wedge\omega^{j}_{\varphi}\right)\wedge\sqrt{-1}\overline{\partial}\partial\varphi
\nonumber\\
&=&\frac{1}{V_{\omega}}\int_{X}-\sqrt{-1}\overline{\partial}\left(\varphi\sum^{n-1}_{j=1}
\frac{j}{n+1}\omega^{n-1-j}\wedge\omega^{j}_{\varphi}\right)\wedge\partial\varphi
\nonumber\\
&=&\frac{1}{V_{\omega}}\int_{X}-\sqrt{-1}\left(\overline{\partial}\varphi\wedge
\sum^{n-1}_{j=1}\frac{j}{n+1}\omega^{n-1-j}\wedge\omega^{j}_{\varphi}\right)\wedge\partial\varphi
\nonumber\\
&+&\frac{1}{V_{\omega}}\int_{X}-\sqrt{-1}\varphi\sum^{n-1}_{j=1}\frac{j}{n+1}\left[(n-1-j)\omega^{n-2-j}
\wedge\overline{\partial}\omega\wedge\omega^{j}_{\varphi}\right.\nonumber\\
&+&\left.\omega^{n-1-j}\wedge
j\omega^{j-1}_{\varphi}\wedge\overline{\partial}\omega\right]\wedge\partial\varphi
\nonumber\\
&=&\frac{1}{V_{\omega}}\sum^{n-1}_{i=1}\frac{i}{n+1}\int_{X}\sqrt{-1}\partial\varphi\wedge\overline{\partial}
\varphi\wedge\omega^{n-1-i}\wedge\omega^{i}_{\varphi} \nonumber\\
&+&\frac{1}{V_{\omega}}\sum^{n-2}_{i=1}\frac{i(n-1-i)}{n+1}\int_{X}-\sqrt{-1}\varphi
\omega^{n-2-i}\wedge\omega^{i}_{\varphi}\wedge\overline{\partial}\omega\wedge\partial\varphi
\nonumber\\
&+&\frac{1}{V_{\omega}}\sum^{n-2}_{i=0}\frac{(i+1)^{2}}{n+1}\int_{X}-\sqrt{-1}\varphi
\omega^{n-2-i}\wedge\omega^{i}_{\varphi}\wedge\overline{\partial}\omega\wedge\partial\varphi
\nonumber\\
&=&\frac{1}{V_{\omega}}\sum^{n-1}_{i=1}\frac{i}{n+1}\int_{X}\sqrt{-1}\partial\varphi
\wedge\overline{\partial}\varphi\wedge\omega^{n-1-i}\wedge\omega^{i}_{\varphi}
-\frac{2}{n+1}\mathcal{B}_{\omega}(\varphi)+\mathcal{D}_{\omega}(\varphi)\nonumber
\end{eqnarray}
which gives (\ref{3.15}). Calculate
\begin{eqnarray*}
& & (n+1)\mathcal{J}^{{\rm AY}}_{\omega|\bullet}-\mathcal{I}^{{\rm
AY}}_{\omega|\bullet}(\varphi) \label{A.2}\\
&=&\frac{1}{V_{\omega}}\int_{X}\left(\varphi\sum^{n-1}_{j=0}(n-1-j)\omega^{n-1-j}\wedge\omega^{j}_{\varphi}\right)\wedge(-\sqrt{-1}\partial\overline{\partial}\varphi)
\\
&=&\frac{1}{V_{\omega}}\int_{X}\sqrt{-1}\partial\left(\varphi\sum^{n-1}_{j=0}(n-1-j)\omega^{n-1-j}\wedge\omega^{j}_{\varphi}\right)
\wedge\overline{\partial}\varphi \\
&=&\frac{1}{V_{\omega}}\int_{X}\sqrt{-1}\partial\varphi\wedge\sum^{n-1}_{j=0}
(n-1-j)\omega^{n-1-j}\wedge\omega^{j}_{\varphi}\wedge\overline{\partial}\varphi
\\
&+&\frac{1}{V_{\omega}}\int_{X}\sqrt{-1}\varphi\sum^{n-1}_{j=0}\left[(n-1-j)^{2}\omega^{n-2-j}
\wedge\partial\omega\wedge\omega^{j}_{\varphi}\right.\\
&+&\left.(n-1-j)j\omega^{n-1-j}\wedge\omega^{j-1}_{\varphi}\wedge\partial\omega\right]\wedge\overline{\partial}\varphi
\\
&=&\frac{1}{V_{\omega}}\sum^{n-1}_{i=0}(n-1-j)\int_{X}\sqrt{-1}\partial\varphi
\wedge\overline{\partial}\varphi\wedge\omega^{i}_{\varphi}\wedge\omega^{n-1-i}
\\
&+&\frac{1}{V_{\omega}}\sum^{n-1}_{j=0}(n-1-j)^{2}\int_{X}\varphi\omega^{n-2-j}\wedge
\omega^{j}_{\varphi}\wedge\sqrt{-1}\partial\omega\wedge\overline{\partial}\varphi
\\
&+&\frac{1}{V_{\omega}}\sum^{n-1}_{j=0}(n-1-j)j\int_{X}\varphi\omega^{n-1-j}\wedge
\omega^{j-1}_{\varphi}\wedge\sqrt{-1}\partial\omega\wedge\overline{\partial}\varphi
\\
&=&\frac{1}{V_{\omega}}\sum^{n-1}_{i=0}(n-1-j)\int_{X}\sqrt{-1}\partial\varphi
\wedge\overline{\partial}\varphi\wedge\omega^{i}_{\varphi}\wedge\omega^{n-1-i}
\\
&+&\frac{1}{V_{\omega}}\sum^{n-2}_{j=0}(n-1-j)^{2}\int_{X}\varphi\omega^{n-2-j}\wedge\omega^{j}_{\varphi}\wedge\sqrt{-1}\partial\omega
\wedge\overline{\partial}\varphi \\
&+&\frac{1}{V_{\omega}}\sum^{n-3}_{i=0}(i+1)(n-i-2)\int_{X}\varphi\omega^{n-2-i}\wedge\omega^{i}_{\varphi}\wedge\sqrt{-1}
\partial\omega\wedge\overline{\partial}\varphi \\
&=&\frac{1}{V_{\omega}}\sum^{n-1}_{i=0}(n-1-i)\int_{X}\sqrt{-1}\partial\varphi
\wedge\overline{\partial}\varphi\wedge\omega^{i}_{\varphi}\wedge\omega^{n-1-i}
\\
&+&\frac{1}{V_{\omega}}\sum^{n-3}_{i=0}[n^{2}-(n+1)(i+1)]\int_{X}\varphi\omega^{n-2-i}\wedge\omega^{i}_{\varphi}
\wedge(\sqrt{-1}\partial\omega\wedge\overline{\partial}\varphi) \\
&+&\frac{1}{V_{\omega}}\int_{X}\varphi\omega^{n-2}_{\varphi}\wedge\sqrt{-1}\partial\omega
\wedge\overline{\partial}\varphi
\end{eqnarray*}
where we use the elementary identity
\begin{eqnarray*}
& & (n-1-i)^{2}+(i+1)(n-i-2)\\
&=&(n-1)^{2}+i^{2}-2(n-1)i+(n-2)(i+1)-i(i+1) \\
&=&n^{2}-2n+1+i^{2}-2ni+2i+ni+n-2i-2-i^{2}-i \\
&=&n^{2}-n-1-ni-i \ \ = \ \ -(n+1)(i+1)+n^{2}.
\end{eqnarray*}
Using the definitions of $\mathcal{E}_{\omega}(\varphi),
\mathcal{A}^{1}_{\omega}(\varphi)$,
$\mathcal{A}^{2}_{\omega}(\varphi)$, we have
$\mathcal{A}^{1}_{\omega}(\varphi)+\mathcal{A}^{2}_{\omega}(\varphi)=
\mathcal{A}_{\omega}(\varphi)$ and hence (\ref{3.21}) holds.
Similarly, we have
$\mathcal{B}^{1}_{\omega}(\varphi)+\mathcal{B}^{2}_{\omega}(\varphi)=\mathcal{B}_{\omega}(\varphi)$
and
\begin{eqnarray*}
& & (n+1)\mathcal{J}^{{\rm
AY}}_{\omega|\bullet}(\varphi)-\mathcal{I}^{{\rm
AY}}_{\omega|\bullet}(\varphi) \\
&=&\frac{1}{V_{\omega}}\int_{X}-\sqrt{-1}\overline{\partial}\left(\varphi
\sum^{n-1}_{j=0}(n-1-j)\omega^{n-1-j}\wedge\omega^{j}_{\varphi}\right)\wedge\partial\varphi
\\
&=&\frac{1}{V_{\omega}}\int_{X}-\sqrt{-1}\overline{\partial}\varphi\wedge
\sum^{n-1}_{j=0}(n-1-j)\omega^{n-1-j}\wedge\omega^{j}_{\varphi}\wedge\partial\varphi
\\
&+&\frac{1}{V_{\omega}}\int_{X}-\sqrt{-1}\varphi\sum^{n-1}_{j=0}(n-1-j)
((n-1-j)\omega^{n-2-j}\wedge\overline{\partial}\omega\wedge\omega^{j}_{\varphi}\\
&+&j\omega^{n-1-j}\wedge\omega^{j-1}_{\varphi}\wedge\overline{\partial}\omega)
\wedge\partial\varphi \\
&=&\frac{1}{V_{\omega}}\sum^{n-1}_{i=0}(n-1-i)\int_{X}\sqrt{-1}\partial\varphi
\wedge\overline{\partial}\varphi\wedge\omega^{n-1-i}\wedge\omega^{i}_{\varphi}
\\
&+&\frac{1}{V_{\omega}}\sum^{n-1}_{j=0}(n-1-j)^{2}\int_{X}\varphi\omega^{n-2-j}\wedge\omega^{j}_{\varphi}
\wedge(-\sqrt{-1}\overline{\partial}\omega\wedge\partial\varphi) \\
&+&\frac{1}{V_{\omega}}\sum^{n-1}_{j=0}(n-1-j)j\int_{X}\varphi\omega^{n-1-j}\wedge
\omega^{j-1}_{\varphi}\wedge(-\sqrt{-1}\overline{\partial}\omega\wedge\partial\varphi)
\\
&=&\frac{1}{V_{\omega}}\sum^{n-1}_{i=0}(n-1-i)\int_{X}\sqrt{-1}
\partial\varphi\wedge\overline{\partial}\varphi\wedge\omega^{n-1-i}\wedge
\omega^{i}_{\varphi} \\
&+&\frac{1}{V_{\omega}}\sum^{n-2}_{i=0}(n-1-i)^{2}\int_{X}\varphi\omega^{n-2-i}
\wedge\omega^{i}_{\varphi}\wedge(-\sqrt{-1}\overline{\partial}\omega\wedge\partial\varphi)
\\
&+&\frac{1}{V_{\omega}}\sum^{n-3}_{i=0}(n-i-2)(i+1)\int_{X}\varphi\omega^{n-2-i}\wedge
\omega^{i}_{\varphi}\wedge(-\sqrt{-1}\overline{\partial}\omega\wedge\partial\varphi)
\\
&=&\frac{1}{V_{\omega}}\sum^{n-1}_{i=0}(n-1-i)\int_{X}\sqrt{-1}\partial\varphi\wedge\overline{\partial}\varphi
\wedge\omega^{n-1-i}\wedge\omega^{i}_{\varphi} \\
&+&\frac{1}{V_{\omega}}\sum^{n-3}_{i=0}[n^{2}-(n+1)(i+1)]\int_{X}\varphi
\omega^{n-2-i}\wedge\omega^{i}_{\varphi}\wedge(-\sqrt{-1}\overline{\partial}\omega\wedge\partial\varphi)
\\
&+&\frac{1}{V_{\omega}}\int_{X}\varphi\omega^{n-2}_{\varphi}\wedge(-\sqrt{-1}
\overline{\partial}\omega\wedge\partial\varphi).
\end{eqnarray*}
and hence
\begin{eqnarray}
(n+1)\mathcal{J}^{{\rm
AY}}_{\omega|\bullet}(\varphi)-\mathcal{I}^{{\rm
AY}}_{\omega|\bullet}(\varphi)&=&\frac{1}{V_{\omega}}\sum^{n-1}_{i=0}(n-1-i)
\int_{X}\sqrt{-1}\partial\varphi\wedge\overline{\partial}\varphi\wedge\omega^{i}_{\varphi}
\wedge\omega^{n-1-i} \nonumber \\
&+&\mathcal{F}_{\omega}(\varphi)+2(n+1)\mathcal{B}^{1}_{\omega}(\varphi)-\frac{2}{n-1}\mathcal{B}^{2}_{\omega}(\varphi).\label{A.3}
\end{eqnarray}
Therefore (\ref{3.21}) and (\ref{3.25}) together gives
\begin{eqnarray}
(n+1)\mathcal{J}^{{\rm
AY}}_{\omega|\bullet}(\varphi)-\mathcal{I}^{{\rm
AY}}_{\omega|\bullet}(\varphi)&=&\frac{1}{V_{\omega}}\sum^{n-1}_{i=0}(n-1-i)
\int_{X}\sqrt{-1}\partial\varphi\wedge\overline{\partial}\varphi
\wedge\omega^{i}_{\varphi}\wedge\omega^{n-1-i}\nonumber \\
&+&\frac{\mathcal{E}_{\omega}(\varphi)+\mathcal{F}_{\omega}(\varphi)}{2}
+(n+1)(\mathcal{A}^{1}_{\omega}(\varphi)+\mathcal{B}^{1}_{\omega}(\varphi)\label{A.4}\\
&-&\frac{\mathcal{A}^{2}_{\omega}(\varphi)
+\mathcal{B}^{2}_{\omega}(\varphi)}{n-1}.\nonumber
\end{eqnarray}

\section{Solve the system of the linear equations} \label{oldnew}
In this section we try to solve the system of the linear equations
(\ref{3.31})-(\ref{3.38}). Firstly we solve (\ref{3.31}) and
(\ref{3.35}) as follows: (\ref{3.31}) and (\ref{3.35}) gives us the
following equations
\begin{eqnarray}
\frac{n}{n+1}a^{1}_{1}-\frac{1}{n+1}&=&a^{1}_{2}-1, \label{B.1}\\
(n+1)(a^{1}_{2}-1)+(n+1) &=&a^{1}_{1}, \nonumber\\
\frac{n}{n+1}a^{2}_{1}-\frac{1}{n+1}&=&a^{2}_{2}-1,\label{B.2}\\
(n+1)(a^{2}_{2}-1)-\frac{1}{n-1}&=& a^{2}_{1}.\nonumber
\end{eqnarray}
Plugging the first equation into second equation in (\ref{B.1}), we
have
\begin{equation*}
(n+1)\left(\frac{n}{n+1}a^{1}_{1}-\frac{1}{n+1}\right)+(n+1)=a^{1}_{1}
\end{equation*}
which implies
\begin{equation}
a^{1}_{1}=-\frac{n}{n-1}, \ \ \
a^{1}_{2}=-\frac{n}{n^{2}-1}.\label{B.3}
\end{equation}
Similarly,
\begin{equation*}
(n+1)\left(\frac{n}{n+1}a^{2}_{1}-\frac{1}{n+1}\right)-\frac{1}{n+1}=a^{2}_{1},
\end{equation*}
therefore
\begin{equation}
a^{2}_{1}=\frac{n}{(n-1)^{2}}, \ \ \
a^{2}_{2}=\frac{n}{n+1}\left(1+\frac{n}{(n-1)^{2}}\right)=\frac{n^{3}-n^{2}+n}{n^{3}-n^{2}-n+1}.\label{B.4}
\end{equation}
Secondly, (\ref{3.32}) and (\ref{3.36}) implies
\begin{eqnarray}
\frac{n}{n+1}b^{1}_{1}-\frac{1}{n+1}&=&b^{1}_{2}-1,\label{B.5}\\
(n+1)(b^{1}_{2}-1)&=&^{1}_{1}-(n+1), \nonumber\\
\frac{n}{n+1}b^{2}_{1}-\frac{1}{n+1}&=&b^{2}_{2}-1, \label{B.6}\\
(n+1)(b^{2}_{2}-1) &=&b^{2}_{1}+\frac{1}{n-1}.\nonumber
\end{eqnarray}
The above linear equations system gives
\begin{equation*}
(n+1)\left(\frac{n}{n+1}b^{1}_{1}-\frac{1}{n+1}\right)=b^{1}_{1}-(n+1)
\end{equation*}
and
\begin{equation*}
(n+1)\left(\frac{n}{n+1}b^{2}_{1}-\frac{1}{n+1}\right)=b^{2}_{1}+\frac{1}{n-1},
\end{equation*}
respectively. Hence
\begin{eqnarray}
b^{1}_{1}&=&-\frac{n}{n-1}, \label{B.7}\\
b^{1}_{2}&=&
-\frac{n}{n^{2}-1}, \nonumber\\
b^{2}_{1}&=&\frac{n}{(n-1)^{2}}, \label{B.8}\\
 b^{2}_{2}&=&
\frac{n}{n+1}\left(1+\frac{n}{(n-1)^{2}}\right)\nonumber.
\end{eqnarray}
Continuously, equations (\ref{3.33}) and (\ref{3.37}) shows that
\begin{eqnarray*}
\frac{n}{n+1}c_{1}-c_{2}&=&-\frac{1}{2}, \ \ \ (n+1)c_{2}-c_{1} \ \
= \ \ 0, \\
\frac{n}{n+1}d_{1}-d_{2}&=&-\frac{1}{2}, \ \ \ (n+1)d_{2}-d_{1} \ \
= \ \ 0.
\end{eqnarray*}
Eliminating $c_{2}$ and $d_{2}$ respectively, we have
\begin{eqnarray*}
(n+1)\left(\frac{n}{n+1}c_{1}+\frac{1}{2}\right)-c_{1}&=&0, \\
(n+1)\left(\frac{n}{n+1}d_{1}+\frac{1}{2}\right)-d_{1}&=&0.
\end{eqnarray*}
Thus
\begin{eqnarray}
c_{1}&=&-\frac{n+1}{2(n-1)}, \label{B.9}\\
c_{2}&=&
-\frac{1}{2(n-1)}, \nonumber\\
d_{1}&=&-\frac{n+1}{2(n-1)}, \label{B.10}\\
d_{2}&=& -\frac{1}{2(n-1)}.\nonumber
\end{eqnarray}
Similarly, from (\ref{3.34}) and (\ref{3.38}) we obtain
\begin{eqnarray*}
\frac{n}{n+1}e_{1}-e_{2}=0, \ \ \ (n+1)e_{2}-e_{1} \ \ = \ \
-\frac{1}{2}, \\
\frac{n}{n+1}f_{1}-f_{2}=0, \ \ \ (n+1)f_{2}-f_{1} \ \ = \ \
-\frac{1}{2},
\end{eqnarray*}
and hence
\begin{eqnarray}
e_{1}&=&f_{1} \ \ = \ \ -\frac{1}{2(n-1)},\label{B.11}\\
e_{2}&=&f_{2} \ \ = \ \ -\frac{n}{2(n^{2}-1)}.\label{B.12}
\end{eqnarray}
\section{Explicit formulas of $\mathcal{I}^{{\rm
AY}}_{\omega}(\varphi)$ and $\mathcal{J}^{{\rm
AY}}_{\omega}(\varphi)$} \label{oldnew}
In this section we give the explicit formulas of $\mathcal{I}^{{\rm
AY}}_{\omega}(\varphi)$ and $\mathcal{J}^{{\rm
AY}}_{\omega}(\varphi)$. In what follows, we assume that $n\geq3$.
Using the constants determined in Appendix B, we have
\begin{eqnarray*}
\mathcal{I}^{{\rm AY}}_{\omega}(\varphi)&=&
\frac{1}{V_{\omega}}\int_{X}\varphi(
\omega^{n}-\omega^{n}_{\varphi})\\
&=&\frac{n}{n-1}\sum^{n-3}_{i=0}\frac{i+1}{2V_{\omega}}\int_{X}\varphi\omega^{i}_{\varphi}\wedge\omega^{n-2-i}\wedge\sqrt{-1}\partial\omega\wedge\overline{\partial}\varphi
\\
&-&\frac{n}{n-1}\frac{1}{2V_{\omega}}\int_{X}\varphi\omega^{n-2}_{\varphi}\wedge\sqrt{-1}\partial
\omega\wedge\overline{\partial}\varphi\\
&-&\frac{n}{n-1}\sum^{n-3}_{i=0}\frac{i+1}{2V_{\omega}}\int_{X}\varphi\omega^{i}_{\varphi}\wedge\omega^{n-2-i}\wedge\sqrt{-1}\overline{\partial}
\omega\wedge\partial\varphi \\
&+&\frac{n}{n-1}\frac{1}{2V_{\omega}}\int_{X}\varphi\omega^{n-2}_{\varphi}\wedge\sqrt{-1}
\overline{\partial}\omega\wedge\partial\varphi \\
&-&\frac{n}{n-1}\sum^{n-2}_{i=1}\frac{i}{2V_{\omega}}\int_{X}\varphi\omega^{i}_{\varphi}
\wedge\omega^{n-3-i}\wedge\sqrt{-1}\partial\omega\wedge\overline{\partial}\varphi
\\
&+&\frac{n}{n-1}\sum^{n-2}_{i=1}\frac{i}{2V_{\omega}}\int_{X}\varphi\omega^{i}_{\varphi}\wedge
\omega^{n-2-i}\wedge\sqrt{-1}\overline{\partial}\omega\wedge\partial\varphi
\\
&-&\frac{n^{2}}{n-1}\sum^{n-3}_{i=0}\frac{1}{2V_{\omega}}\int_{X}\varphi\omega^{i}_{\varphi}\wedge
\omega^{n-2-i}\wedge\sqrt{-1}\partial\omega\wedge\overline{\partial}\varphi
\\
&+&\frac{n^{2}}{n-1}\sum^{n-3}_{i=0}\frac{1}{2V_{\omega}}\int_{X}\varphi\omega^{i}_{\varphi}
\wedge\omega^{n-2-i}\wedge\sqrt{-1}\overline{\partial}\omega\wedge\partial\varphi.
\end{eqnarray*}
When $n=3$, it is easy to see that
\begin{eqnarray*}
\mathcal{I}^{{\rm
AY}}_{\omega}(\varphi)&=&\frac{1}{V_{\omega}}\int_{X}\varphi(\omega^{3}-\omega^{3}_{\varphi})
\\
&+&\frac{3}{4V_{\omega}}\int_{X}\varphi\omega\wedge\sqrt{-1}\partial\omega\wedge\overline{\partial}\varphi
-\frac{3}{4V_{\omega}}\int_{X}\varphi\omega_{\varphi}\wedge\sqrt{-1}\partial\omega\wedge\overline{\partial}\varphi
\\
&-&\frac{3}{4V_{\omega}}\int_{X}\varphi\omega\wedge\sqrt{-1}\overline{\partial}\omega\wedge\partial\varphi
+\frac{3}{4V_{\omega}}\int_{X}\varphi\omega_{\varphi}\wedge\sqrt{-1}\overline{\partial}\omega\wedge\partial\varphi
\\
&-&\frac{3}{4V_{\omega}}\int_{X}\varphi\omega_{\varphi}\wedge\sqrt{-1}\partial\omega
\wedge\overline{\partial}\varphi+\frac{3}{4V_{\omega}}\int_{X}\varphi\omega_{\varphi}\wedge\sqrt{-1}\overline{\partial}\omega\wedge\partial\varphi
\\
&-&\frac{9}{4V_{\omega}}\int_{X}\varphi\omega\wedge\sqrt{-1}\partial\omega\wedge
\overline{\partial}\varphi+
\frac{9}{4V_{\omega}}\int_{X}\varphi\omega\wedge\sqrt{-1}\overline{\partial}\omega\wedge\partial\varphi
\\
&=&\frac{1}{V_{\omega}}\int_{X}\varphi(\omega^{3}-\omega^{3}_{\varphi})
\\
&-&\frac{3}{2V_{\omega}}\int_{X}\varphi\omega\wedge\sqrt{-1}\partial\omega\wedge\overline{\partial}\varphi
+\frac{3}{2V_{\omega}}\int_{X}\varphi\omega\wedge\sqrt{-1}\overline{\partial}\omega\wedge\partial\varphi
\\
&-&\frac{3}{2V_{\omega}}\int_{X}\varphi\omega_{\varphi}\wedge\sqrt{-1}\partial\omega\wedge\overline{\partial}\varphi
+\frac{3}{2V_{\omega}}\int_{X}\varphi\omega_{\varphi}\wedge\sqrt{-1}\overline{\partial}\omega\wedge\partial\varphi.
\end{eqnarray*}
For general $n\geq4$, a simple computation shows
\begin{eqnarray*}
& & \mathcal{I}^{{\rm AY}}_{\omega}(\varphi) \ \ = \ \
\frac{1}{V_{\omega}}\int_{X}\varphi(\omega^{n}-\omega^{n}_{\varphi})
\\
&+&\sum^{n-3}_{i=1}\frac{1}{2V_{\omega}}\left[\frac{n(i+1)}{n-1}-\frac{in}{n-1}-\frac{n^{2}}{n-1}\right]
\int_{X}\varphi\omega^{i}_{\varphi}\wedge\omega^{n-2-i}\wedge\sqrt{-1}\partial\omega
\wedge\overline{\partial}\varphi \\
&+&\frac{n}{n-1}\frac{1}{2V_{\omega}}\int_{X}\varphi\omega^{n-2}\wedge\sqrt{-1}\partial
\omega\wedge\overline{\partial}\varphi \\
&-&\frac{n(n-2)}{n-1}\frac{1}{2V_{\omega}}\int_{X}\varphi\omega^{n-2}_{\varphi}\wedge\sqrt{-1}\partial\omega
\wedge\overline{\partial}\varphi \\
&-&\frac{n}{n-1}\frac{1}{2V_{\omega}}\int_{X}\varphi\omega^{n-2}_{\varphi}\wedge\sqrt{-1}\partial\omega
\wedge\overline{\partial}\varphi \\
&+&\sum^{n-3}_{i=1}\frac{1}{2V{_\omega}}\left[-\frac{n(i+1)}{n-1}+\frac{in}{n-1}+\frac{n^{2}}{n-1}\right]
\int_{X}\varphi\omega^{i}_{\varphi}\wedge\omega^{n-2-i}\wedge\sqrt{-1}\overline{\partial}\omega\wedge\partial\varphi
\\
&-&\frac{n}{n-1}\frac{1}{2V_{\omega}}\int_{X}\varphi\omega^{n-2}\wedge\sqrt{-1}\overline{\partial}\omega\wedge\partial\varphi
\\
&+&\frac{n(n-2)}{n-1}\frac{2}{2V_{\omega}}\int_{X}\varphi\omega^{n-2}_{\varphi}\wedge\sqrt{-1}
\overline{\partial}\omega\wedge\partial\varphi \\
&+&\frac{n}{n-1}\frac{1}{2V_{\omega}}\int_{X}\varphi\omega^{n-2}_{\varphi}\wedge\sqrt{-1}\overline{\partial}\omega
\wedge\partial\varphi \\
&-&\frac{n^{2}}{n-1}\frac{1}{2V_{\omega}}\int_{X}\varphi\omega^{n-2}\wedge\sqrt{-1}\partial\omega\wedge
\overline{\partial}\varphi+\frac{n^{2}}{n-1}\frac{1}{2V_{\omega}}\int_{X}\varphi\omega^{n-2}\wedge\sqrt{-1}
\overline{\partial}\omega\wedge\partial\varphi \\
&=&\frac{1}{V_{\omega}}\int_{X}\varphi(\omega^{n}-\omega^{n}_{\varphi})
\\
&-&\sum^{n-3}_{i=1}\frac{n}{2V_{\omega}}\int_{X}\varphi\omega^{i}_{\varphi}\wedge\omega^{n-2-i}
\wedge\sqrt{-1}\partial\omega\wedge\overline{\partial}\varphi \\
&+&\sum^{n-3}_{i=1}\frac{n}{2V_{\omega}}\int_{X}\varphi\omega^{i}_{\varphi}\wedge\omega^{n-2-i}
\wedge\sqrt{-1}\overline{\partial}\omega\wedge\partial\varphi \\
&-&\frac{n}{2V_{\omega}}\int_{X}\varphi\omega^{n-2}\wedge\sqrt{-1}\partial\omega\wedge
\overline{\partial}\varphi+\frac{n}{2V_{\omega}}\int_{X}\varphi\omega^{n-2}\wedge\sqrt{-1}\overline{\partial}\omega\wedge\partial\varphi
\\
&-&\frac{n}{2V_{\omega}}\int_{X}\varphi\omega^{n-2}_{\varphi}\wedge\sqrt{-1}\partial\omega\wedge\overline{\partial}\varphi
+\frac{n}{2V_{\omega}}\int_{X}\varphi\omega^{n-2}_{\varphi}\wedge\sqrt{-1}\overline{\partial}\omega\wedge\partial\varphi.
\end{eqnarray*}
Thus

\begin{proposition} \label{C.1} If $n\geq3$, one has
\begin{eqnarray*}
& & \mathcal{I}^{{\rm AY}}_{\omega}(\varphi) \ \ = \ \
\frac{1}{V_{\omega}}\int_{X}\varphi(\omega^{n}-\omega^{n}_{\varphi})\\
&-&\frac{n}{2V_{\omega}}\sum^{n-2}_{i=1}\int_{X}\varphi\omega^{i}_{\varphi}\wedge\omega^{n-2-i}
\wedge\sqrt{-1}\partial\omega\wedge\overline{\partial}\varphi-
\frac{n}{2V_{\omega}}\int_{X}\varphi\omega^{n-2}\wedge\sqrt{-1}\partial\omega\wedge
\overline{\partial}\varphi \nonumber\\
&+&\frac{n}{2V_{\omega}}\sum^{n-2}_{i=1}\int_{X}\varphi\omega^{i}_{\varphi}\wedge\omega^{n-2-i}
\wedge\sqrt{-1}\overline{\partial}\omega\wedge\partial\varphi
+\frac{n}{2V_{\omega}}\int_{X}\varphi\omega^{n-2}\wedge\sqrt{-1}\overline{\partial}\omega\wedge\partial\varphi
\nonumber.
\end{eqnarray*}
\end{proposition}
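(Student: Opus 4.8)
The plan is to start from the definition (\ref{3.27}) of $\mathcal{I}^{{\rm AY}}_{\omega}(\varphi)$, substitute the eight constants $a^{1}_{1},a^{2}_{1},b^{1}_{1},b^{2}_{1},c_{1},d_{1},e_{1},f_{1}$ determined in Appendix B (equations (\ref{3.39})--(\ref{3.42})), and then expand each auxiliary functional using its explicit defining integral (\ref{3.7}), (\ref{3.11})--(\ref{3.24}). Because the constants obey $a^{j}_{1}=b^{j}_{1}$, $c_{1}=d_{1}$, $e_{1}=f_{1}$, and because $\mathcal{B}^{1}_{\omega},\mathcal{B}^{2}_{\omega},\mathcal{D}_{\omega},\mathcal{F}_{\omega}$ are the complex conjugates of $\mathcal{A}^{1}_{\omega},\mathcal{A}^{2}_{\omega},\mathcal{C}_{\omega},\mathcal{E}_{\omega}$ respectively, the whole sum splits cleanly into a ``holomorphic'' part built from integrals of the shape $\varphi\,\omega^{i}_{\varphi}\wedge\omega^{n-2-i}\wedge\sqrt{-1}\partial\omega\wedge\overline{\partial}\varphi$ together with its complex conjugate. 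I would therefore carry out the computation only for the holomorphic part and obtain the conjugate half for free.

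For the holomorphic part the strategy is to collect the coefficient of each basic integral $\frac{1}{V_{\omega}}\int_{X}\varphi\,\omega^{i}_{\varphi}\wedge\omega^{n-2-i}\wedge\sqrt{-1}\partial\omega\wedge\overline{\partial}\varphi$, indexed by $0\le i\le n-2$. The contributions come from $a^{1}_{1}\mathcal{A}^{1}_{\omega}$ (range $0\le i\le n-3$), $a^{2}_{1}\mathcal{A}^{2}_{\omega}$ (only the $\omega^{n-2}_{\varphi}$ term at $i=n-2$), $c_{1}\mathcal{C}_{\omega}$ (range $1\le i\le n-2$), and $e_{1}\mathcal{E}_{\omega}$ (range $0\le i\le n-3$). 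For an interior index $1\le i\le n-3$ all three series $\mathcal{A}^{1}_{\omega},\mathcal{C}_{\omega},\mathcal{E}_{\omega}$ contribute, and after factoring out $\tfrac{1}{2}$ the combined coefficient collapses through the algebraic identity
\[
\frac{n(i+1)}{n-1}-\frac{in}{n-1}-\frac{n^{2}}{n-1}=-n,
\]
producing the uniform coefficient $-\tfrac{n}{2}$ of the statement. I would then treat the endpoints separately: at $i=0$ the series $\mathcal{C}_{\omega}$ is absent, which leaves the isolated $\omega^{n-2}$ term, while at $i=n-2$ the $\mathcal{A}^{2}_{\omega}$ contribution combines with the top term of $\mathcal{C}_{\omega}$ via $-\frac{(n-2)n}{2(n-1)}-\frac{n}{2(n-1)}=-\frac{n}{2}$. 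Verifying that each edge case again yields $-\tfrac{n}{2}$ and appending the conjugate terms gives exactly the asserted formula.

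The main obstacle is purely organisational bookkeeping: the four relevant functionals carry \emph{different} summation ranges, so the alignment at $i=0$ and $i=n-2$ must be handled by hand, and one must keep the background-form term $\omega^{n-2}$ (which appears at $i=0$) carefully distinct from the perturbed term $\omega^{n-2}_{\varphi}$ (which appears at $i=n-2$ and is exactly what $\mathcal{A}^{2}_{\omega}$ supplies). Since several of these series are empty or overlap when $n$ is small, the interior range $1\le i\le n-3$ is vacuous for $n=3$; I would therefore confirm the case $n=3$ by direct substitution and then establish the general pattern for $n\ge 4$, where the endpoint combinatorics stabilise. Once every coefficient is checked to equal $-\tfrac{n}{2}$, the holomorphic terms and their conjugates assemble into the formula of Proposition \ref{C.1}.
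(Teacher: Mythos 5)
Your proposal is correct and follows essentially the same route as the paper's Appendix C: substitute the Appendix B constants into (\ref{3.27}), expand $\mathcal{A}^{1}_{\omega},\mathcal{A}^{2}_{\omega},\mathcal{C}_{\omega},\mathcal{E}_{\omega}$ (and conjugates), collapse the interior coefficients via $\frac{n(i+1)}{n-1}-\frac{in}{n-1}-\frac{n^{2}}{n-1}=-n$, handle the $i=0$ and $i=n-2$ endpoints by hand, and treat $n=3$ separately since the interior range is then empty. The only difference is organizational: you invoke conjugation symmetry to compute just the $\partial\omega\wedge\overline{\partial}\varphi$ half, whereas the paper writes out both halves in parallel; the substance of the argument is identical.
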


Similarly, we have
\begin{eqnarray*}
& & \mathcal{J}^{{\rm AY}}_{\omega}(\varphi)\\
&=&-\mathcal{L}^{{\rm
M}}_{\omega}(\varphi)+\frac{1}{V_{\omega}}\int_{X}\varphi\omega^{n}
\\
&+&\frac{n}{n^{2}-1}\sum^{n-3}_{i=0}\frac{i+1}{2V_{\omega}}\int_{X}\varphi
\omega^{i}_{\varphi}\wedge\omega^{n-2-i}\wedge\sqrt{-1}\partial\omega\wedge\overline{\partial}\varphi
\\
&-&\frac{n}{n+1}\left(n-1+\frac{n}{n-1}\right)\frac{1}{2V_{\omega}}\int_{X}\varphi\omega^{n-2}_{\varphi}
\wedge\sqrt{-1}\partial\omega\wedge\overline{\partial}\varphi \\
&-&\frac{n}{n^{2}-1}\sum^{n-3}_{i=0}\frac{i+1}{2V_{\omega}}\int_{X}\varphi\omega^{i}_{\varphi}
\wedge\omega^{n-2-i}\wedge\sqrt{-1}\overline{\partial}\omega\wedge\partial\varphi
\\
&+&\frac{n}{n+1}\left(n-1+\frac{n}{n-1}\right)\frac{2}{2V_{\omega}}\int_{X}\varphi
\omega^{n-2}_{\varphi}\wedge\sqrt{-1}\overline{\partial}\omega\wedge\partial\varphi
\\
&-&\frac{n}{n^{2}-1}\sum^{n-2}_{i=1}\frac{i}{2V_{\omega}}\int_{X}\varphi\omega^{i}_{\varphi}
\wedge\omega^{n-2-i}\wedge\sqrt{-1}\partial\omega\wedge\overline{\partial}\varphi
\\
&+&\frac{n}{n^{2}-1}\sum^{n-2}_{i=1}\frac{i}{2V_{\omega}}\int_{X}\varphi\omega^{i}_{\varphi}
\wedge\omega^{n-2-i}\wedge\sqrt{-1}\overline{\partial}\omega\wedge\partial\varphi
\\
&-&\frac{n^{3}}{n^{2}-1}\sum^{n-3}_{i=0}\frac{1}{2V_{\omega}}\int_{X}\varphi\omega^{i}_{\varphi}
\wedge\omega^{n-2-i}\wedge\sqrt{-1}\partial\omega\wedge\overline{\partial}\varphi
\\
&+&\frac{n^{3}}{n^{2}-1}\sum^{n-3}_{i=0}\frac{1}{2V_{\omega}}\int_{X}\varphi\omega^{i}_{\varphi}\wedge\omega^{n-2-i}
\wedge\sqrt{-1}\overline{\partial}\omega\wedge\partial\varphi
\end{eqnarray*}
When $n=3$, we have
\begin{eqnarray*}
& &\mathcal{J}^{{\rm AY}}_{\omega}(\varphi)\\
&=&-\mathcal{L}^{{\rm
M}}_{\omega}(\varphi)+\frac{1}{V_{\omega}}\int_{X}\varphi\omega^{3}
\\
&+&\frac{3}{8}\frac{1}{2V_{\omega}}\int_{X}\varphi\omega\wedge\sqrt{-1}\partial\omega\wedge\overline{\partial}\varphi
-\frac{3}{4}\left(2+\frac{3}{2}\right)\frac{1}{2V_{\omega}}\int_{X}\varphi\omega_{\varphi}
\wedge\sqrt{-1}\partial\omega\wedge\overline{\partial}\varphi \\
&-&\frac{3}{8}\frac{1}{2V_{\omega}}\int_{X}\varphi\omega\wedge\sqrt{-1}\overline{\partial}\omega
\wedge\partial\varphi+\frac{3}{4}\left(2+\frac{3}{2}\right)\frac{1}{2V_{\omega}}\int_{X}\varphi
\omega_{\varphi}\wedge\sqrt{-1}\overline{\partial}\omega\wedge\partial\varphi
\\
&-&\frac{3}{8}\frac{1}{2V_{\omega}}\int_{X}\varphi\omega_{\varphi}\wedge\sqrt{-1}
\partial\omega\wedge\overline{\partial}\varphi+\frac{3}{8}\frac{1}{2V_{\omega}}\int_{X}
\varphi\omega_{\varphi}\wedge\sqrt{-1}\overline{\partial}\omega\wedge\partial\varphi
\\
&-&\frac{27}{8}\frac{1}{2V_{\omega}}\int_{X}\varphi\omega\wedge\sqrt{-1}\partial\omega\wedge
\overline{\partial}\varphi+\frac{27}{8}\frac{1}{2V_{\omega}}\int_{X}\varphi\omega\wedge
\sqrt{-1}\overline{\partial}\omega\wedge\partial\varphi \\
&=&-\mathcal{L}^{{\rm
M}}_{\omega}(\varphi)+\frac{1}{V_{\omega}}\int_{X}\varphi\omega^{3}
\\
&-&\frac{3}{2V_{\omega}}\int_{X}\varphi\omega\wedge\sqrt{-1}\partial\omega\wedge
\overline{\partial}\varphi+\frac{3}{2V_{\omega}}\int_{X}\varphi\omega\wedge\sqrt{-1}
\overline{\partial}\omega\wedge\partial\varphi \\
&-&\frac{3}{2V_{\omega}}\int_{X}\varphi\omega_{\varphi}\wedge\sqrt{-1}\partial\omega\wedge\overline{\partial}\varphi
+\frac{3}{2V_{\omega}}\int_{X}\varphi\omega_{\varphi}\wedge\sqrt{-1}\overline{\partial}\omega\wedge\partial\varphi
\end{eqnarray*}
When $n\geq4$, we have
\begin{eqnarray*}
& &\mathcal{J}^{{\rm AY}}_{\omega}(\varphi) \ \ = \ \
-\mathcal{L}^{{\rm
M}}_{\omega}(\varphi)+\frac{1}{V_{\omega}}\int_{X}\varphi\omega^{n}
\\
&+&\frac{1}{2V_{\omega}}\sum^{n-3}_{i=1}\left[\frac{n(i+1)}{n^{2}-1}-\frac{in}{n^{2}-1}-\frac{n^{3}}{n^{2}-1}\right]
\int_{X}\varphi\omega^{i}_{\varphi}\wedge\omega^{n-2-i}\wedge\sqrt{-1}\partial\omega\wedge\overline{\partial}\varphi
\\
&+&\frac{n}{n^{2}-1}\frac{1}{2V_{\omega}}\int_{X}\varphi\omega^{n-2}\wedge\sqrt{-1}\partial\omega\wedge\overline{\partial}\varphi
\\
&-&\frac{n}{n+1}\left(n-1+\frac{n}{n-1}\right)\frac{1}{2V_{\omega}}\int_{Z}\varphi
\omega^{n-2}_{\varphi}\wedge\sqrt{-1}\partial\omega\wedge\overline{\partial}\varphi\\
&+&\frac{1}{2V_{\omega}}\sum^{n-3}_{i=1}\left[-\frac{n(i+1)}{n^{2}-1}+\frac{in}{n^{2}-1}+\frac{n^{3}}{n^{2}-1}\right]
\int_{X}\varphi\omega^{i}_{\varphi}\wedge\omega^{n-2-i}\wedge\sqrt{-1}\overline{\partial}\omega\wedge\partial\varphi
\\
&-&\frac{n}{n^{2}-1}\frac{1}{2V_{\omega}}\int_{X}\varphi\omega^{n-2}\wedge\sqrt{-1}\overline{\partial}\omega\wedge\partial\varphi
\\
&+&\frac{n}{n+1}\left(n-1+\frac{n}{n-1}\right)\frac{1}{2V_{\omega}}\int_{X}\varphi\omega^{n-2}_{\varphi}\wedge\sqrt{-1}\overline{\partial}\omega\wedge\partial\varphi
\\
&-&\frac{n(n-2)}{(n^{2}-1)2V_{\omega}}\int_{Z}\varphi\omega^{n-2}_{\varphi}\wedge\sqrt{-1}\partial\omega\wedge\overline{\partial}\varphi
+\frac{n(n-2)}{(n^{2}-1)2V_{\omega}}\int_{X}\varphi\omega^{n-2}_{\varphi}\wedge\sqrt{-1}\overline{\partial}\omega\wedge\partial\varphi
\\
&-&\frac{n^{3}}{n^{2}-1}\frac{1}{2V_{\omega}}\int_{X}\varphi\omega^{n-2}\wedge\sqrt{-1}\partial\omega\wedge\overline{\partial}\varphi
+\frac{n^{3}}{n^{2}-1}\frac{1}{2V_{\omega}}\int_{X}\varphi\omega^{n-2}\wedge\sqrt{-1}\overline{\partial}\omega\wedge\partial\varphi
\end{eqnarray*}
Using the identities
\begin{eqnarray*}
\frac{n(i+1)}{n^{2}-1}-\frac{in}{n^{2}-1}-\frac{n^{3}}{n^{2}-1}&=&\frac{n-n^{3}}{n^{2}-1}
\ \ =-n, \\
-\frac{n}{n+1}\left(n-1+\frac{n}{n-1}\right)-\frac{n(n-2)}{n^{2}-1}&=&
\frac{-n[(n-1)^{2}+n]-n(n-2)}{n^{2}-1} \ \ = \ \ -n,
\end{eqnarray*}
the above expression can be simplified as
\begin{eqnarray*}
& & \mathcal{J}^{{\rm AY}}_{\omega}(\varphi) \ \ = \ \
-\mathcal{L}^{{\rm
M}}_{\omega}(\varphi)+\frac{1}{V_{\omega}}\int_{X}\varphi\omega^{n}
\\
&-&\frac{n}{2V_{\omega}}\sum^{n-2}_{i=1}\int_{X}\varphi\omega^{i}_{\varphi}\wedge\omega^{n-2-i}
\wedge\sqrt{-1}\partial\omega\wedge\overline{\partial}\varphi-\frac{n}{2V_{\omega}}\int_{X}\varphi\omega^{n-2}\wedge\sqrt{-1}\partial\omega\wedge\overline{\partial}\varphi\nonumber\\
&+&\frac{n}{2V_{\omega}}\sum^{n-2}_{i=1}\int_{X}\varphi\omega^{i}_{\varphi}\wedge\omega^{n-2-i}
\wedge\sqrt{-1}\overline{\partial}\omega\wedge\partial\varphi+\frac{n}{2V_{\omega}}\int_{X}\varphi\omega^{n-2}\wedge\sqrt{-1}\overline{\partial}\omega\wedge\partial\varphi.\nonumber
\end{eqnarray*}

In summary,

\begin{proposition} \label{C.2} If $n\geq3$, one has
\begin{eqnarray*}
& & \mathcal{J}^{{\rm AY}}_{\omega}(\varphi) \ \ = \ \
-\mathcal{L}^{{\rm
M}}_{\omega}(\varphi)+\frac{1}{V_{\omega}}\int_{X}\varphi\omega^{n}
\\
&-&\frac{n}{2V_{\omega}}\sum^{n-2}_{i=1}\int_{X}\varphi\omega^{i}_{\varphi}\wedge\omega^{n-2-i}
\wedge\sqrt{-1}\partial\omega\wedge\overline{\partial}\varphi-\frac{n}{2V_{\omega}}\int_{X}\varphi\omega^{n-2}\wedge\sqrt{-1}\partial\omega\wedge\overline{\partial}\varphi\nonumber\\
&+&\frac{n}{2V_{\omega}}\sum^{n-2}_{i=1}\int_{X}\varphi\omega^{i}_{\varphi}\wedge\omega^{n-2-i}
\wedge\sqrt{-1}\overline{\partial}\omega\wedge\partial\varphi+\frac{n}{2V_{\omega}}\int_{X}\varphi\omega^{n-2}\wedge\sqrt{-1}\overline{\partial}\omega\wedge\partial\varphi.\nonumber
\end{eqnarray*}
\end{proposition}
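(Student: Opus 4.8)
The plan is to establish the formula by expanding the defining expression (\ref{3.28}) of $\mathcal{J}^{{\rm AY}}_{\omega}(\varphi)$ as a linear combination of the functionals $\mathcal{A}^{1}_{\omega},\mathcal{A}^{2}_{\omega},\mathcal{B}^{1}_{\omega},\mathcal{B}^{2}_{\omega},\mathcal{C}_{\omega},\mathcal{D}_{\omega},\mathcal{E}_{\omega},\mathcal{F}_{\omega}$, substituting the constants $a^{i}_{2},b^{i}_{2},c_{2},d_{2},e_{2},f_{2}$ recorded in (\ref{3.39})--(\ref{3.42}) (solved for in Appendix B), and then collecting the coefficient of each elementary integral. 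First I would observe that the two terms $-\mathcal{L}^{{\rm M}}_{\omega}(\varphi)$ and $\frac{1}{V_{\omega}}\int_{X}\varphi\omega^{n}$ are carried through unchanged, so the whole content of the proposition reduces to showing that the $\sqrt{-1}\partial\omega$- and $\sqrt{-1}\overline{\partial}\omega$-type contributions of the remaining eight auxiliary functionals assemble into the stated sums with common coefficient $\mp\frac{n}{2V_{\omega}}$.

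Concretely, I would substitute the explicit definitions (\ref{3.13}), (\ref{3.18}), (\ref{3.19}), (\ref{3.20}) and tabulate, for each power $i$ of $\omega_{\varphi}$, the total coefficient multiplying $\frac{1}{2V_{\omega}}\int_{X}\varphi\omega^{i}_{\varphi}\wedge\omega^{n-2-i}\wedge\sqrt{-1}\partial\omega\wedge\overline{\partial}\varphi$. The bookkeeping splits into three cases dictated by the differing summation ranges: the interior indices $1\le i\le n-3$ (nonempty only for $n\ge 4$) receive contributions from $\mathcal{A}^{1}_{\omega}$, $\mathcal{C}_{\omega}$ and $\mathcal{E}_{\omega}$; the top index $i=n-2$, carrying $\omega^{n-2}_{\varphi}$, receives contributions from $\mathcal{A}^{2}_{\omega}$ and the top term of $\mathcal{C}_{\omega}$; and the index $i=0$, carrying the pure power $\omega^{n-2}$, receives contributions from the lowest terms of $\mathcal{A}^{1}_{\omega}$ and $\mathcal{E}_{\omega}$. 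The $\sqrt{-1}\overline{\partial}\omega\wedge\partial\varphi$ terms are handled identically using $\mathcal{B}^{1}_{\omega},\mathcal{B}^{2}_{\omega},\mathcal{D}_{\omega},\mathcal{F}_{\omega}$ together with $b^{i}_{2}=a^{i}_{2}$, $d_{2}=c_{2}$, $f_{2}=e_{2}$, or simply by taking complex conjugates.

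The real engine of the simplification is the pair of elementary algebraic identities
\begin{equation*}
\frac{n(i+1)}{n^{2}-1}-\frac{in}{n^{2}-1}-\frac{n^{3}}{n^{2}-1}=-n, \qquad -\frac{n}{n+1}\left(n-1+\frac{n}{n-1}\right)-\frac{n(n-2)}{n^{2}-1}=-n.
\end{equation*}
The first collapses every interior coefficient, as well as the $i=0$ coefficient (where the middle term is absent), to the common value $-n$; the second reconciles the two boundary contributions to $\omega^{n-2}_{\varphi}$ to the same value $-n$. Multiplying by $\frac{1}{2V_{\omega}}$ then shows that the coefficient of $\int_{X}\varphi\omega^{i}_{\varphi}\wedge\omega^{n-2-i}\wedge\sqrt{-1}\partial\omega\wedge\overline{\partial}\varphi$ equals $-\frac{n}{2V_{\omega}}$ uniformly for $0\le i\le n-2$, and dually $+\frac{n}{2V_{\omega}}$ for the conjugate integrals, which is precisely the asserted formula, the $i=0$ contribution appearing as the separate pure-$\omega^{n-2}$ term.

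The main obstacle is not conceptual but organizational: one must keep the four distinct summation ranges aligned and isolate the two boundary terms correctly, since they draw on different subsets of the auxiliary functionals. I would address this by recording each functional's contribution index by index before summing, and I would verify the degenerate case $n=3$ separately, where the interior range $1\le i\le n-3$ is empty so that only the two boundary terms survive; the resulting expression should then be checked to coincide with the three-fold formula of \cite{L2}.
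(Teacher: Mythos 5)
Your proposal is correct and takes essentially the same route as the paper's own proof in Appendix C: substitute the constants (\ref{3.39})--(\ref{3.42}) into the definition (\ref{3.28}), collect the coefficient of each integral $\int_{X}\varphi\omega^{i}_{\varphi}\wedge\omega^{n-2-i}\wedge\sqrt{-1}\partial\omega\wedge\overline{\partial}\varphi$ over the ranges $1\leq i\leq n-3$, $i=0$, and $i=n-2$, collapse them all to $-n$ using exactly the two algebraic identities you state (with the conjugate terms handled symmetrically), and treat $n=3$ separately since the interior range is then empty. No gaps; nothing further is needed.
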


\bibliographystyle{amsplain}

\end{document}